\renewcommand\thesection{\arabic{section}}
\renewcommand\thesubsection{\arabic{subsection}}
\renewcommand\proofname{Proof}
\renewenvironment{proof}[1][\textit{\proofname}]{\par
 \pushQED{\qed}%
 \normalfont \topsep.75\paraskip\relax
 \trivlist
 \item[\hskip\labelsep
 \itshape
 #1\@addpunct{.}]\ignorespaces
}{%
 \popQED\endtrivlist\@endpefalse
}
\declaretheoremstyle[
bodyfont=\itshape,]{mystyle}
\declaretheorem[name=Lemma, style=mystyle, numberwithin=section]{Lemma}
\declaretheorem[name=Proposition, style=mystyle, sibling=Lemma]{Proposition}
\declaretheorem[name=Theorem, style=mystyle, sibling=Lemma]{Theorem}
\declaretheorem[name=Definition, style=mystyle, sibling=Lemma]{Definition}
\declaretheoremstyle[numbered=no, 
bodyfont=\itshape]{mystyle-empty}
\declaretheorem[name=Lemma, style=mystyle-empty]{Lemma*}
\declaretheorem[name=Proposition, style=mystyle-empty]{Proposition*}
\declaretheorem[name=Theorem, style=mystyle-empty]{Theorem*}
\declaretheorem[name=Corollary, style=mystyle-empty]{Corollary*}
\declaretheorem[name=Definition, style=mystyle-empty]{Definition*}
\declaretheorem[name=Example, style=mystyle-empty]{Example*}
\declaretheorem[name=Remark, style=mystyle-empty]{Remark*}
\newskip\paraskip
\newcounter{para}[section]
\renewcommand\thepara{\thesection.\arabic{para}}
\def\paragraph{%
 \noindent
 \refstepcounter{para}%
 \textbf{\thepara.}\hspace{1ex}%
}
\newcommand\about[1]{%
 {\bfseries#1.}%
}
\newcommand\NN{\mathbb N}
\newcommand\CC{\mathbb C}
\newcommand\RR{\mathbb R}
\newcommand\ZZ{\mathbb Z}
\newcommand\II{\mathbb I}
\newcommand\ot{\otimes}
\renewcommand\to{\longrightarrow}
\renewcommand\phi{\varphi}
\newcommand\Id{\mathsf{Id}}
\newcommand\A{\mathcal A}
\newcommand\D[3]{{}^{#1} \mathfrak D_{#2}^{#3}}
\newcommand\DD[3]{{}^{#1} \mathcal D_{#2}^{#3}}
\newcommand\Z{\mathsf Z}
\newcommand\m{\mathfrak m}
\newcommand\gl{\mathfrak{gl}}
\renewcommand\SS{\mathfrak S}
\newcommand\vv{\overline{v}}
\newcommand\vectspan[1]{\left\langle #1 \right\rangle}
\DeclareMathOperator\Frac{Frac}
\DeclareMathOperator\Specm{Specm}
\DeclareMathOperator\End{End}
\DeclareMathOperator\Hom{Hom}
\DeclareMathOperator\sym{sym}
\DeclareMathOperator\sg{sg}
\DeclareMathOperator\ev{\mathsf{ev}}
\DeclareMathOperator\ann{ann}
\DeclareMathOperator\supp{supp}
\begin{document}
\title{Gelfand-Tsetlin Theory for Rational Galois Algebras}
\date{}

\author[V.Futorny]{Vyacheslav Futorny}
\address{Instituto de Matem\'atica e Estat\'istica, Universidade de S\~ao
Paulo, S\~ao Paulo SP, Brasil} \email{futorny@ime.usp.br,}
\author[D.Gratcharov]{Dimitar Grantcharov}
\address{\noindent
University of Texas at Arlington, Arlington, TX 76019, USA} \email{grandim@uta.edu}
\author[L.E.Ramirez]{Luis Enrique Ramirez}
\address{Universidade Federal do ABC, Santo Andr\'e-SP, Brasil} \email{luis.enrique@ufabc.edu.br,}
\author[P.Zadunaisky]{Pablo Zadunaisky}
\address{Instituto de Matem\'atica e Estat\'istica, Universidade de S\~ao
Paulo, S\~ao Paulo SP, Brasil} \email{pzadun@ime.usp.br}

\begin{abstract} 
In the present paper we study Gelfand-Tsetlin modules defined in terms of BGG differential operators. The structure of these modules is described with the aid of the Postnikov-Stanley polynomials introduced in \cite{PS-chains-bruhat}. These polynomials are used to identify the  action of the Gelfand-Tsetlin subalgebra on the BGG operators. We also provide explicit bases of the corresponding Gelfand-Tsetlin modules and prove a simplicity criterion for these modules.  The results hold for modules  defined over standard Galois orders of type $A$ - a large class of rings  that include the universal enveloping algebra of $\mathfrak{gl} (n)$ and the finite $W$-algebras of type $A$.
\end{abstract}
\maketitle

\noindent\textbf{MSC 2010 Classification:} 16G99, 17B10.\\
\noindent\textbf{Keywords:} Gelfand-Tsetlin modules, Gelfand-Tsetlin bases, 
reflection groups, Schubert polynomials, Littlewood-Richardson coefficients.

\section{Introduction}

The category of Gelfand-Tsetlin modules of the general linear Lie algebra 
$\mathfrak{gl} (n)$ is an important category of modules that plays a prominent 
role in many areas of mathematics and theoretical physics. By definition, a 
Gelfand-Tsetlin module of $\mathfrak{gl} (n)$ is one that has a generalized 
eigenspace decomposition over a certain maximal commutative subalgebra 
(\emph{Gelfand-Tsetlin subalgebra}) $\Gamma$ of the universal enveloping 
algebra of $\mathfrak{gl} (n)$. This algebraic definition has a nice 
combinatorial flavor. The concept of a Gelfand-Tsetlin module generalizes the 
classical realization of the simple finite-dimensional representations of 
$\mathfrak{gl} (n)$ via the so-called Gelfand-Tsetlin tableaux introduced in 
\cite{GT}. The explicit nature of the Gelfand-Tsetlin formulas inevitably 
raises the question of what infinite-dimensional modules admit tableaux bases 
- a question that led to the systematic study of  the theory of 
Gelfand-Tsetlin modules. This theory has attracted considerable attention in 
the last 30 years of the 20th century and have been studied in \cites{
DFO-GT-modules-original, DFO-GT-modules, Maz1, Maz2, m:gtsb, Zh}, among 
others. Gelfand-Tsetlin bases and modules are also related to  Gelfand-Tsetlin 
integrable systems that were first introduced for the unitary Lie algebra 
${\mathfrak u}(n)$ by Guillemin and Sternberg in \cite{GS}, and later for the 
general linear Lie algebra $\mathfrak{gl}(n)$ by Kostant and Wallach in 
\cite{KW-1} and \cite{KW-2}.

Recently, the study of Gelfand-Tsetlin modules took a new direction after the theory of singular Gelfand-Tsetlin modules was initiated in \cite{FGR-1-singular}. Singular Gelfand-Tsetlin modules are roughly those that have basis of tableaux whose  entries may be zeros of the denominators in the Gelfand-Tsetlin formulas. For the last three years remarkable progress has been made towards the study of singular Gelfand-Tsetlin modules of $\mathfrak{gl}(n)$. Important results in this direction were obtained in \cites{FGR-generic-irreducible, FGR-1-singular, FGR-2-index, Zad-1-sing, 
Vis-geometric-1-singular-GT, Vis-geometric-singular-GT, 
RZ-singular-characters}. In particular, explicit constructions of a Gelfand-Tsetlin module with a fixed singular Gelfand-Tsetlin character were obtained with algebro-combinatorial methods in \cite{RZ-singular-characters} and with geometric methods in \cite{Vis-geometric-singular-GT}.  One notable property of these general constructions is their relations with Schubert calculus and reflection groups. As explained below, this relation  is brought to a higher level in the present paper and new connections with Schubert polynomials and generalized Littlewood-Richardson coefficients are established. We hope that these new connections, combined with combinatorial results on skew Schubert polynomials, will help us to bring within a reach the solution of the most important problem in the theory: the classification of all simple Gelfand-Tsetlin modules of $\mathfrak{gl} (n)$.

The study of Gelfand-Tsetlin modules is not limited to the cases of $\mathfrak{gl} (n)$ and $\mathfrak{sl} (n)$.  Gelfand-Tsetlin subalgebras are part of a uniform algebraic theory, the theory of Galois orders. Galois orders are special types of rings that were introduced in 
\cite{FO-galois-orders} in an attempt to unify the  representation theories of 
generalized Weyl algebras and the universal 
enveloping algebra of $\gl (n)$. In addition to 
the universal enveloping algebra of
$\gl (n)$ examples of Galois orders include  the $n$-th Weyl algebra,  the quantum plane,  the Witten-Wo\-ro\-no\-wicz algebra, the $q$-deformed Heisenberg algebra, and finite
$W$-algebras of type $A$ (for details and more examples see for example \cite{Hart-rational-galois}).

The representation theory of Galois orders was initiated in \cite{FO-fibers-gt}. In particular, the following finiteness theorem for Gelfand-Tsetlin modules of a Galois order $U$ over an integral domain $\Gamma$ was proven: given a maximal ideal $\m$ of $\Gamma$ there exists only finitely many non-isomorphic simple Gelfand-Tsetlin modules $M$ such that $M[\m] \neq 0$ (see \S \ref{par-gt-modules} for the definition of $M[\m]$). This theorem generalizes the finiteness theorem for $\gl (n)$ obtained in \cite{Ovs-finiteness}. Other important results of the Gelfand-Tsetlin theory of $\gl (n)$ were extended to  certain types of Galois orders in \cites{EMV-orthogonal, Hart-rational-galois, Maz-orthogonal-GT-alg}. One such important result is the construction of a Gelfand-Tsetlin module with any fixed Gelfand-Tsetlin character over an orthogonal Gelfand-Tsetlin algebra obtained very recently in \cite{EMV-orthogonal}. Another notable contribution is the new framework of rational Galois orders established in \cite{Hart-rational-galois}. Examples of rational Galois orders are the universal enveloping algebra of $\gl (n)$, restricted Yangians of $\gl (n)$, orthogonal Gelfand-Tsetlin algebras, finite $W$-algebras of type $A$, among  others.

The first goal of the present paper is to establish a closer connection of the singular Gelfand-Tstelin theory with the theory of  Schubert polynomials and reflection groups.  We study a new natural class of $\Gamma$-modules that consists of differential operators related to the polynomials introduced  in \cite{BGG-cohomology}. These BGG differential operators have numerous applications in the cohomology theory of flag varieties. In the present paper, we use a particular aspect of these applications - the Postnikov-Stanley operators. Postnikov-Stanley polynomials were originally defined in \cite{PS-chains-bruhat} in order to  express degrees of Schubert varieties in the generalized complex flag manifold $G/B$. The polynomials are given by weighted sums over saturated chains in the Bruhat order and have intimate relations with Schubert polynomials, harmonic polynomials, Demazure characters, and generalized Littlewood-Richardson coefficients. The action of $\Gamma$ on the module of BGG differential operators is described explicitly in terms of Postnikov-Stanley operators. Using this explicit action, we prove one of our main results -  an upper bound for the size of Jordan blocks of the generators of $\Gamma$, see Theorem \ref{thm-Jordan}. The explicitness of the action  helps us also to understand better the structure of the $U$-module consisting of BGG operators and, in particular, is used as a technical tool in the proof of the simplicity criterion for this $U$-module

The other goal of the paper is to deepen the study of Gelfand-Tsetlin  modules of  rational Galois orders. We define a special class of rational Galois orders that we call standard Galois orders of type $A$ that includes most of the examples of rational Galois orders listed above. Then we construct Gelfand-Tsetlin modules of arbitrary character over these Galois orders and provide explicit bases of these modules, see Theorem \ref{T:direct-sum}. Our last main result, Theorem \ref{T:simplicity}, is a sufficient condition for these modules to be simple. This simplicity criterion 
generalizes the criterion for orthogonal Gelfand-Tsetlin algebras obtained in \cite{EMV-orthogonal}. It is worth noting,  that as an immediate corollary, our result provides new examples of  simple modules of any finite $W$-algebra of type $A$.

The organization of the paper is as follows. Preliminary results on reflection groups are collected in Section 2. In Section 3 we include the needed background on BGG differential operators and Postnikov-Stanley differential operators. Definitions and properties of Galois orders and Gelfand-Tsetlin modules are included in Section 4. In Section 5 we discuss generalities on rational Galois orders. The  $\Gamma$-module of BGG operators is defined in Section 6, where we study its structure with the aid of Postnikov-Stanley  operators. In this section we also give an upper bound for the size of a Jordan block of any $\gamma$ of $\Gamma$ considered as an endomorphism of the $\Gamma$-module of BGG differential operators. The $U$-module structure of a (larger) space of BGG differential operators is studied in  Section 7. In Section 8 we provide a basis of the $U$-module defined in Section 7, prove that this module is a Gelfand-Tsetlin module, and establish a simplicity criterion for this module.

We finish the introduction with a few notational conventions, which will be 
used throughout the paper. Unless otherwise stated, the ground field will be 
$\mathbb C$. By $\mathbb N$ we denote the set of positive integer numbers. 
A \emph{reflection group} will always be a finite group isomorphic to a 
subgroup of $\mathsf O(n, \RR)$ for some $n \in \NN$ and generated by reflections.
Given a ring $R$ and a monoid $\mathcal M$ acting on $R$ by ring morphisms,
 by $R \# \mathcal M$ we denote the \emph{smash product} of $R$ and $\mathcal 
M$, i.e. the free $R$-module with basis $\mathcal M$ and product given by
$r_1 m_1 \cdot r_2 m_2 = r_1 m_1(r_2) m_1m_2$ for any $r_1, r_2 \in R$ and
any $m_1, m_2 \in \mathcal M$.

\noindent{\bf Acknowledgements.} V.F. is
supported in part by CNPq grant (301320/2013-6) and by 
Fapesp grant (2014/09310-5). D.G is supported in part by Simons Collaboration 
Grant 358245. P.Z. is supported by Fapesp fellowship (2016-25984-1). 

\section{Preliminaries on reflection groups}
We recall some basic facts and fix notation on root systems and reflection 
groups. Our definition of root system is slightly different from the classical
one, but is easily seen to be equivalent.

\paragraph
\about{Root systems and reflection groups} Let $V$ be a complex vector space 
with a fixed inner product which we denote by $(-,-)$. We use this inner 
product to identify $V$ with its dual $V^*$ and for 
each $\alpha \in V^*$ we denote by $v_\alpha$ the unique element of $V$ such 
that $\alpha(v') = (v', v_\alpha)$ for all $v' \in V$. Given $\alpha \in V^*$ 
we denote by 
$s_\alpha$ the orthogonal reflection through the hyperplane $\ker \alpha$, and 
by $s_\alpha^*$ the corresponding endomorphism of $V^*$. In this article a
\emph{finite root system} over $V$ will be a finite set $\Phi \subset V^*$ 
such that for each $\alpha \in \Phi$ we have 
\begin{itemize}
\item[(R1)] $\Phi \cap \CC \alpha = \{\pm \alpha\}$ and
\item[(R2)] $s_\alpha^*(\Phi) \subset \Phi$.
\end{itemize}
In classical references such as \cite{Hump-coxeter-book} and 
\cite{Hiller-coxeter-book} root systems are defined as subsets of an Euclidian
vector space $V_\RR$ with $\RR$ instead of $\CC$ in $(R1)$. Taking $V = \CC 
\ot_\RR V_\RR$ for an adequate $V_\RR$ our definition is equivalent to theirs. 
We use the definition above since we work with complex vector spaces endowed 
with the action of a reflection group. 

We now review the basic features of the theory of root systems. For more 
details we refer the reader to the two references above. Fix a root system 
$\Phi$. The Weyl group associated to $\Phi$ is the group $W(\Phi)$ generated 
by $\{s_\alpha \mid \alpha \in \Phi\}$. Since we do not assume that the root 
systems are reduced or crystallographic, nor that $\Phi$ generates $V_\RR^*$,  
the group $W(\Phi)$ is a finite reflection group which may be 
decomposable, and its action on $V$ may have a nontrivial stabilizer. Any 
reflection group $G \subset \mathsf{GL}(V)$ is the Weyl group of some root 
system $\Phi \subset V^*$ \cite{Hiller-coxeter-book}*{\S 1.2}. 

Just as in the case of root systems for Lie algebras, for each root system 
$\Phi$ we can choose a linearly independent subset $\Sigma \subset \Phi$ which
is a basis of the $\RR$-span of $\Phi$ such that the coefficients of each root
of $\Phi$ in this basis are either all nonnegative or all nonpositive. Such 
sets are called \emph{bases} or \emph{simple systems}, and its elements are 
called \emph{simple roots}. Each choice of a base defines a partition $\Phi = 
\Phi^+ \cup - \Phi^+$, where $\Phi^+$ is the set of all \emph{positive} roots, 
i.e. those whose coordinates over $\Sigma$ are nonnegative. If we fix a base 
$\Sigma$ then the set $S$ of reflections corresponding to simple roots is a 
minimal generating set of the reflection group $W = W(\Phi)$, and hence $(W,S)$
is a finite Coxeter system in the sense of \cite{Hump-coxeter-book}*{1.9}. 
Each $s \in W$ of order two is of the form $s_\alpha$ for some $\alpha \in 
\Phi^+$ \cite{Hump-coxeter-book}*{Proposition 2.14}, and given $s \in W$ of 
order two we denote by $\alpha_s$ the corresponding positive root.

Fixing a base $\Sigma$, or equivalently, a minimal generating set $S \subset 
W$, we define the \emph{length} $\ell(\sigma)$ of $\sigma \in W$ as the least 
positive integer $\ell$ such that $\sigma$ can be written as a composition of 
$\ell$ reflections in $S$. Any sequence $s_1, \ldots, s_{\ell(\sigma)}$ such 
that $\sigma = s_1 \cdots s_{\ell(\sigma)}$ is called a \emph{reduced 
decomposition}; notice that reduced decompositions are not unique. The group 
$W$ acts faithfully and transitively on 
$\Phi$. Furthermore, $\ell(\sigma) = |\sigma(\Phi^+) \cap -\Phi^+|$, so $W$ 
has a unique longest element whose length equals $|\Phi|$. We will denote this 
element by $\omega_0(W)$, or simply by $\omega_0$ if the group $W$ is clear 
from the context.

For the rest of this section we fix a root system $\Phi$ with base $\Sigma$ 
and denote by $(W,S)$ be the corresponding Coxeter system. 

\paragraph
\about{Subsystems, subgroups and stabilizers} In this subsection we follow
\cite{Hump-coxeter-book}*{1.10}, where the reader can find most proofs.
Given $\Omega \subset \Sigma$ we denote by 
$\Phi(\Omega)$ the root subsystem generated by $\Omega$. We will call such 
subsystems \emph{standard}. If $\Psi \subset \Phi$ is an arbitrary subsystem 
then we can choose a base $\Omega \subset \Psi$ which can be extended to a 
base $\overline \Omega$ of $\Phi$. By \cite{Hump-coxeter-book}*{1.4 Theorem} 
$W$ acts transitively on the set of all bases of $\Phi$, so for some $\sigma 
\in W$ we have $\sigma(\overline \Omega) = \Sigma$ and hence $\sigma(\Psi)$ is 
standard. 

Let $\theta \subset S$ and denote by $W_\theta$ the subgroup of $W$ generated 
by $\theta$. Then $(W_\theta, \theta)$ is also a Coxeter system and it 
determines a standard root system $\Phi_\theta \subset \Phi$ with simple roots 
$\Sigma_\theta = \{\alpha_s \mid s \in \theta\}$. We will refer to subgroups 
of the form $W_\theta$ as \emph{standard parabolic} subgroups. A parabolic 
subgroup is any subgroup of $W$ that is conjugate to a standard parabolic 
subgroup.

If $\sigma \in W_\theta$ then we can compute its length as an element of $W$ 
with respect to the generating set $S$ or as an element of $W_\theta$ with 
respect to the generating set $\theta$. Both lengths turn out to be 
equal and will be denoted by $\ell(\sigma)$. Since $W_\theta$ is also a 
Coxeter group it has a unique element of maximal length which we will denote 
by $\omega_0(\theta)$. The set $W^\theta = \{\sigma \in W \mid \ell(\sigma s) >
\ell(\sigma) \mbox{ for all } s \in \theta\}$ is a set of representatives of 
the classes in the quotient $W/W_\theta$, and for each $\sigma \in W$ there 
exist unique elements $\sigma^\theta \in W^\theta$ and $\sigma_\theta \in 
W_\theta$ such that $\sigma = \sigma^\theta\sigma_\theta$ with $\ell(\sigma) = 
\ell(\sigma^\theta) + \ell(\sigma_\theta)$. The element $\sigma^\theta$ is the 
element of minimal length in the coclass $\sigma W_\theta$. It follows that 
$(\omega_0)_\theta = \omega_0(\theta)$ and therefore $\omega_0^\theta = 
\omega_0 \omega_0(\theta)^{-1}$.

Given $v \in V$ we denote by $\Phi_0(v)$ the set of all roots in $\Phi$ such 
that $\alpha(v) = 0$, which is clearly a root subsystem of $\Phi$. We also 
denote by $W_v$ the stabilizer of $v$ in $W$. We will say that $v$ is 
\emph{$\Sigma$-standard}, or just \emph{standard} when $\Sigma$ is fixed or 
clear from the context, if $\Phi_0(v)$ is a $\Sigma$-standard subsystem of 
$\Phi$. It is easy to check that $v$ is standard if and only if $W_v$ is a 
standard parabolic subgroup, and $W =W(\Phi_0(v))$. Since $W_{\sigma(v)} = 
\sigma W_v \sigma^{-1}$ and $\Phi_0(\sigma(v)) = \sigma(\Phi_0(v))$ for all 
$\sigma \in W$, it follows that for every $v \in V$ there exists $\sigma \in 
W$ such that $\sigma(v)$ is standard and hence $W_{\sigma(v)}$ is a standard 
parabolic subgroup. If $v$ is standard then we denote by $W^v$ the set of 
minimal length representatives of the left coclasses $W/W_v$.

\section{Divided differences and Postnikov-Stanley operators}

In this section $V$ is a fixed complex vector space, $\Lambda = S(V)$, and $L$ is 
the fraction field of $\Lambda$.  Note that following the convention of 
\cite{PS-chains-bruhat}, we  write $S(V)$ for $Sym(V^*)$. Also, we fix a finite root system $\Phi$ with 
base $\Sigma$, and set $W = W(\Phi)$ to be the corresponding reflection group 
with minimal generating set $S$. Thus $W$ acts on $\Lambda$ and $L$, and we 
set $\Gamma = \Lambda^W$ and $K = L^W$.

\paragraph
\about{Divided differences}
Since $W$ acts on $L$ we can form the smash product $L \# W$. Recall that the 
product in this complex algebra is given over generators by $f \sigma \cdot g 
\tau = f \sigma(g) \sigma \tau$ for all $f, g \in L$ and all $\sigma, \tau 
\in W$. Dedekind's theorem on linear independence of field homomorphisms 
implies that the algebra morhpism $L \# W \hookrightarrow \End_\CC(L)$ defined 
by mapping $l \sigma \in L \# W$ to the endomorphism $f \mapsto l\sigma(f)$ is
an embedding. We identify $L\# W$ with its image, and so must be careful to 
distinguish the result of applying the endomorphism $l \sigma$ to $f$, whose
result is $l\sigma(f)$, and the product of $l \sigma$ and $f$ in $L \# W$, 
which is $l \sigma \cdot f = l\sigma(f) \sigma$. 

For $s\in W$ we set
\begin{align*}
\nabla_s 
	&= \frac{1}{\alpha_s}(1-s) \in L \# W.
\end{align*}
It is easy to show that for each $f, g \in L$,
\begin{align*}
\nabla_s(fg) = \nabla_s(f) g + s(f)\nabla_s(g)
\end{align*}
so $\nabla_s$ is a twisted 
derivation of $L$. Notice that $\ker \nabla_s$ is exactly $L^{\vectspan s}$ 
and so $\nabla_s$ is $L^{\vectspan s}$-linear. Also it follows from the
definition that $\nabla_s(\Lambda) \subset \Lambda$. 

\begin{Example*}
Suppose $V = \CC^2$ and let $\{x,y\} \subset (\CC^2)^*$ be the dual basis
to the canonical basis. Let $s$ be the reflection given by $s(z_1, z_2) = (z_2,
z_1)$, so $\alpha_s = x -y$. Then for each $f(x,y) \in \CC[x,y]$ we have
$\nabla_s (f)(x,y) = \frac{f(x,y) - f(y,x)}{x-y}$. Notice that this quotient is
always a polynomial, since $f(x,y) - f(y,x)$ is an antisymmetric polynomial and
hence divisible by $x-y$.
\end{Example*}

Given $\sigma \in W$ we take a reduced decomposition $\sigma = s_1 \cdots 
s_{\ell}$ and set $\partial_\sigma = \nabla_{s_1} \circ \cdots \circ 
\nabla_{s_\ell}$; this element is called the \emph{divided difference} 
corresponding to $\sigma$ and does not depend on the chosen reduced 
decomposition \cite{Hiller-coxeter-book}*{Chapter IV (1.6)}. Notice though that
the definition of $\partial_\sigma$ does depend on the choice of a base 
$\Sigma \subset \Phi$. 

By definition, an $L\# W$-module $Z$ is an $L$-vector space 
endowed with a $W$-module structure such that the action of $L$ 
on $Z$ is $W$-equivariant. A simple induction on the length of $\sigma$ shows 
that the divided difference $\partial_\sigma$ defines a $K$-linear map over 
any $L\# W$-module $Z$. In particular $L$ is such a module, and since 
$\nabla_s(\Lambda) \subset \Lambda$ for any $s \in S$, it follows that 
$\Lambda$ is closed under the action of divided differences.

\paragraph
\about{Coinvariant spaces and Schubert polynomials}
The algebra $\Lambda$ is  $\ZZ_{\ge 0}$-graded with $\Lambda_1 = V^*$
and $\Gamma$ is a graded subalgebra of $\Lambda$ . We denote by $I_W$ the 
ideal of $\Lambda$ generated by the elements of $\Gamma$ of positive degree. 
By the Chevalley-Shephard-Todd theorem $\Gamma$ is isomorphic to a polynomial 
algebra in $\dim V$ variables and $\Lambda$ is a free $\Gamma$-module of rank 
$|W|$. Also, a set $B \subset \Lambda$ is a basis of the $\Gamma$-module 
$\Lambda$ if and only if its image in the quotient $\Lambda/I_W$ is a 
$\CC$-basis. Furthermore, $\Lambda/I_W$ is naturally a graded $W$-module 
isomorphic to the regular representation of $W$ with Hilbert series 
$\sum_{\sigma \in W} t^{\ell(\sigma)}$. For proofs we refer the reader to 
\cite{Hiller-coxeter-book}*{Chapter II, Section 3}.

We now recall the construction of the basis of Schubert polynomials of $\Lambda
/I_W$. This construction is due to Bernstein, Gelfand and Gelfand 
\cite{BGG-cohomology} and Demazure \cite{Dem-schubert} in the case when $W$ is 
a Weyl group, and to Hiller \cite{Hiller-coxeter-book}*{Chapter IV} in the 
case of arbitrary Coxeter groups. Set $\Delta(\Phi) = 
\prod_{\alpha \in \Phi^+} \alpha$, and for each 
$\sigma \in W$ set $\SS_\sigma^\Sigma = \frac{1}{|W|} \partial_{\sigma^{-1}
\omega_0} \Delta(\Phi)$. We will often write $\SS_\sigma$ instead of 
$\SS_\sigma^\Sigma$ when the base $\Sigma$ is clear from the context. Notice 
that by definition $\deg \SS_\sigma 
= \ell(\sigma)$. The polynomials $\{\SS_\sigma \mid \sigma \in W\}$ are known
as \emph{Schubert polynomials}, and they form a basis of 
$\Lambda$ as a $\Gamma$-module, so the projection of this set is a basis 
of $\Lambda/I_W$ as a complex vector space. Since $K = L^W$ we know that $L$ 
is a $K$-vector space of dimension $|W|$ and so $\{\SS_\sigma \mid \sigma \in 
W\}$ is also a basis of $L$ over $K$. Given $f \in L$ we will denote by 
$f_{(\sigma)}$ the coefficient of $\SS_\sigma$ in the expansion of $f$ 
relative to this basis, so $f = \sum_{\sigma \in W} f_{(\sigma)} \SS_\sigma$. 

Since Schubert polynomials form a basis of $\Lambda / I_W$, for all $\sigma,
\tau, \rho \in W$ there exists $c_{\sigma, \tau}^\rho \in \CC$ defined 
implicitly by the equation
\begin{align*}
\SS_\sigma \SS_\tau 
	&= \sum_{\rho \in W} c^\rho_{\sigma, \tau} \SS_\rho \mod I_W.
\end{align*}
The coefficients $c_{\sigma, \tau}^\rho$ are the \emph{generalized 
Littlewood-Richardson coefficients} relative to the base $\Sigma$. It follows 
from the definition that $c^\rho_{\sigma, \tau} = 0$ unless $\ell(\sigma) + 
\ell(\tau) = \ell(\rho)$. If $\theta \subset S$ then the space of 
$W_\theta$-invariants $(\Lambda/I_W)^{W_\theta}$ is generated by the set 
$\{\SS_\sigma \mid \sigma \in W^\theta\}$ 
\cite{Hiller-coxeter-book}*{Chapter IV (4.4)}. In particular, if $\sigma, 
\tau \in W^\theta$ then $c^{\rho}_{\sigma, \tau} \neq 0$ implies that 
$\rho \in W^\theta$. 

\paragraph
\about{Postnikov-Stanley operators}
\label{ps-operators}
Given $\alpha \in V^*$ there is a unique $\CC$-linear derivation 
$\Theta(\alpha): \Lambda \to \Lambda$ such that $\Theta(\alpha)(\beta) = 
(\beta, \alpha)$ for each $\beta \in V^*$. This map extends uniquely to a 
morphism $\Theta: \Lambda \to \mathsf{Der}_\CC(\Lambda)$. If we fix an 
orthonormal basis $x_1, \ldots, x_n$ of $V^*$, then $S(V) \cong \CC[x_1, 
\ldots, x_n]$ and $\Theta(x_i) = \frac{\partial}{\partial x_i}$.

Let $(-,-)_\Theta: \Lambda \times \Lambda \to \CC$ be the bilinear form given 
by $(f,g) = \Theta(f)(g)(0)$. This is a nondegenerate bilinear form which can 
be used to identify $\Lambda$ with its graded dual $\Lambda^\circ$. For every graded ideal $I 
\subset \Lambda$ we write $\mathcal H_I = \{g \in \Lambda \mid (f,g)_\Theta = 0
\mbox{ for all } f \in I\}$. Since the pairing $(-,-)_\Theta$ is nodegenerate, 
the space $\mathcal H_I$ is naturally isomorphic to the graded dual 
$(\Lambda/I)^\circ$. We denote by $P_\sigma^\Sigma$ the unique element in 
$\mathcal H_{I_W}$ such that $(P_\sigma^\Sigma, \SS_{\tau}^\Sigma) = 
\delta_{\sigma, \tau}$ for all $\sigma, \tau \in W$. Like before, we usually 
write $P_\sigma$ instead of $P_\sigma^\Sigma$. It follows that the set 
$\{P_\sigma \mid \sigma \in W\}$ is a graded basis of $\mathcal H_{I_W}$, dual 
to the Demazure basis of $\Lambda/I_W$. Also for each $\theta \subset S$ the 
set $\{P_\sigma \mid \sigma \in W^\theta\}$ is a graded basis of the dual of 
$(\Lambda/I_W)^{W_\theta}$. Notice that both these families are bases of
the space of $W$-harmonic polynomials, i.e. those polynomials which are 
annihilated by $W$-symmetric differential operators.

Recall that $\sigma$ \emph{covers} $\tau$, and denote this by $\tau
\preceq \sigma$, if $\sigma = \tau s_\alpha$ for some $\alpha \in \Phi$ and 
$\ell(\sigma) = \ell(\tau) + 1$. The \emph{Bruhat order} of $W$ is the 
transitive closure of this relation. A \emph{saturated chain} from $\sigma$ to 
$\tau$ in the Bruhat order is a sequence $\sigma = \sigma_0 \preceq \sigma_1 
\preceq \cdots \preceq \sigma_r = \tau$, and we refer to $r$ as the length of 
the saturated chain. The polynomials $P_\sigma$ were described by Postnikov 
and Stanley in terms of saturated chains in the Bruhat order of $W$ in 
\cite{PS-chains-bruhat} when $W$ is a Weyl group. 

For each covering relation $\sigma \preceq \sigma s_\alpha$ with 
$\alpha \in \Phi^+$ we set $m(\sigma,\sigma s_\alpha) = \alpha \in V^* = 
S(V)_1$, and for a saturated chain $C = (\sigma_1, \sigma_2, \ldots, 
\sigma_r)$ we denote by $m_C$ the product $\prod_{i=1}^{r-1} 
m_C(\sigma_i,\sigma_{i+1})$. Set
\begin{align*}
P_{\sigma, \tau} &= \frac{1}{(\ell(\tau) - \ell(\sigma))!}\sum_C m_C
\end{align*}
where the sum is taken over all saturated chains from $\sigma$ to $\tau$. 
Now, according to \cite{PS-chains-bruhat}*{Corollary 6.9}, if $\sigma \leq 
\tau$ in the Bruhat order then $P_{\sigma,\tau} = \sum_{\rho \in W} 
c^{\tau}_{\sigma,\rho} P_{\rho}$. This identity inspires the following 
definition.
\begin{Definition}
For each $\sigma, \tau \in W$ with $\tau \leq \sigma$ in the Bruhat order of 
$W$ we set $\D{\Sigma}{\sigma}{} = \Theta(P_\sigma^\Sigma)$ and 
$\D{\Sigma}{\tau,\sigma}{} = \sum_{\rho \in W} c^\sigma_{\tau, \rho} 
\D{\Sigma}{\rho}{}$. We ommit the superscript $\Sigma$ whenever it is clear 
from the context.
\end{Definition}

\paragraph
Notice that although by definition $\D{\Sigma}{\tau, \sigma}{}$ is a 
differential operator on $\Lambda$, it has a well defined extension to the 
fraction field $L$, and we will denote this extenssion by the same symbol. We 
denote by $\D{}{\sigma}{0}$ and $\D{}{\tau, \sigma}{0}$ the linear functional 
of $\Lambda$ obtained by applying the corresponding differential operator 
followed by evaluation at $0$. The definition of the polynomials $P_\sigma$ 
implies that $\D{}{\sigma}{0}(\gamma f) = \gamma(0) \D{}{\sigma}{0}(f)$ for 
all $f \in \Lambda$ and $\gamma \in \Gamma$. The following proposition shows 
that this functional extends to the algebra of rational functions without 
poles at $0$ and gives a generalized Leibniz rule to compute the result of 
applying this operator to the product of two such functions. 

\begin{Proposition} \label{P:leibniz-rule}
Let $f \in L$ be regular at zero and let $\sigma \in W$. Then $f_{(\sigma)}$ 
is also regular at $0$ and $f_{(\sigma)}(0) = \D{}{\sigma}{}(f)(0) = 
(\partial_{\sigma} f)(0)$. Furthermore if $g \in L$ is also regular at $0$
then
\begin{align*}
\D{}{\sigma}{0}(fg)
	&= \sum_{\rho \leq \sigma} \D{}{\rho, \sigma}{0}(f) \D{}{\rho}{0}(g)
	= \sum_{\rho \leq \sigma} \D{}{\rho}{0}(f) \D{}{\rho, \sigma}{0}(g).
\end{align*}
\end{Proposition}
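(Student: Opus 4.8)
The plan is to prove the three asserted identities in stages, building up from the polynomial case. First I would reduce the statement for $f \in \Lambda$ (a genuine polynomial) to the graded-dual formalism: by definition $\D{}{\sigma}{0}(f) = \Theta(P_\sigma)(f)(0) = (P_\sigma, f)_\Theta$, which by duality of $\{P_\sigma\}$ and the Schubert basis picks out exactly the coefficient $f_{(\sigma)}(0)$ of $\SS_\sigma$ in the expansion of $f$ modulo $I_W$. Since the divided difference $\partial_\sigma$ is $K$-linear on $L$, fixes $K = L^W$, and kills $I_W$-type corrections appropriately, a comparison of $\partial_\sigma \SS_\tau$ with $\delta_{\sigma,\tau}$ modulo lower-order terms (using $\deg\SS_\tau = \ell(\tau)$ and the fact that $\partial_\sigma$ lowers degree by $\ell(\sigma)$) will give $(\partial_\sigma f)(0) = f_{(\sigma)}(0)$ as well; this is essentially \cite{Hiller-coxeter-book}*{Chapter IV}. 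That settles the first sentence for polynomials.

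Next I would prove the Leibniz rule for $f, g \in \Lambda$. The key input is the Postnikov-Stanley identity $P_{\sigma,\tau} = \sum_\rho c^\tau_{\sigma,\rho} P_\rho$ from \cite{PS-chains-bruhat}*{Corollary 6.9}, together with the defining relation $\SS_\sigma\SS_\tau \equiv \sum_\rho c^\rho_{\sigma,\tau}\SS_\rho \bmod I_W$. Concretely: $\D{}{\sigma}{0}(fg) = (P_\sigma, fg)_\Theta$, and I would expand this using the fact that $(-,-)_\Theta$ together with the comultiplication dual to polynomial multiplication turns $\mathcal H_{I_W}$ into a coalgebra whose structure constants are exactly the $c^\rho_{\sigma,\tau}$; dualizing $\SS_\sigma\SS_\tau = \sum c^\rho_{\sigma,\tau}\SS_\rho$ gives $\Delta(P_\rho) = \sum_{\sigma,\tau} c^\rho_{\sigma,\tau}\, P_\sigma \otimes P_\tau$, hence $(P_\sigma, fg)_\Theta = \sum_{\tau,\rho} c^\sigma_{\tau,\rho}(P_\tau, f)_\Theta (P_\rho, g)_\Theta$. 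Reindexing via the definition $\D{}{\tau,\sigma}{} = \sum_\rho c^\sigma_{\tau,\rho}\D{}{\rho}{}$ collapses this to $\sum_{\rho\le\sigma}\D{}{\rho,\sigma}{0}(f)\D{}{\rho}{0}(g)$, and symmetry of $c^\sigma_{\tau,\rho}$ in $\tau,\rho$ gives the second form. The vanishing of $c^\sigma_{\tau,\rho}$ unless $\ell(\tau)+\ell(\rho)=\ell(\sigma)$ justifies restricting the sum to $\rho \le \sigma$.

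Finally I would pass from $\Lambda$ to rational functions regular at $0$. Given such $f$, write $f = p/q$ with $p, q \in \Lambda$ and $q(0) \neq 0$; since $q$ is then a unit in the local ring, $1/q$ has a Taylor expansion and I would first establish the claim for $1/q$ by applying the polynomial Leibniz rule to $q \cdot (1/q) = 1$ and solving recursively degree by degree for $\D{}{\rho}{0}(1/q)$ — this is where one sees $f_{(\sigma)}$ is itself regular at $0$, because the recursion only ever divides by $q(0)$. Then the general case follows by applying the (already-established, bilinear-in-the-polynomial-case) identity to $p$ and $1/q$; a density/continuity argument in the $\m_0$-adic topology, or simply the observation that both sides are additive and the polynomial identity extends by $\Lambda$-bilinearity to the localization, finishes it. The statement $f_{(\sigma)}(0) = \D{}{\sigma}{}(f)(0)$ for rational $f$ then follows by the same localization argument applied to the polynomial case $f_{(\sigma)}(0) = \D{}{\sigma}{0}(f)$, and the equality with $(\partial_\sigma f)(0)$ is inherited because $\partial_\sigma$ and $\D{}{\sigma}{}$ agree on $\Lambda$ modulo the ideal on which evaluation at $0$ vanishes, and both extend $K$-linearly to $L$.

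The main obstacle I anticipate is the bookkeeping around extending $\D{}{\sigma}{0}$ from $\Lambda$ to functions regular at $0$ and verifying that $f_{(\sigma)}$ stays regular there: one must be careful that the coalgebra/duality computation in step two, which is a priori a statement about the finite-dimensional space $\mathcal H_{I_W}$ paired against $\Lambda/I_W$, genuinely computes $\D{}{\sigma}{0}(fg)$ for honest polynomials $f,g$ of arbitrary degree (not just their classes mod $I_W$) — this works because $\D{}{\sigma}{0}$ factors through $\Lambda/I_W$ by the remark that $\D{}{\sigma}{0}(\gamma f) = \gamma(0)\D{}{\sigma}{0}(f)$, but spelling that out cleanly is the delicate point. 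Everything else is a routine unwinding of definitions.
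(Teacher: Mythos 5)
Your first two paragraphs (the polynomial case) are sound and are essentially the paper's own computation in dual language: since $\D{}{\sigma}{0}$ factors through $\Lambda/I_W$, one expands $f \equiv \sum_\tau f_{(\tau)}(0)\SS_\tau$, $g \equiv \sum_\rho g_{(\rho)}(0)\SS_\rho$ and reads off the constants $c^\sigma_{\tau,\rho}$; your coproduct phrasing changes nothing of substance. The genuine gap is in the passage to rational functions regular at $0$. First, you cannot ``apply the polynomial Leibniz rule to $q\cdot(1/q)=1$'': $1/q$ is not a polynomial, so the identity you feed into the recursion is an instance of the very statement being proved; at best the recursion defines candidate scalars, which you would still have to identify with the actual values $\D{}{\rho}{0}(1/q)$ by a separate argument. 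Second, and more seriously, ``the recursion only ever divides by $q(0)$'' does not touch the first claim of the proposition, that the coefficient function $f_{(\sigma)}\in K$ is regular at $0$: your recursion produces numbers, not information about the rational function $f_{(\sigma)}$. Third, the fallback ``the polynomial identity extends by $\Lambda$-bilinearity to the localization'' is false as stated: $f\mapsto \D{}{\rho}{0}(f)$ is only $\CC$-linear (its compatibility with multiplication is the twisted rule $\D{}{\sigma}{0}(\gamma f)=\gamma(0)\D{}{\sigma}{0}(f)$ for invariant $\gamma$), so there is no $\Lambda$-bilinearity to invoke.

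The missing idea, which is how the paper argues, is that the denominator can be taken $W$-invariant: multiplying numerator and denominator of $p/q$ by $\prod_{w\neq e}w(q)$ gives $f=p'/\gamma$ with $\gamma\in\Gamma$ and $\gamma(0)=q(0)^{|W|}\neq 0$ (the origin is $W$-fixed). Hence the algebra $A$ of functions regular at $0$ equals $T^{-1}\Gamma\cdot\Lambda$, where $T\subset\Gamma$ consists of invariants with nonzero constant term; $A$ is free over $T^{-1}\Gamma$ with basis $\{\SS_\sigma\}$, so $f_{(\sigma)}=p'_{(\sigma)}/\gamma$ is regular at $0$ for free, the twisted rule $\D{}{\sigma}{0}(\gamma f)=\gamma(0)\D{}{\sigma}{0}(f)$ extends to $\gamma\in T^{-1}\Gamma$, and the polynomial computation (including the $\partial_\sigma$ statement, via $K$-linearity of $\partial_\sigma$ and $\tfrac{1}{|W|}\partial_\sigma\partial_{\tau^{-1}\omega_0}\Delta(\Phi)=\SS_{\tau\sigma^{-1}}$ when lengths add) then goes through verbatim. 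Your alternative $\m_0$-adic continuity argument can be repaired to give the two evaluated identities — each $\D{}{\sigma}{0}$ kills a power of the maximal ideal at $0$, and one must check that $\partial_\sigma$ preserves regularity at $0$ and that its value there depends only on a finite jet — but even then it does not yield the regularity of $f_{(\sigma)}$, so some form of the invariant-denominator reduction is unavoidable.
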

\begin{proof}
Let $T \subset \Gamma$ be the set of $W$-invariant rational functions with 
nonzero constant term. This is clearly a $W$-invariant set and hence 
$T^{-1} \Gamma$ is a subalgebra of $K = \Frac(\Gamma)$. Denoting by $A$ the 
subalgebra of $L$ consisting of rational functions regular at $0$, the product 
map $T^{-1} \Gamma \ot \Lambda \to A$ is an isomorphism, since any fraction 
$p/q \in L$ with $p,q \in \Lambda$ can be rewritten so that $q \in \Gamma$. 
Thus $A$ is a free $T^{-1}\Gamma$-module with basis $\{\SS_\sigma \mid \sigma 
\in W\}$ and $f_{(\sigma)} \in A$ for all $\sigma \in W$. 

As noted in the preamble for each $\gamma \in \Gamma$ we have 
$\D{}{\sigma}{0}(\gamma f) = \gamma(0) \D{}{\sigma}{0}(f)$, and it follows 
that the same holds if $\gamma \in A^G$. Thus
\begin{align*}
\D{}{\sigma}{}(f)(0)
	&= \sum_{\tau} f_{(\tau)}(0) \D{}{\sigma}{}(\SS_\tau)(0)
	= f_{(\sigma)}(0)
\end{align*}
as stated. Analogously $\partial_\sigma$ is a $K$-linear operator, and hence
\begin{align*}
(\partial_{\sigma} f)(0)
	&= \sum_{\tau} f_{(\tau)}(0) \frac{1}{|W|}(\partial_{\sigma} 
		\partial_{\tau^{-1}\omega_0} \Delta(\Phi))(0).
\end{align*}
Now $\frac{1}{|W|}\partial_{\sigma} \partial_{\tau^{-1}\omega_0} \Delta(\Phi)$ 
is zero unless $\ell(\tau \sigma) = \ell(\tau) + \ell(\sigma)$, in which case it 
equals $\SS_{\tau\sigma^{-1}}$. The evaluation of this polynomial at $0$ is zero 
except when $\tau = \sigma$, and in this case the polynomial is just the 
constant $1 \in \CC$. Therefore, $(\partial_{\sigma} f)(0) = f_{(\sigma)}(0) 
= \D{}{\sigma}{0}(f)$.

Finally,
\begin{align*}
\D{}{\sigma}{0}(fg)
	&= \sum_{\tau, \rho} f_{(\tau)}(0) g_{(\rho)}(0) 
		\D{}{\sigma}{}(\SS_\tau \SS_\rho)(0) 
	= \sum_{\tau, \rho} c_{\tau, \rho}^\sigma \D{}{\tau}{0}(f) 
	\D{}{\rho}{0}(g),
\end{align*}
which proves the first identity in the proposition. To prove the second identity, we use that
$c^\sigma_{\tau, \rho} = c^\sigma_{\rho, \tau}$.
\end{proof}


\section{Galois orders and Gelfand-Tsetlin modules}
Throughout this section $\Gamma$ is a noetherian integral domain, $K$ is its 
field of fractions, and $L$ is a finite Galois extension of $K$ with Galois 
group $G$. Hence $K=L^G$.

\paragraph
\about{Galois orders}
We first recall the notion of a Galois ring (order), that was introduced in 
\cite{FO-galois-orders}. Let $\mathcal M$ be a monoid acting on $L$ by ring
automorphisms, such that for all $t \in \mathcal M$ and all $\sigma \in G$ we 
have $\sigma \circ t \circ \sigma^{-1} \in \mathcal M$. Then the action of $G$ 
extends naturally to an action on the smash product $L \# \mathcal M$. We 
assume that the monoid $\mathcal M$ is \emph{$K$-separating}, that is given 
$m,m' \in \mathcal M$, if $m|_K=m'|_K$ then $m = m'$. 
 
\begin{Definition}
Set $\mathcal{K} = L \# \mathcal M$.
\begin{enumerate}[(i)]
\item A \emph{Galois ring over $\Gamma$} is a finitely generated 
$\Gamma$-subring $U \subset (L \# \mathcal M)^G$ such that $UK=KU=
\mathcal{K}$. 

\item Set $S = \Gamma \setminus \{0\}$. A Galois ring $U$ over $\Gamma$ is a 
\emph{right (respectively, left) Galois order}, if for any finite-dimensional 
right (respectively left) $K$-subspace $W\subset U[S^{-1}]$ (respectively, 
$W\subset [S^{-1}]U$), the set $W\cap U$ is a finitely generated right 
(respectively, left) $\Gamma$-module. A Galois ring is \emph{Galois order} 
if it is both a right and a left Galois order.
\end{enumerate}
\end{Definition}
We will always assume that Galois rings are complex algebras. In this case 
we say that a Galois ring is a Galois algebra over $\Gamma$. 

\paragraph
\about{Principal and co-principal Galois orders}
Notice that $L \# \mathcal M$ acts on $L$, where for each $X = \sum_{m \in 
\mathcal M} l_m m \in L \# M$ we define its action on $f \in L$ by $X(f) = 
\sum_{m} l_m m(f)$.

As an example of a Galois order, Hartwig introduced \emph{the standard Galois 
$\Gamma$-order in $\mathcal K$} defined as $\mathcal K_\Gamma = \{X \in 
\mathcal K \mid X(\Gamma) \subset \Gamma\}$, see 
\cite{Hart-rational-galois}*{Theorem 2.21}. In this article the term 
``standard Galois order'' has a different meaning, and for sake of clarity 
will refer to the algebra above as \emph{the left Hartwig order of $\mathcal 
K$}. A \emph{principal Galois order} is any Galois order $U \subset \mathcal 
K_\Gamma$. By restriction $\Gamma$ is a left $U$-module for any principal 
Galois order, and hence its complex dual $\Gamma^*$ is a right $U$-module.

Denote by $\mathcal M^{-1}$ the monoid formed by the inverses of the elements 
in $\mathcal M$. Following \cite{Hart-rational-galois}, we define an 
anti-isomorphism $-^\dagger: L \# \mathcal M \to L \# \mathcal M^{-1}$ by 
$(l m)^\dagger = m^{-1} \cdot l
= m^{-1}(l) m^{-1}$ for any $l \in L, m \in \mathcal M$. The \emph{right 
Hartwig order} is thus defined as ${}_\Gamma \mathcal K = \{X \in \mathcal K
\mid X^\dagger(\Gamma) \subset \Gamma\}$, and a \emph{co-principal Galois 
order} is any Galois order contained in ${}_\Gamma \mathcal K$. Thus $\Gamma^*$
is a left $U$-module for any co-principal Galois order, with action given by 
$X \cdot \chi = \chi \circ X^\dagger$ for any $X \in U$ and $\chi \in 
\Gamma^*$.

\paragraph
\about{Gelfand-Tsetlin modules}\label{par-gt-modules}
Let $U$ be a Galois order over $\Gamma$ and let $M$ be any $U$-module. Given
$\m \in \Specm \Gamma$ we set $M[\m] = \{x \in M \mid \m^k x = 0 \mbox{ for }
k \gg 0\}$. Since ideals in $\Specm \Gamma$ are in one-to-one correspondence 
with characters $\chi: \Gamma \to \CC$ we also set $M[\chi] = \{x \in M \mid 
(\gamma - \chi(\gamma))^k x = 0 \mbox{ for all } \gamma \in \Gamma \mbox{ and 
} k \gg 0\}$. If $\chi$ is given by the natural projection $\Gamma \to \Gamma 
/ \m \cong \CC$ then $M[\m] = M[\chi]$.
\begin{Definition}
A Gelfand-Tsetlin module is a finitely generated $U$-module $M$ such that 
its restriction $M|_\Gamma$ to $\Gamma$ can be decomposed as a direct sum
$M|_\Gamma = \bigoplus_{\m \in \Specm \Gamma} M[\m]$. 
\end{Definition}
A $U$-module $M$ is a Gelfand-Tsetlin module if and only if for each $x \in M$
the cyclic $\Gamma$-module $\Gamma \cdot x$ is finite dimensional over $\CC$
\cite{DFO-GT-modules}*{\S 1.4}, which easily implies the following result. 
\begin{Lemma}
\label{L:sub-gt}
A $U$-submodule of a Gelfand-Tsetlin module is again a Gelfand-Tsetlin module.
\end{Lemma}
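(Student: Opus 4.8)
The plan is to deduce Lemma \ref{L:sub-gt} from the characterization stated immediately before it, namely that a $U$-module $M$ is a Gelfand-Tsetlin module if and only if $\Gamma \cdot x$ is finite-dimensional over $\CC$ for every $x \in M$. So first I would record this characterization as the working criterion, citing \cite{DFO-GT-modules}*{\S 1.4}; one direction is clear since if $M = \bigoplus_{\m} M[\m]$ then any $x$ lies in a finite sum of the $M[\m]$, on each of which some power of $\m$ annihilates $x$, so $\Gamma \cdot x$ is a finite-dimensional quotient of $\Gamma/\prod \m^{k_i}$, and conversely finite-dimensionality of each $\Gamma \cdot x$ forces the generalized weight-space decomposition.

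Given the criterion, the argument is short. Let $M$ be a Gelfand-Tsetlin $U$-module and $N \subseteq M$ a $U$-submodule. The only point that genuinely needs checking is that $N$ is finitely generated as a $U$-module; once that is known, for any $x \in N \subseteq M$ the cyclic $\Gamma$-module $\Gamma \cdot x$ is finite-dimensional because it is finite-dimensional already as a submodule of $M$ viewed over $\Gamma$, and then the criterion applies to $N$ directly, yielding $N|_\Gamma = \bigoplus_{\m} N[\m]$ (indeed $N[\m] = N \cap M[\m]$). So the real content is a noetherianity statement: submodules of finitely generated $U$-modules are finitely generated. I would obtain this from the fact that $U$ is a (left and right) Galois order over the noetherian domain $\Gamma$ and is module-finite-ish over $\Gamma$ in the relevant sense — more precisely, a Galois order is left and right noetherian, which is part of the basic theory in \cite{FO-galois-orders}; alternatively one can cite the standard fact, also used implicitly throughout the Gelfand-Tsetlin literature, that Galois orders are noetherian rings.

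The step I expect to be the only mild obstacle is precisely this appeal to noetherianity of $U$: the definition of Gelfand-Tsetlin module in the paper builds in "finitely generated", so without knowing $U$ is noetherian one cannot immediately conclude that a submodule of a Gelfand-Tsetlin module is again \emph{finitely generated}, only that it satisfies the weight-space decomposition. I would therefore state clearly that $U$ is left and right noetherian (a Galois order over a noetherian domain is noetherian, see \cite{FO-galois-orders}), so that $U$-submodules of finitely generated $U$-modules are finitely generated, and then assemble the two ingredients — noetherianity plus the finite-dimensionality criterion — into the one-line conclusion that $N$ is a Gelfand-Tsetlin module.
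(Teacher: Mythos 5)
Your core argument coincides with the paper's: the proof given there consists precisely of quoting the criterion from \cite{DFO-GT-modules}*{\S 1.4} --- $M$ is Gelfand-Tsetlin if and only if $\Gamma\cdot x$ is finite dimensional over $\CC$ for every $x\in M$ --- and observing that this condition passes to submodules (with $N[\m]=N\cap M[\m]$, exactly as you say). No further argument is given in the paper, so the inheritance half of your proposal is the intended proof.

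Where you go beyond the paper is the finite-generation step, and that is the one place your write-up is on shaky ground. You are right to notice the tension: the definition in the paper includes ``finitely generated,'' so the quoted ``if and only if'' cannot be literally correct (an infinite direct sum of Gelfand-Tsetlin modules satisfies the cyclic-module criterion), and for a submodule $N$ one does not automatically know finite generation. But your proposed repair --- ``a Galois order over a noetherian domain is left and right noetherian, see \cite{FO-galois-orders}'' --- is not a result contained in that reference, and noetherianity of Galois orders at this level of generality is not a standard fact one can simply cite; the paper itself never invokes it. So as written, the only step you added rests on an unsupported attribution. Note also that the extra step is not needed for anything in the paper: the lemma is used only in Step 1 of the proof of Theorem \ref{T:simplicity}, where the submodule in question is the cyclic module $U\cdot t$, hence finitely generated for free, and only the decomposition $N|_\Gamma=\bigoplus_{\m\in\Specm\Gamma}N[\m]$ (equivalently, local finiteness of the $\Gamma$-action) is ever exploited. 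The honest fix is therefore either to read the lemma as asserting the decomposition property for submodules (which is what the paper's cited criterion gives immediately), or to restrict it to finitely generated submodules --- not to import a noetherianity claim that neither \cite{FO-galois-orders} nor this paper provides.
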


For every maximal ideal $\m$ of $\Gamma$ we denote by $\varphi(\m)$ the number 
of non-isomorphic simple Gelfand-Tsetlin modules $M$ for which $M[\m] \neq 0$. 
Sufficient conditions for the number $\varphi(\m)$ to be nonzero and finite 
were established in \cite{FO-fibers-gt}. 

Consider the integral closure $\overline{\Gamma}$ of $\Gamma$ in $L$. It is a 
standard fact that if $\Gamma$ is finitely generated as a complex algebra then 
any character of $\Gamma$ has finitely many extensions to characters of 
$\overline{\Gamma}$. Let $\overline{\m}$ be any lifting of $\m$ to 
$\overline{\Gamma}$, and $\mathcal M_{\m}$ be the stabilizer of $\overline{\m}$
in $\mathcal M$. Note that the monoid $\mathcal M_{\bf m}$ is defined uniquely 
up to $G$-conjugation. Thus the cardinality of $\mathcal M_{\m}$ does not 
depend on the choice of the lifting. We denote this cardinality by $|\m|$.

\begin{Theorem}{\cite{FO-fibers-gt}*{Main Theorem and Theorem 4.12}}
\label{T:theorem-extension}
Let $\Gamma$ be a commutative domain which is finitely generated as a complex 
algebra and let $U\subset (L\# \mathcal M)^{G}$ be a right Galois order 
over $\Gamma$. Let also $\m \in \Specm \Gamma$ be such that $|\m|$ is finite. 
Then the following hold.
\begin{enumerate}[(i)]
\item
\label{fiber-nontrivial} 
The number $\varphi(\m)$ is nonzero.

\item
\label{fiber-finite} 
If $U$ is a Galois order over $\Gamma$ then the number $\varphi(\m)$ is finite 
and uniformly bounded.

\item
If $U$ is a Galois order over $\Gamma$ and $\Gamma$ is a normal Noetherian 
algebra, then for every simple Gelfand-Tsetlin module $M$ the space 
$M[\m]$ is finite dimensional and bounded.
\end{enumerate}
\end{Theorem}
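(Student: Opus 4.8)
The plan is to extract all three statements from the structure of a single ``universal'' Gelfand--Tsetlin module attached to $\m$. Fix a character $\chi\colon\Gamma\to\CC$ with $\ker\chi=\m$ and form the cyclic left $U$-module $N(\m)=U/U\m$, where $U\m$ is the left ideal generated by $\m\subset\Gamma\subset U$. First I would show $N(\m)$ is Gelfand--Tsetlin: every element of $\mathcal K=L\#\mathcal M$ is a \emph{finite} sum $\sum_m a_m m$, left multiplication by $\gamma\in\Gamma$ merely rescales the coefficients $a_m$ (leaving the set of monoid elements that occur unchanged), while right multiplication by $\gamma'\in\m$ replaces $a_m$ by $m(\gamma')a_m$; using the $K$-separating hypothesis on $\mathcal M$ this forces $\Gamma\cdot\bar u$ to be annihilated by a cofinite ideal of $\Gamma$ for every $\bar u\in N(\m)$, hence to be finite dimensional, so $N(\m)$ is Gelfand--Tsetlin by the criterion of \cite{DFO-GT-modules}*{\S 1.4}. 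Since $\m\subseteq U\m$ we get $\gamma\cdot\bar 1=\chi(\gamma)\bar 1$ for all $\gamma$, so $\bar 1\in N(\m)[\m]$; the one point that genuinely uses the Galois structure is that $\bar 1\neq0$, i.e.\ that $\m$ does not generate the whole left ideal of $U$, which I would obtain by inverting $S=\Gamma\setminus\{0\}$ (so $U[S^{-1}]=\mathcal K$) and producing a nonzero functional on $N(\m)$ killing $U\m$ out of a lift $\overline{\m}$ of $\m$ to the integral closure $\overline{\Gamma}$ of $\Gamma$ in $L$. Granting $\bar 1\neq0$, every nonzero simple quotient $M$ of the cyclic module $N(\m)=U\bar 1$ receives a nonzero image of $\bar 1$ and hence satisfies $M[\m]\neq0$; this proves (i).

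For (ii) and (iii) I would first reduce each simple Gelfand--Tsetlin $M$ with $M[\m]\neq0$ to a quotient of $N(\m)$: choosing $0\neq x\in M$ with $\m^kx=0$ and $k$ minimal, the vector $\m^{k-1}x$ is nonzero and annihilated by $\m$, and since $M$ is simple $M=U(\m^{k-1}x)$ is a quotient of $U/U\m=N(\m)$. Hence the isomorphism classes counted by $\varphi(\m)$ lie among the simple quotients of $N(\m)$, and they are ``concentrated at $\m$'': they are detected through the fiber space $F(\m)=\{z\in N(\m)\mid\m z=0\}$, so (ii) reduces to showing $F(\m)$ is finite dimensional with a bound independent of $\m$, while (iii) reduces in addition to bounding $\dim_\CC M[\m]$, i.e.\ to controlling arbitrary powers of $\m$ rather than merely the socle.

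The finiteness of the fiber, and the resulting uniform bounds, is the crux and the step I expect to be the main obstacle. Using $U[S^{-1}]=\mathcal K$ and the integral closure $\overline{\Gamma}$, one identifies $F(\m)$ -- and the datum separating the simple quotients of $N(\m)$ through it -- with a space of ``$\mathcal M$-equivariant germs'' supported on the finitely many primes of $\overline{\Gamma}$ lying over $\m$ together with their $\mathcal M$-orbit; the hypothesis that $|\m|=|\mathcal M_{\overline{\m}}|$ is finite keeps the orbit contribution under control, and the defining property of a two-sided Galois \emph{order} -- that $W\cap U$ is a finitely generated $\Gamma$-module for every finite-dimensional $K$-subspace $W\subseteq\mathcal K$ -- is exactly what forces this space to be finite dimensional, with a bound expressible through invariants of $U$ and $|G|=[L:K]$, hence uniform in $\m$. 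This yields (ii). For (iii) I would run the same estimate $\m$-adically: normality of $\Gamma$ ensures that $\overline{\Gamma}$ and the localizations $\Gamma_\m$ are well behaved enough to pass to the $\m$-adic completion, and one then applies the order axiom to the finite-dimensional $K$-subspace of $\mathcal K$ spanned by a generating set of $M[\m]$ in order to bound the length, hence the dimension, of $M[\m]$ over all simple $M$. The real difficulty throughout is the passage between the localized picture over $\mathcal K$, where the situation is ``free of rank $|G|$'', and the integral picture over $U$ and $\Gamma$: the order axiom is precisely the bridge, but squeezing out of it a bound that does not deteriorate as $\m$ varies requires careful bookkeeping of how the $\mathcal M$-orbit of $\overline{\m}$ (where $|\m|$ enters) interacts with the finite supports of a fixed generating set of $U$.
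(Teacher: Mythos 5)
First, a point of comparison: the paper does not prove this statement at all --- it is quoted verbatim from \cite{FO-fibers-gt} (their Main Theorem and Theorem 4.12), so there is no in-paper argument to measure your proposal against; what you are really attempting is a reconstruction of the Futorny--Ovsienko proof, and as such it has genuine gaps at exactly the points you flag as ``expected obstacles''.

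The most serious gap is the claim that $N(\m)=U/U\m$ is a Gelfand--Tsetlin module. Your argument conflates left and right multiplication: for $u=\sum_m a_m m$ and $\gamma\in\Gamma$ one has $\gamma\cdot a_m m = a_m\, m\cdot m^{-1}(\gamma)$, but $m^{-1}(\gamma)$ lies in $L$, not in $\Gamma$ (the monoid does not preserve $K$ --- that is the whole point of $K$-separation), so right multiplication by elements of $\m$ does not directly control left multiplication by $\Gamma$, and ``$K$-separating'' alone does not force $\Gamma\cdot\bar u$ to be killed by a cofinite ideal. Local finiteness of the $\Gamma$-action here is equivalent to $\Gamma$ being a Harish-Chandra subalgebra of $U$, and its proof in \cite{FO-fibers-gt} necessarily passes through the integral closure $\overline\Gamma$, the finitely many lifts $\overline\m$, and the finiteness of the stabilizer $|\m|$; none of that machinery appears in your sketch except as a name. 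The same is true of the nonvanishing $\bar 1\neq 0$, which you correctly identify as the heart of (i) but only ``obtain'' by promising a functional killing $U\m$. In (ii) your reduction to the space $F(\m)=\{z\in N(\m)\mid \m z=0\}$ is also not justified: to bound the number of nonisomorphic simple quotients one wants finite-dimensionality of the full generalized eigenspace $N(\m)[\m]$ (together with an argument such as: for pairwise nonisomorphic simple quotients $M_1,\dots,M_k$ the direct sum $\bigoplus_i M_i$ is again a quotient of $N(\m)$, and exactness of taking $[\m]$-parts on locally finite modules gives $k\le\dim N(\m)[\m]$); surjections need not be surjective on $\m$-annihilators, so controlling only $F(\m)$ does not suffice, and the \emph{uniformity} of the bound in $\m$ --- which is part of the statement --- is nowhere addressed. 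Finally, part (iii) (boundedness of $\dim M[\m]$ under normality of $\Gamma$) is not proved but merely described as ``running the same estimate $\m$-adically''; this is precisely the technical content of Theorem 4.12 of \cite{FO-fibers-gt} and cannot be dispatched by invoking the order axiom once. In short: your global strategy (cyclic module $U/U\m$, reduction of simples in the fiber to its quotients, finiteness of the fiber via the order axiom and $|\m|<\infty$) is the right shape, but each of the three load-bearing steps is asserted rather than proved, and the first one is argued incorrectly as stated.
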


\section{Rational Galois orders}
Recall that $V$ is a complex vector space with an inner product. We set 
$\Lambda = S(V)$, and $L = \Frac(\Lambda)$. Recall that an element $g \in 
\mathsf{GL}(V)$ is called a \emph{pseudo-reflection} if it has finite order 
and fixes a hyperplane of codimension $1$. By definition every reflection is a 
pseudo-reflection, and the converse holds over $\RR$ but not over $\CC$, which 
is why finite groups generated by pseudo-reflections are called 
\emph{pseudo-reflection groups} or \emph{complex reflection groups}. We fix 
$G \subset \mathsf{GL}(V)$ a pseudo-reflection group. As usual the 
action of $G$ on $V$ induces actions on $\Lambda$ and $L$, and we denote by 
$\Gamma$ the algebra of $G$-invariant elements of $\Lambda$ and set $K = L^G$.

\paragraph Let $L \hookrightarrow \End_\CC(L)$ be the $\CC$-algebra morphism 
that sends any rational function $f \in L$ to the $\CC$-linear map $m_f: f' 
\in L \mapsto ff' \in L$. Although $\End_\CC(L)$ is not a $L$-algebra, it is
an $L$-vector space with $f \cdot \phi = m_f \circ \phi$ for all
$\phi \in \End_\CC(L)$. Also $G$ acts on $\End_\CC(L)$ by conjugation and 
$\sigma \cdot m_f = \sigma \circ m_f \circ \sigma^{-1} = m_{\sigma(f)}$ for 
each $\sigma \in G$, so the map $f \mapsto m_f$ is $G$-equivariant. For 
simplicity we will write $f$ for the operator $m_f$.

Given $v \in V$ we define a map $a_v: V \to V$ given by $a_v(v') = v'+v$. This
in turn induces an endomorphism of $\Lambda$, which we denote by $t_v$, given 
bt $t_v(f) = f \circ a_{v}$; we sometimes write $f(x+v)$ for $t_v(f)$. Each 
map $t_v$ can be extended to a $\CC$-linear operator on $L$ and
$t_v \circ t_{v'} = t_{v+v'}$, so $V$ acts on $L$ by automorphisms and we can 
form the smash product $L\# V$. Once again there is an algebra morphism $L \# V
\rightarrow \End_\CC(L)$, and the definitions imply that this map is 
$G$-equivariant. 

\begin{Lemma}
\label{L:translation-lemma}
Let $G$, $V$, and $L$ be as above, and let $Z \subset V$ be an arbitrary 
subset. Then the set $\{t_z \mid z \in Z\} \subset \End_\CC(L)$ is linearly
independent over $L$, and the map $L \# V \to \End_\CC(L)$ is injective.
\end{Lemma}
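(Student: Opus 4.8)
The plan is to prove the linear independence of the translation operators $\{t_z \mid z \in Z\}$ over $L$ by a standard argument on rational functions, and then deduce injectivity of $L \# V \to \End_\CC(L)$ as a formal consequence. Since any linear dependence relation involves only finitely many $z$'s, it suffices to show that any finite set $\{t_{z_1}, \ldots, t_{z_k}\}$ with the $z_i$ pairwise distinct is linearly independent over $L$. Suppose $\sum_{i=1}^k f_i t_{z_i} = 0$ in $\End_\CC(L)$ with $f_i \in L$, not all zero; among all such nontrivial relations choose one with $k$ minimal. Then all $f_i$ are nonzero (otherwise drop the vanishing terms and contradict minimality), and $k \geq 2$ since a single $f_1 t_{z_1} = 0$ forces $f_1 = 0$ (evaluate at the constant function $1$).

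The key step is to exploit the non-commutation between multiplication operators and translations. For any $g \in L$, applying the relation to a product $g h$ gives $\sum_i f_i\, t_{z_i}(g)\, t_{z_i}(h) = 0$ for all $h \in L$, i.e. $\sum_i f_i t_{z_i}(g) t_{z_i} = 0$ as operators. Subtracting $t_{z_k}(g)$ times the original relation yields $\sum_{i=1}^{k-1} f_i \bigl(t_{z_i}(g) - t_{z_k}(g)\bigr) t_{z_i} = 0$, a shorter relation. By minimality of $k$ this forces $f_i\bigl(t_{z_i}(g) - t_{z_k}(g)\bigr) = 0$ for all $i < k$ and all $g \in L$; since $f_i \neq 0$ and $L$ is a domain, we get $t_{z_i}(g) = t_{z_k}(g)$ for every $g \in L$ and every $i < k$. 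Taking $g$ to be a linear coordinate $x \in V^*$ we have $t_{z_i}(x) = x + x(z_i)$ and $t_{z_k}(x) = x + x(z_k)$, so $x(z_i) = x(z_k)$ for all $x \in V^*$, forcing $z_i = z_k$, contradicting distinctness. Hence no nontrivial relation exists and the set $\{t_z \mid z \in Z\}$ is linearly independent over $L$.

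For the second assertion, recall that $L \# V$ is by definition the free $L$-module with basis $V$, and the algebra map $L \# V \to \End_\CC(L)$ sends $\sum_{z} f_z\, z \mapsto \sum_z m_{f_z} \circ t_z$. An element of the kernel is a finite sum $\sum_z f_z z$ mapping to $\sum_z f_z t_z = 0$; by the linear independence just established all $f_z$ vanish, so the element is zero. Therefore the map is injective.

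The routine verifications are the Leibniz-type identity $t_v(gh) = t_v(g)\,t_v(h)$ (immediate since $t_v$ is a ring endomorphism of $L$, extended to the fraction field) and the formula $t_z(x) = x + x(z)$ for $x \in V^*$, which is just the definition of $a_z$. The only place requiring a little care is confirming that the ``subtract a shorter multiple'' step genuinely produces a relation with strictly fewer than $k$ nonzero terms for a well-chosen $g$ — but any $g$ with $t_{z_1}(g) \neq t_{z_k}(g)$ works to start the descent, and such a $g$ exists precisely because $z_1 \neq z_k$; this is really the main (and only mild) obstacle, and it is resolved exactly as above. No deeper input is needed: this is the classical Artin/Dedekind independence-of-characters argument adapted from field automorphisms to the translation action on $L$.
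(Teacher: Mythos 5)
Your proof is correct, but it follows a different route from the paper's. You run the classical Artin--Dedekind descent: take a minimal nontrivial relation $\sum_i f_i t_{z_i}=0$, apply it to products $gh$ to get $\sum_i f_i t_{z_i}(g)\,t_{z_i}=0$, subtract $t_{z_k}(g)$ times the original relation, and use minimality to force $t_{z_i}(g)=t_{z_k}(g)$ for all $g$, which on linear forms gives $z_i=z_k$, a contradiction. This works because each $t_z$ is a field automorphism of $L$ and distinct $z$ give distinct automorphisms (they already differ on $V^*$), so your argument is in effect a re-proof of Dedekind's lemma in this setting --- the same lemma the paper invokes earlier to embed $L\#W$ into $\End_\CC(L)$; it is robust and would apply verbatim to any family of distinct automorphisms. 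The paper instead argues by evaluation and interpolation: given the relation, it applies the operator to a polynomial $p\in\Lambda$ chosen, for an arbitrary point $v\in V$, so that $p$ takes prescribed values at the finitely many translates $v+z_j$ (separating $z_i$ from the others), and deduces $f_i(v)=0$ for every $v$, hence $f_i=0$. That version is more hands-on and stays inside $\Lambda$, avoiding the minimal-relation descent, but is special to the translation action; yours is shorter to justify rigorously and more general. Both proofs conclude injectivity of $L\#V\to\End_\CC(L)$ identically, from $L$-linearity of the map together with the image of the basis being linearly independent. One small stylistic point: your closing remark about needing ``a well-chosen $g$ to start the descent'' is unnecessary --- the descent is performed for all $g$ simultaneously and the contradiction is extracted afterwards by specializing $g$ to a linear coordinate, exactly as you in fact do in the body of the argument.
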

\begin{proof}
Put $T = \sum_{i=1}^N f_i t_{z_i}$ where $f_i \in L^\times$ and each $z_i 
\in Z$, and assume $T = 0$. Given $p \in \Lambda$ we obtain that $0 = T(p) = 
\sum_i f_i p(x+z_i)$, or, equivalently, $p(x) \sum_i f_i = \sum_i [p(x+z_i)- 
p(x)] f_i$. Let $v \in V$ be arbitrary and choose a polynomial $p$ of positive 
degree such that $p(v) = p(v + z_j)$ for all $j \neq i$ but $ p(v) + 1 = 
p(v + z_i)$. Then $0 = p(v+z_i) f_i(v)$ so $f_i(v) = 0$. Since $v$ is 
arbitrary this implies that $f_i = 0$ so the set $\{t_z \mid z \in Z\}$ is 
$L$-linearly independent. Since the morphism $L \# V \to \End_\CC(L)$ is 
$L$-linear and sends an $L$-basis of $L \# V$ to a linearly independent subset,
it must be injective.
\end{proof}

\paragraph
\about{Rational Galois orders} 
Given a character $\chi: G \to \CC^\times$, the space of relative invariants 
$\Lambda^G_\chi = \{f \in \Lambda \mid \sigma \cdot p = \chi(\sigma)p 
\mbox{ for all } \sigma \in G\} \subset \Lambda$ is a $\Lambda^G$-submodule of 
$\Lambda$. By a theorem of Stanley \cite{Hiller-coxeter-book}*{4.4 Proposition}
$\Lambda^G_\chi$ is a free $\Lambda^G$-module of rank $1$. The generator of 
$\Lambda^G_\chi$ is $d_\chi = \prod_{H \in \A(G)} (\alpha_H)^{a_H}$,
where $\A(G)$ is the set of hyperplanes that are fixed pointwise by some 
element of $G$, each $\alpha_H$ is a linear form such that $\ker \alpha_H = 
H$, and $a_H \in {\mathbb Z}_{\geq 0}$ is minimal with the property 
$\det[s_H^*]^{a_H} = \chi(s_H)$ for an arbitrary generator $s_H$ of the 
stabilizer of $H$ in $G$. Note that $a_H$ is independent on the choice of 
$s_H$, and that if $G$ is a Coxeter group then $a_H$ is either 
$1$ or $0$.

\begin{Definition}[\cite{Hart-rational-galois}*{Definition 4.3}]
A \emph{co-rational Galois order} is a subalgebra $U \subset \End_\CC(L)$
generated by $\Gamma$ and a finite set of operators $\mathcal X \subset 
(L \# V)^G$ such that for each $X \in \mathcal X$ there exists 
$\chi \in \hat G$ with $X d_\chi \in \Lambda \# V$.
\end{Definition}

Given $X \in L\# V$ we define its \emph{support} as the set of all $v \in V$ 
such that $t_v$ appears with nonzero coefficient in $X$. Note that the support 
is well-defined since the set $\{t_v \mid v \in V\}$ is free over $L$. We 
denote the support of $X$ by $\supp X$. Given a co-rational Galois order $U 
\subset (L\# V)^G$ we denote by $\Z(U)$ the additive monoid generated by 
$\{\supp X \mid X \in U\}$ in $V$. By \cite{Hart-rational-galois}*{Theorem 4.2}
$U$ is a co-principal Galois order in $(L\# \Z(U))^G$. In particular 
$\Gamma^*$ is a left $U$-module.

Let $v \in V$, let $\ev_v: \Gamma \to \CC$ be the character given by 
evaluation at $v$, and let $\m = \ker \ev_v$. Then the cyclic $U$-module $U 
\cdot \ev_v \subset \Gamma$ is a Gelfand-Tsetlin module 
\cite{Hart-rational-galois}*{Theorem 3.3}, and since $\ev_v \in U \cdot 
\ev_v[\m]$ we have a new proof that $\phi(\m) \neq 0$ for rational Galois 
orders. The following sections are devoted to study a different module 
associated to $v$, which always contains $\ev_v$ and turns out to be equal to 
$\ev_v \cdot U$ for generic $v$ (see Theorem \ref{T:simplicity}).

\section{Structure of $\Gamma$-modules associated to Postnikov-Stanley 
operators}
Throughout this section we fix a complex vector space $V$, and a root system
$\Phi$. We also fix a root subsytem $\Psi \subset \Phi$ with base
$\Omega \subset \Psi$. We denote by $G$ the Weyl group associated to 
$\Phi$ and by $W$ the one associated to $\Psi$. Like before, 
$\Lambda = S(V)$, $L = 
\Frac(\Lambda)$, 
$\Gamma = \Lambda^G$, and $K = L^G$. Since $W \subset G$, the group $W$ also 
acts on the vector spaces $\Lambda$, $\Gamma$, etc. All Schubert polynomials, 
Postnikov-Stanley operators, standard elements, etc. are defined with respect 
to the subsystem $\Psi$ and the base $\Omega$ unless otherwise stated.

\begin{Lemma}
\label{L:translation}
Let $v \in V$ and let $\pi^W: \Lambda \to \Lambda/I_W$ be the natural 
projection. Then $\pi^W(t_v(\Gamma)) = (\Lambda/I_W)^{W_v}$.
\end{Lemma}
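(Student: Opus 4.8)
The plan is to prove the two inclusions separately; the inclusion $\subseteq$ is elementary, and $\supseteq$ is the substantial part. For $\subseteq$, the point is that $t_v(\Gamma)$ already lies in $\Lambda^{W_v}$: if $w\in W_v$ then $w$ fixes $v$, hence commutes with the translation $t_v$, and since $w\in W\subseteq G$ it fixes $\Gamma=\Lambda^G$ pointwise, so $w\cdot(t_v\gamma)=t_v(w\cdot\gamma)=t_v\gamma$ for every $\gamma\in\Gamma$. As $I_W$ is a $W$-stable ideal, $\pi^W$ is $W$-equivariant, and therefore $\pi^W(t_v(\Gamma))\subseteq(\Lambda/I_W)^{W_v}$; moreover, taking $W_v$-invariants is exact over $\CC$, so $\pi^W$ restricts to a surjection $\Lambda^{W_v}\twoheadrightarrow(\Lambda/I_W)^{W_v}$, a fact I will reuse below.

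For $\supseteq$, I would first reduce to the case that $v$ is $\Omega$-standard. For $\sigma\in W$ one has $t_{\sigma(v)}(\Gamma)=\sigma\big(t_v(\Gamma)\big)$ (since $\sigma$ fixes $\Gamma$), $W_{\sigma(v)}=\sigma W_v\sigma^{-1}$, and $(\Lambda/I_W)^{W_{\sigma(v)}}=\sigma\big((\Lambda/I_W)^{W_v}\big)$; by $W$-equivariance of $\pi^W$ the identity for $v$ is equivalent to the one for $\sigma(v)$, and by Section~2 we may pick $\sigma$ so that $\sigma(v)$ is standard. So assume $W_v=W_\theta$ with $\theta\subseteq S$. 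By the description recalled in Section~3, $(\Lambda/I_W)^{W_\theta}$ has $\CC$-basis $\{\SS_\sigma\mid\sigma\in W^\theta\}$; combined with the previous paragraph, it then suffices to prove $\dim_\CC\pi^W(t_v(\Gamma))\geq|W^\theta|$.

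The crux is an identification of $\pi^W(t_v(\Gamma))$ with the image in $\Lambda/I_W$ of the invariant ring $\Lambda^{G_v}$ of the (linear) stabilizer $G_v$ of $v$ in $G$; note $W_v=W\cap G_v$. On the one hand, $t_v(\Gamma)\subseteq\Lambda^{G_v}$: the homogeneous components of $t_v\gamma=\gamma(x+v)$ are scalar multiples of the iterated directional derivatives $\partial_v^{i}\gamma$, and $\partial_v$ commutes with every element of $G_v$ (directional derivatives transform covariantly and $G_v$ fixes $v$), so these components lie in $\Lambda^{G_v}$. On the other hand, write $t_v(\Gamma)=\Lambda^{G'}$ for the finite group $G'=a_{-v}\,G\,a_v$ of affine transformations fixing $-v$; then the composite $\Lambda^{G'}\hookrightarrow\Lambda\twoheadrightarrow\Lambda/I_W$ factors through the completion of $\Lambda^{G'}$ at the image of $0$, which is $(\widehat{\Lambda}_0)^{G_v}$ because the stabilizer of $0$ in $G'$ is exactly $G_v$ acting linearly; the identical factorization for $\Lambda^{G_v}\hookrightarrow\Lambda\twoheadrightarrow\Lambda/I_W$ then shows the two images agree.

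Granting this, since $\Lambda^{G_v}\subseteq\Lambda^{W_v}$ and $\pi^W(\Lambda^{W_v})=(\Lambda/I_W)^{W_v}$ has dimension $|W^\theta|$, the lemma reduces to the inclusion $\Lambda^{W_v}\subseteq\Lambda^{G_v}+I_W$. This last point is where I expect the real work to lie, and where the hypothesis that $\Psi$ is a subsystem of $\Phi$ — so that $W$ is a parabolic subgroup of $G$, and hence $W_v=W\cap G_v$ is a parabolic subgroup of $G_v$ — is essential: one has $(V^*)^W\subseteq\Lambda^W_+\subseteq I_W$, so reducing modulo $I_W$ amounts to restriction to the subspace on which $W$ acts effectively, and there the coordinates dual to $V^W$ become zero. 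Using that $\Lambda^{W_v}$ is a free $\Lambda^{G_v}$-module (both rings are polynomial, with $\Lambda^{W_v}$ finite and Cohen–Macaulay over the regular ring $\Lambda^{G_v}$), it then suffices to rewrite each homogeneous free generator of this module inside $\Lambda^{G_v}$ modulo $I_W$, which the vanishing of those coordinates makes possible from the explicit combinatorics of parabolic subsystems (in type $A$, for instance, deleting a node sets one coordinate to zero in $\Lambda/I_W$ and so splits the relevant symmetrizations). Carrying out this combinatorial verification is the one step that I expect to need a genuine case analysis.
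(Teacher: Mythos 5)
Your opening reductions are sound: $t_v(\Gamma)\subseteq\Lambda^{G_v}\subseteq\Lambda^{W_v}$, the $W$-equivariance of $\pi^W$, the surjection $\Lambda^{W_v}\twoheadrightarrow(\Lambda/I_W)^{W_v}$ in characteristic $0$, and the reduction to a standard $v$ are all fine, and even the completion argument identifying $\pi^W(t_v(\Gamma))$ with $\pi^W(\Lambda^{G_v})$ can be made rigorous (it is the standard fact that the completion of an invariant ring at the image of a point is the ring of invariants of the completed local ring under the stabilizer). But the proof is not complete: after these reductions the lemma is exactly equivalent to the inclusion $\Lambda^{W_v}\subseteq\Lambda^{G_v}+I_W$, and this — the entire content of the statement — is not proved; it is deferred to an unspecified ``combinatorial verification'' of how free generators of $\Lambda^{W_v}$ over $\Lambda^{G_v}$ can be rewritten modulo $I_W$, which you yourself flag as requiring a genuine case analysis. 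Moreover, the structural claim you call essential for that step is false: a root subsystem $\Psi\subseteq\Phi$ does not in general give a parabolic subgroup $W\subseteq G$ (for instance the long roots of $B_2$ form a subsystem whose Weyl group is not parabolic), and the lemma is stated, and later used, for arbitrary subsystems; so you cannot assume $W_v=W\cap G_v$ is parabolic in $G_v$, and the intended finish may not exist in the stated generality. Also, reducing modulo $I_W$ is not restriction to the subspace where $W$ acts effectively — $I_W$ contains all positive-degree $W$-invariants, not just the ideal of that subspace.

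For comparison, the paper avoids all of this by working with fraction fields: $L^W\subseteq L$ is Galois with group $W$, so the compositum $L^W t_v(K)$ is the fixed field of some subgroup $\widetilde W\subseteq W$; the computation $\sigma\cdot t_v(f)=t_{\sigma(v)}(\sigma\cdot f)$ shows $W_v\subseteq\widetilde W$, and conversely $\sigma\in\widetilde W$ forces $t_{\sigma(v)-v}(f)=f$ for all $f\in K$, hence $\sigma(v)=v$. Thus $L^W t_v(K)=L^{W_v}$, from which $\Lambda^W t_v(\Gamma)=\Lambda^{W_v}$, and applying $\pi^W$ (using $\Lambda^W_+\subseteq I_W$) gives $\pi^W(t_v(\Gamma))=\pi^W(\Lambda^{W_v})=(\Lambda/I_W)^{W_v}$. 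That route needs neither parabolicity nor any case analysis, so the inclusion you left open is precisely where your proposal falls short of a proof.
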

\begin{proof}
Recall that $K$ is the fixed field of $G$ in $L$, and hence the fraction field of 
$\Gamma$. Since the extension $L^W \subset L$ is a Galois extension with 
Galois group $W$, the field $L^W t_v(K) \subset L$ must be the fixed field
of a subgroup $\widetilde W \subset W$. If $\sigma \in W_v$ and $f \in K$ then
$\sigma \cdot t_v(f) = t_{\sigma(v)} (\sigma \cdot f) = t_v(f)$, so $W_v 
\subset \widetilde W$. On the other hand, if $\sigma \in \widetilde W$, then $t_v(f)
= t_{\sigma(v)}(f)$.  So, in this case, $t_{\sigma(v) - v}(f) = f$ for all $f \in K$ and this
implies that $\sigma(v) = v$ so $\sigma \in W_v$. Thus $L^W t_v(K) = L^{W_v}$
which implies that $\Lambda^{W} t_v(\Gamma) = \Lambda^{W_v}$. Since all non-constant polynomials in $\Lambda^W$ are in the kernel of $\pi^W$ we see that
$\pi^W(\Lambda^W t_v(\Gamma)) = \pi^W(t_v(\Gamma))$, so this last space equals
$\pi^W(\Lambda^{W_v}) = (\Lambda/I_W)^{W_v}$.
\end{proof}

\paragraph
Let $\D{}{}{}(\Omega, v)$ be the complex subspace of $L^*$ (the complex dual
of $L$) spanned by $\{\D{\Omega}{\sigma}{v} \mid \sigma \in W\}$. From now on we omit the superscript $\Omega$. The generalized Leibniz rule from 
Proposition \ref{P:leibniz-rule} implies that $\D{}{}{}(\Omega, v)$ is a 
$\Lambda$-submodule of $\Hom_\CC(L,\CC)$, since for each $f \in \Lambda$ and 
$g \in L$ we have
\begin{align*}
	(f \cdot \D{}{\sigma}{v})(g)
		&= \D{}{\sigma}{0} (t_v(f)t_v(g))
		= \sum_{\tau \leq \sigma} 
			\D{}{\tau,\sigma}{0}(t_v(f)) \D{}{\tau}{0}(t_v(g)) \\
		&= \sum_{\tau \leq \sigma} \D{}{\tau,\sigma}{v}(f) \D{}{\tau}{v}(g).
\end{align*} 
Now let $\DD{}{\sigma}{v}$ denote the restriction of $\D{}{\sigma}{v}$ to 
$\Gamma$ and let $\DD{}{}{}(\Omega, v)$ be the subspace of $\Gamma^*$ spanned
by $\left\{\DD{}{\sigma}{v} \mid \sigma \in W \right\}$. We call  $\DD{}{}{}(\Omega, v)$ \emph{the space of BGG differential operators associated to $\Omega$ and $v$}. The same computation as above shows 
that $\DD{}{}{}(\Omega, v)$  is a $\Gamma$-submodule of $\Gamma^*$. We record this result in the 
following theorem.
\begin{Theorem}
\label{T:gamma-module}
Let $v \in V$. The space $\mathcal D(\Omega, v)$ is a $\Gamma$-submodule of 
$\Gamma^*$ and for each $\gamma \in \Gamma$
\begin{align*}
\gamma \cdot \DD{}{\sigma}{v}
	&= \gamma(v) \DD{}{\sigma}{v} + 
		\sum_{\tau < \sigma} \D{}{\sigma, \tau}{v}(\gamma) 
			\DD{}{\tau}{v}.
\end{align*}
\end{Theorem}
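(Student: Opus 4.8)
The plan is to derive the displayed formula directly from the generalized Leibniz rule of Proposition \ref{P:leibniz-rule}, exactly as in the computation preceding the statement, and then to extract the stated closed form by analyzing the diagonal and off-diagonal terms. Concretely, for $\gamma \in \Gamma$ and $g \in L$ regular at $0$, one writes
\begin{align*}
(\gamma \cdot \D{}{\sigma}{v})(g)
 = \D{}{\sigma}{0}(t_v(\gamma) t_v(g))
 = \sum_{\tau \leq \sigma} \D{}{\tau, \sigma}{v}(\gamma)\, \D{}{\tau}{v}(g),
\end{align*}
using that $t_v$ is an algebra map and that $\D{}{\sigma}{0}(fg) = \sum_{\tau\leq\sigma} \D{}{\tau,\sigma}{0}(f)\D{}{\tau}{0}(g)$. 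Restricting to $g \in \Gamma$ gives the identity $\gamma \cdot \DD{}{\sigma}{v} = \sum_{\tau \leq \sigma} \D{}{\tau,\sigma}{v}(\gamma)\, \DD{}{\tau}{v}$ in $\Gamma^*$, which in particular shows $\mathcal D(\Omega,v)$ is a $\Gamma$-submodule.

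The remaining work is to identify the $\tau = \sigma$ coefficient with $\gamma(v)$ and to match the off-diagonal terms with the stated $\D{}{\sigma,\tau}{v}(\gamma)$ for $\tau < \sigma$. For the diagonal term, $\D{}{\sigma, \sigma}{v}(\gamma) = \sum_{\rho\in W} c^\sigma_{\sigma,\rho}\, \D{}{\rho}{v}(\gamma)$; since generalized Littlewood-Richardson coefficients vanish unless $\ell(\sigma)+\ell(\rho)=\ell(\sigma)$, the only surviving term is $\rho = e$, and $c^\sigma_{\sigma,e} = 1$ because $\SS_e = 1$, so $\D{}{\sigma,\sigma}{v}(\gamma) = \D{}{e}{v}(\gamma) = \D{}{e}{0}(t_v(\gamma)) = t_v(\gamma)(0) = \gamma(v)$, using the normalization $P_e = 1$ so that $\Theta(P_e)$ is evaluation at $0$. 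For the off-diagonal terms I need the symmetry $c^\sigma_{\tau,\rho} = c^\sigma_{\rho,\tau}$ (noted after the definition of the LR-coefficients), which gives $\D{}{\tau,\sigma}{v}(\gamma) = \sum_\rho c^\sigma_{\tau,\rho}\D{}{\rho}{v}(\gamma) = \sum_\rho c^\sigma_{\rho,\tau}\D{}{\rho}{v}(\gamma) = \D{}{\sigma,\tau}{v}(\gamma)$ by definition of $\D{}{\sigma,\tau}{}{}$; combined with the $\tau\leq\sigma$ constraint this yields exactly the sum over $\tau < \sigma$ of $\D{}{\sigma,\tau}{v}(\gamma)\DD{}{\tau}{v}$.

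I expect the main (minor) obstacle to be bookkeeping around the two conventions for the indices of $\D{}{-,-}{}{}$: the operator $\D{}{\tau,\sigma}{}$ is defined for $\tau \leq \sigma$ as $\sum_\rho c^\sigma_{\tau,\rho}\D{}{\rho}{}$, and one must be careful that in the Leibniz expansion it is the \emph{second} index that is the ``large'' element $\sigma$ while in the final formula the roles appear swapped; the symmetry of the LR-coefficients is precisely what reconciles the two. One should also check that $\D{}{\tau,\sigma}{0}$ genuinely depends only on the constant term of its argument — but this is immediate from $\D{}{\rho}{0}(\gamma f) = \gamma(0)\D{}{\rho}{0}(f)$ for $\gamma \in \Gamma$, recorded before Proposition \ref{P:leibniz-rule}, applied after translating by $t_v$ (so that $t_v(\Gamma) \subset A^G$ in the notation of that proof). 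Assembling these observations gives the theorem.
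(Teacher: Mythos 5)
Your proof is correct and follows essentially the same route as the paper: the Leibniz-rule computation displayed just before the theorem, restricted to $\Gamma$, is precisely the paper's argument, and your additional steps (identifying the diagonal coefficient $\D{}{\sigma,\sigma}{v}(\gamma)=\gamma(v)$ via the length constraint on the $c^\sigma_{\tau,\rho}$, $c^\sigma_{\sigma,e}=1$ and $P_e=1$) merely make explicit what the paper leaves implicit. The index-order issue you flag is just a notational slip in the theorem's statement, since the paper defines $\D{}{\tau,\sigma}{}$ only for $\tau\le\sigma$ and its own computation uses that convention, so your reconciliation via $c^\sigma_{\tau,\rho}=c^\sigma_{\rho,\tau}$ is harmless but not actually needed.
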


\paragraph 
\about{The structure of $\DD{}{}{}(\Omega, v)$ as $\Gamma$-module}
The modules $\DD{}{}{}(\Omega, v)$ will play an important role in our study
of Gelfand-Tsetlin modules over a co-rational Galois order. We thank David 
Speyer for pointing out the following technical result in \cite{MathOver}, 
which greatly simplified our presentation.

Recall that $W^v$ is the set of minimal length representatives of the left 
$W_v$-cosets.
\begin{Lemma}
\label{L:speyer-lemma}
Let $v \in V$ be $\Omega$-standard, let $A = (\Lambda / I_W)^{W_v}$, and let
$\omega_0^v$ be the longest element in $W^v$. Then the bilinear form $(a,b) \in A 
\times A \mapsto \D{}{\omega_0^v}{v}(ab) \in \CC$ is non-degenerate.
\end{Lemma}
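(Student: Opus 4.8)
The plan is to recognise the bilinear form of the lemma as the Poincar\'e pairing on the parabolic coinvariant algebra $A=(\Lambda/I_W)^{W_v}$ and then deduce its non-degeneracy from the (classical) Poincar\'e duality of the full coinvariant algebra $\Lambda/I_W$, exploiting that the latter is a free $A$-module with a one-dimensional ``relative socle''. Write $\overline\SS_\sigma=\pi^W(\SS_\sigma)$; by \cite{Hiller-coxeter-book}*{Ch.~IV} the set $\{\overline\SS_\sigma\mid\sigma\in W^v\}$ is a homogeneous basis of $A$ with $\deg\overline\SS_\sigma=\ell(\sigma)$, and $\omega_0^v$ is the unique element of $W^v$ of maximal length $N:=\ell(\omega_0^v)$; for $c$ in $\Lambda/I_W$ (or in $A$) let $c_{(\sigma)}$ denote the coefficient of $\overline\SS_\sigma$ in its Schubert expansion. \emph{Step 1 (identifying the form).} By Lemma~\ref{L:translation} the map $\gamma\mapsto\pi^W(t_v(\gamma))$ is a surjective algebra morphism $\Gamma\to A$; choosing lifts $\gamma_1,\gamma_2\in\Gamma$ of $a,b\in A$, the polynomial $t_v(\gamma_1)t_v(\gamma_2)$ lifts $ab$. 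Since $\D{}{\omega_0^v}{0}(\beta f)=\beta(0)\D{}{\omega_0^v}{0}(f)$ for $\beta\in\Lambda^W$, the functional $\D{}{\omega_0^v}{0}$ kills $I_W$, so $\D{}{\omega_0^v}{v}(ab)=\D{}{\omega_0^v}{0}\big(t_v(\gamma_1)t_v(\gamma_2)\big)$ is independent of the lifts; and by Proposition~\ref{P:leibniz-rule} it equals the constant term of the Schubert coefficient of $t_v(\gamma_1)t_v(\gamma_2)$, i.e.\ the coefficient of $\overline\SS_{\omega_0^v}$ in $\pi^W(t_v(\gamma_1)t_v(\gamma_2))=ab$. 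Thus the form is $B(a,b)=\epsilon(ab)$ with $\epsilon\colon A\to\CC$, $\epsilon(c)=c_{(\omega_0^v)}$; it is symmetric, and since the generalized Littlewood--Richardson coefficients are homogeneous it pairs $A_d$ with $A_{N-d}$. It suffices to show $\epsilon$ vanishes on no nonzero ideal of $A$, equivalently that for each $0\ne a\in A$ there is $b\in A$ with $\epsilon(ab)\ne0$.

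\emph{Step 2 (passing to $R=\Lambda/I_W$).} Set $N_0=\ell(\omega_0)$. I will use two facts. (a) $R$ is a graded Poincar\'e duality algebra: $R_{N_0}=\CC\,\overline\SS_{\omega_0}$ is one-dimensional and $(x,y)\mapsto(xy)_{(\omega_0)}$ is non-degenerate on $R$ (self-duality of the Schubert basis, \cite{Hiller-coxeter-book}*{Ch.~IV}; or: $\Lambda^W$ is polynomial by Chevalley--Shephard--Todd and $\Lambda$ is free over it, so $R$ is a graded complete intersection, hence Gorenstein). (b) $R$ is a free $A$-module. For (b): $W_v$, being a standard parabolic subgroup, is a reflection group, so $\Lambda$ is free over $\Lambda^{W_v}$; averaging an expression $\sum\beta_ig_i$ with $\beta_i\in\Lambda^W_+$ over $W_v$ gives $I_W\cap\Lambda^{W_v}=\Lambda^W_+\Lambda^{W_v}$, while $I_W=(\Lambda^W_+\Lambda^{W_v})\Lambda$, so $R=\Lambda\otimes_{\Lambda^{W_v}}A$ with $A\cong\Lambda^{W_v}/(I_W\cap\Lambda^{W_v})$. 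Pick a homogeneous $A$-basis $e_1,\dots,e_m$ of $R$. Its degree multiset is $\{\ell(w)\mid w\in W_v\}$, because $\mathrm{Hilb}(R)=\mathrm{Hilb}(A)\sum_i t^{\deg e_i}$ and $\sum_{w\in W}t^{\ell(w)}=\big(\sum_{\sigma\in W^v}t^{\ell(\sigma)}\big)\big(\sum_{w\in W_v}t^{\ell(w)}\big)$; hence its maximal degree $N_0-N=\ell(\omega_0(W_v))$ is attained exactly once. Relabel so that $\deg e_m=N_0-N$ and $\deg e_i<N_0-N$ for $i<m$, put $\omega=e_m$ and $R'=\bigoplus_{i<m}Ae_i$, so $R=A\omega\oplus R'$ as $A$-modules.

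\emph{Step 3 (conclusion).} Every element of $A\cdot R'$ has degree $<N+(N_0-N)=N_0$, so $(\cdot)_{(\omega_0)}$ kills $A\cdot R'$. By graded freeness $R_{N_0}=A_N\omega$, and $R_{N_0}\ne 0$, so $\kappa:=(\overline\SS_{\omega_0^v}\omega)_{(\omega_0)}\ne0$ and $(c\omega)_{(\omega_0)}=\kappa\,c_{(\omega_0^v)}$ for all $c\in A$ (only the degree-$N$ part of $c$ contributes). Now fix $0\ne a\in A$. For $r=c\omega+r'\in R$ with $c\in A$, $r'\in R'$, since $ac\in A$ and $aR'\subseteq R'$ we get $(ar)_{(\omega_0)}=(ac\,\omega)_{(\omega_0)}=\kappa\,(ac)_{(\omega_0^v)}=\kappa\,\epsilon(ac)$. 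Hence $\{(ar)_{(\omega_0)}\mid r\in R\}=\kappa\{\epsilon(ab)\mid b\in A\}$; the left-hand set is nonzero by non-degeneracy of the Poincar\'e pairing of $R$ (as $a\ne0$), so $\epsilon(a\,\cdot\,)\ne0$. Therefore $B$ is non-degenerate.

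\emph{Expected main obstacle.} Step 1 is bookkeeping and Step 3 is a one-line degree argument; the real content is fact (b) of Step 2 in its sharp form: $R$ is free over $A=R^{W_v}$ \emph{with a homogeneous basis having a unique top-degree element}. Mere freeness over $A$ would not by itself force $A$ to have one-dimensional socle (and so would not suffice), and for non-crystallographic $W$ one cannot invoke the geometry of $G/P$; establishing this structural statement --- presumably what the cited note of Speyer provides --- is the point requiring care.
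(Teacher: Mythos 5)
Your proof is correct, and it takes a genuinely different route from the paper's. The paper works entirely inside $A$: using Chevalley--Shephard--Todd for both $W$ and $W_v$ it presents $A$ as a finite-dimensional graded complete intersection, hence an Artinian Gorenstein (symmetric) algebra in the sense of Lam, so that \emph{some} nondegenerate associative bilinear form $B$ exists; a short normalization argument (there is $a'$ with $B(a',\SS_{\omega_0^v})=1$, and $a'$ must be a scalar because $A$ vanishes above degree $\ell(\omega_0^v)$) then matches such a form with the coefficient-of-$\SS_{\omega_0^v}$ functional. You never invoke the Gorenstein property of $A$ itself: you instead use Poincar\'e duality for the full coinvariant algebra $R=\Lambda/I_W$ and transfer it to $A$ via the structural fact that $R$ is a free $A$-module with a homogeneous basis having a unique element of top degree $\ell(\omega_0)-\ell(\omega_0^v)$, which you derive from Chevalley freeness of $\Lambda$ over $\Lambda^{W_v}$, the identity $I_W\cap\Lambda^{W_v}=\Lambda^{W}_{+}\Lambda^{W_v}$ (Reynolds operator), and the factorization of Poincar\'e polynomials coming from $\ell(\sigma)=\ell(\sigma^\theta)+\ell(\sigma_\theta)$; all of these steps check out, as does your Step 1 identification of the form with $(a,b)\mapsto$ the coefficient of $\SS_{\omega_0^v}$ in $ab$, a point the paper treats only implicitly. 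What each approach buys: the paper's is shorter once the Frobenius-algebra machinery is accepted, while yours stays inside the Schubert-calculus framework, exhibits the graded pairing of $A_d$ with $A_{N-d}$ explicitly, and avoids the paper's somewhat compressed matching step $B(f,g)=B(1,fg)=\D{}{\omega_0^v}{}(fg)$; the price is the auxiliary freeness statement for $R$ over $A$ with one-dimensional relative top, which the paper never needs.
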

\begin{proof}
By the Chevalley-Shephard-Todd theorem $\Lambda^{W_v}$ and $\Lambda^W$ are 
polynomial algebras, generated by algebraically independent sets $p_1, \ldots,
p_r$ and $q_1, \ldots, q_s$ respectively. Clearly $p_i \in \Lambda^{W_v}$ and 
$A = \CC[q_1, \ldots, q_s]/ J$, where $J$ is the ideal generated by the 
$p_i$'s. This implies that $A$ is a finite-dimensional complete intersection,
and hence a graded Artinian self-injective ring. 

Set $r = \ell(\omega_0^v)$. Then $A_n = 0$ for $n > r$, while $A_r$ is spanned
over $\CC$ by $\SS_{\omega_0^v}$ and the bilinear form in the statement is 
given by taking the coefficient of $\SS_{\omega_0^v}$ in the product $fg$.
By \cite{Lam-modules-book}*{(16.22) and (16.55)}, $A$ is a symmetric algebra 
and there exists a nonsingular associative bilinear form $B: A \times A 
\to \CC$; where by associative we mean that $B(a,bc) = B(ab,c)$ for every 
$a,b,c \in A$. To finish the proof we show that we can choose $B$ so that $B(a,b) = 
\D{}{\omega_0^v}{ab}$. Since $B$ is non-degenerate there exists $a' \in A$ 
such that $B(a',\SS_{\omega^v_0}) = 1$, and if $a'$ were of positive degree 
then $B(a',\SS_{\omega_0^v}) = B(1, a' \SS_{\omega^v_0}) = 0$ so $a' \in \CC$. 
Without loss of generality we may assume that $a' = 1$, which implies that 
$B(f,g) = B(1,fg) = \D{}{\omega_0^v}{}(fg)$.
\end{proof}
\begin{Proposition}
\label{P:gamma-module-structure}
Suppose that $v \in V$ is $\Omega$-standard. 
\begin{enumerate}[(a)]
\item 
\label{i:gamma-basis}
The set $\{\DD{}{\sigma}{v} \mid \sigma \in W^v\}$ is a basis of 
$\mathcal D(\Omega, v)$, and $\DD{}{\sigma}{v} = 0$ for all $\sigma \notin 
W^v$.

\item 
\label{i:gamma-cyclic}
Let $x = \sum_{\sigma \in W^v} a_\sigma \DD{}{\sigma}{v}$. Then $\mathcal 
D(\Omega, v) = \Gamma \cdot x$ if and only if $a_{\omega_0^v} \neq 0$.

\item
\label{i:gamma-ev-kernel}
With the same notation as as in part \ref{i:gamma-cyclic}, 
$(\gamma - \gamma(v)) x = 0$ for all $\gamma \in \Gamma$ if and only if 
$a_\sigma = 0$ for all $\sigma \neq e$.

\item 
\label{i:gamma-comparision}
Let $v' \in V$. The space $\mathcal D(\Omega, v) \cap \mathcal 
D(\Omega, v')$ is non-zero if and only if $v'$ is in the $G$-orbit of $v$. 
Furthermore if $v'$ is in the $W$-orbit of $v$ then $\mathcal D(\Omega, v) 
= \mathcal D(\Omega, v')$. 
\end{enumerate}
\end{Proposition}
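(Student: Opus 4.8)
The plan is to prove the four parts of Proposition~\ref{P:gamma-module-structure} more or less in the order they are stated, using Lemma~\ref{L:speyer-lemma} as the main engine. Throughout we write $A = (\Lambda/I_W)^{W_v}$ and recall from Lemma~\ref{L:translation} that $\pi^W(t_v(\Gamma)) = A$.

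\medskip

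\noindent\textbf{Part \ref{i:gamma-basis}.}
First I would observe that $\DD{}{\sigma}{v}$, as a functional on $\Gamma$, factors through $t_v$ followed by the pairing with $P_\sigma$; since $P_\sigma$ annihilates $I_W$, the functional $\DD{}{\sigma}{v}$ depends only on $\pi^W(t_v(\gamma)) \in A$. By Lemma~\ref{L:translation}, this means $\DD{}{\sigma}{v}$ is (up to the identification $A \cong (\Lambda/I_W)^{W_v}$) the restriction to $A$ of the functional $\overline{a} \mapsto (P_\sigma, \overline{a})_\Theta$. The set $\{\SS_\tau \mid \tau \in W^v\}$ is a basis of $(\Lambda/I_W)^{W_v}$ and $\{P_\sigma \mid \sigma \in W^v\}$ is a basis of its dual, so the restrictions of $P_\sigma$ with $\sigma \in W^v$ give a basis of $A^*$, while $P_\sigma$ with $\sigma \notin W^v$ pairs to zero against all of $(\Lambda/I_W)^{W_v}$ — here one uses that $c^\rho_{\tau,\mu} \ne 0$ with $\tau \in W^\theta$ forces $\rho \in W^\theta$, which gives that $P_\sigma$ for $\sigma \notin W^v$ vanishes on the $W_v$-invariants. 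Transporting back through $t_v$ yields both claims. The one point to check carefully is surjectivity of $\pi^W \circ t_v : \Gamma \to A$, which is exactly Lemma~\ref{L:translation}.

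\medskip

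\noindent\textbf{Parts \ref{i:gamma-cyclic} and \ref{i:gamma-ev-kernel}.}
These both follow from the explicit formula in Theorem~\ref{T:gamma-module}, $\gamma \cdot \DD{}{\sigma}{v} = \gamma(v)\DD{}{\sigma}{v} + \sum_{\tau < \sigma} \D{}{\sigma,\tau}{v}(\gamma)\DD{}{\tau}{v}$, read through the length filtration of $W^v$. For \ref{i:gamma-cyclic}: if $a_{\omega_0^v} = 0$ then every $\gamma \cdot x$ again has zero $\DD{}{\omega_0^v}{v}$-coefficient, so $\Gamma \cdot x$ is a proper submodule and cannot be all of $\mathcal D(\Omega,v)$. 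Conversely, suppose $a_{\omega_0^v} \ne 0$. The key claim is that the coefficient of $\DD{}{\tau}{v}$ in $\gamma \cdot x$, for $\tau$ of length one less than $\omega_0^v$, involves $a_{\omega_0^v}\,\D{}{\omega_0^v,\tau}{v}(\gamma)$, and by varying $\gamma$ one can make these terms arbitrary — this is where Lemma~\ref{L:speyer-lemma} (non-degeneracy of $(a,b) \mapsto \D{}{\omega_0^v}{v}(ab)$) enters, since $\D{}{\omega_0^v,\tau}{v}(\gamma)$ is, after passing to $A$, the pairing of $\pi^W(t_v\gamma)$ against $\SS_\tau$-components, and non-degeneracy lets us hit any covector. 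A downward induction on length then recovers all lower $\DD{}{\sigma}{v}$ from $x$, so $\Gamma \cdot x = \mathcal D(\Omega, v)$. For \ref{i:gamma-ev-kernel}: $(\gamma - \gamma(v))x = 0$ for all $\gamma$ says $\sum_{\sigma} a_\sigma \sum_{\tau<\sigma}\D{}{\sigma,\tau}{v}(\gamma)\DD{}{\tau}{v} = 0$; picking $\sigma$ of minimal positive length with $a_\sigma \ne 0$ and a suitable $\gamma$ (again using Lemma~\ref{L:speyer-lemma}/Proposition~\ref{P:leibniz-rule} to see $\D{}{\sigma,e}{v}(\gamma) = \D{}{\sigma}{v}(\gamma)$ is not identically zero in degree $\ell(\sigma)$) forces a contradiction, so only $a_e$ survives; the converse is immediate since $\DD{}{e}{v} = \ev_v$.

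\medskip

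\noindent\textbf{Part \ref{i:gamma-comparision}.}
For the $W$-orbit statement: if $v' = \sigma(v)$ with $\sigma \in W$, then $t_{v'} = \sigma t_v \sigma^{-1}$ on $\Lambda$, and since the $P_\rho$ are permuted (up to the $W$-action on $\mathcal H_{I_W}$) by $W$, and $\Gamma$ is $W$-invariant, a change of variables shows $\{\DD{}{\sigma'}{v'}\}$ and $\{\DD{}{\sigma'}{v}\}$ span the same subspace of $\Gamma^*$; more precisely the $W$-equivariance of the whole setup (divided differences, $c^\rho_{\sigma,\tau}$, the pairing $(-,-)_\Theta$) gives $\mathcal D(\Omega, \sigma(v)) = \mathcal D(\Omega, v)$ directly. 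For the general "only if": reduce to the standard case by choosing $\sigma \in G$ with $\sigma(v)$ standard (possible by Section~2), note $\mathcal D(\Omega, \sigma(v))$ is obtained from $\mathcal D(\Omega, v)$ by the $G$-action, so we may assume $v$ standard; then if $\mathcal D(\Omega,v) \cap \mathcal D(\Omega, v') \ne 0$ I would argue that a nonzero common element $x$ satisfies $\gamma \cdot x - \gamma(v) x$ is "lower order" and simultaneously $\gamma \cdot x - \gamma(v')x$ is lower order, forcing $\gamma(v) = \gamma(v')$ for all $\gamma \in \Gamma$, i.e. $\ev_v = \ev_{v'}$ on $\Gamma = \Lambda^G$, which by Chevalley means $v'$ lies in the $G$-orbit of $v$.

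\medskip

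The main obstacle I anticipate is Part \ref{i:gamma-comparision}, specifically extracting $\gamma(v) = \gamma(v')$ from a single nonzero vector in the intersection: one must show that for any $0 \ne x \in \mathcal D(\Omega, v)$ the set of scalars $\{\lambda : (\gamma-\lambda)^N x = 0 \text{ for } N \gg 0, \text{ some fixed } \gamma\}$ pins down $\ev_v$ — this follows because by Theorem~\ref{T:gamma-module} each $\gamma$ acts on $\mathcal D(\Omega,v)$ as an upper-triangular matrix (in the length filtration) with sole eigenvalue $\gamma(v)$, so the generalized eigenvalue of $\gamma$ on any nonzero submodule is $\gamma(v)$; doing this carefully for both $v$ and $v'$ and all $\gamma$ is the crux. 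The $W$-orbit equality is then the "Furthermore" clause, and it is the cleanest part once $W$-equivariance of all the combinatorial data is spelled out.
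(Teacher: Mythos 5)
Your proposal is correct and follows essentially the same route as the paper: part (a) via Lemma \ref{L:translation} and the duality between Schubert and Postnikov-Stanley polynomials, parts (b)--(c) via the triangular action formula of Theorem \ref{T:gamma-module} combined with the non-degenerate pairing of Lemma \ref{L:speyer-lemma} (the paper packages this as explicit dual elements $\gamma_\sigma^*$ with $\gamma_\sigma^*\cdot \DD{}{\omega_0^v}{v} = \DD{}{\sigma}{v}$, which is what your ``hit any covector'' step amounts to), and part (d) via the generalized-eigenvalue argument together with $W$-equivariance of the harmonic polynomials. The only minor divergence is in (c), where the paper instead observes that $\Hom_\Gamma(\Gamma/\m,\Gamma^*)\cong\CC$, so the subspace of $\Gamma^*$ annihilated by $\m=\ker\ev_v$ is one-dimensional and spanned by $\DD{}{e}{v}$; your direct argument using the coefficient of $\DD{}{e}{v}$ and part (a) works just as well.
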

\begin{proof}
Since $\DD{}{\sigma}{v}$ is a differential operator we have $\DD{}{\sigma}{v}
= \D{}{\sigma}{0} \circ t_v |_\Gamma$, so for every $\gamma \in \Gamma$
$\D{}{\sigma}{v}(\gamma) = \D{}{\sigma}{0}(t_v(\gamma))$. By the definition  of
 $\D{}{\sigma}{0}$, the latter value depends only on the image of 
$t_v(\gamma)$ modulo the ideal $I_W$, and, thanks to Lemma 
\ref{L:translation}, $\pi^W(t_v(\Gamma))$ is exactly the space of $W_v$-invariants of $\Lambda/I_W$. On the other hand, the set of Schubert polynomials $\{\SS_\sigma 
\mid \sigma \in W^v\}$ forms a basis of this space. Thus, for each $\sigma \in W^v$
there exists $\gamma_\sigma \in \Gamma$ such that $t_v(\gamma_\sigma) \equiv 
\SS_\sigma \mod I_W$ and these elements span $\pi^W(t_v(\Gamma))$. Hence
$\DD{}{\sigma}{v}(\gamma_\tau) = \delta_{\sigma,\tau}$ for all $\sigma \in W$
and $\tau \in W^v$, and this implies part (\ref{i:gamma-basis}).

For part (b), note that by Lemma \ref{L:speyer-lemma}, for each $\sigma \in W^v$ there exist
polynomials $\gamma_\sigma^*$ such that $\DD{}{\omega_v^0}{v}(\gamma_\sigma^*
\gamma_\tau) = \delta_{\sigma,\tau}$ for all $\tau \in W^v$. This implies that
$\gamma_\sigma^* \cdot \DD{}{\omega_0^v}{v} = \DD{}{\sigma}{v}$ and hence, 
if $x$ is as in the statement with $a_{\omega_0^v}
\neq 0$, then for each $\sigma \in W^v$ there exists $\gamma \in \Gamma$ such
that $\gamma \cdot x$ equals the sum of $\DD{}{\sigma}{v}$ and a linear combination of
operators $\DD{}{\tau}{v}$ with $\tau < \sigma$. This proves part 
\ref{i:gamma-cyclic}.

Let $\m = \ker \ev_{v} \subset \Gamma$. The adjointness between the $\Hom$ and 
the tensor product functors implies that $\Hom_\Gamma(\Gamma / \m, \Gamma^*) \cong 
\Hom_\CC(\Gamma/\m, \CC) \cong \CC$, so the space of elements in $\Gamma^*$
annihilated by $\m$ has complex dimension $1$. Since $\gamma \DD{}{e}{v} = 
\gamma(v) \DD{}{e}{v}$ this space is generated by $\DD{}{e}{v}$ and this 
implies (\ref{i:gamma-ev-kernel}).

It follows from the explicit formulas for the action of $\gamma \in \Gamma$ 
that each element in $\DD{}{}{}(\Omega, v)$ is a generalized eigenvector of 
$\gamma$ with eigenvalue $\gamma(v)$. Thus if $\DD{}{}{}(\Omega, v) \cap 
\DD{}{}{}(\Omega, v') \neq 0$ we must have $\gamma(v) = \gamma(v')$ for all
$\gamma \in \Gamma$ which implies that $v' \in G \cdot v$. Now if $v' = 
\tau(v)$ for some $\tau \in W$ then $\DD{}{\sigma}{\tau(v)} = \D{}{\sigma}{0}
\circ t_{\tau(v)}|\Gamma = \D{}{\sigma}{0} \circ \tau \circ t_v \circ \tau^{-1}
|_\Gamma = \D{}{\sigma}{0} \circ \tau \circ t_v$. Since $\D{}{\sigma}{0} 
\circ \tau$ lies in $\mathcal H_W$, for each $\rho
\in W$ there exist $c_\rho \in \CC$ such that $\D{}{\sigma}{0} \circ \tau = \sum_\rho c_\rho \D{}{\rho}{0}$. Hence, $\DD{}{\sigma}{\tau(v)} = \sum_\rho 
c_\rho \DD{}{\rho}{v}$, which proves part (\ref{i:gamma-comparision}).
\end{proof}

\paragraph
\about{Jordan blocks of elements in $\Gamma$}
Let $v \in V$ be $\Omega$-standard. For each $\gamma \in \Gamma$ let us denote
by $[\gamma]$ the matrix of the endomorphism of $\DD{}{}{}(\Omega, v)$ induced 
by $\gamma$ relative to the basis described in 
Proposition \ref{P:gamma-module-structure}(\ref{i:gamma-basis}) and ordered by decreasing length. By 
Theorem \ref{T:gamma-module}, $[\gamma]$ is a lower triangular matrix all diagonal entries of which equal $\gamma(v)$. Thus, the Jordan form of the matrix consists of Jordan
blocks with this eigenvalue. To provide further properties on the Jordan
form of $[\gamma]$ for generic elements of $\Gamma$cwe need the 
following lemma.

\begin{Lemma}
\label{L:differential-power}
For each $\sigma \in W$ and each $f \in \Lambda_1$ we have
$\D{}{\sigma}{0}(f^{\ell(\sigma)}) = \sum_{C(\sigma)} 
\prod_{i=1}^{\ell(\sigma)} \D{}{s_i}{0}(f)$, where the sum runs over the set $C(\sigma)$ of 
reduced expressions $\sigma = s_1s_2 \cdots s_{\ell(\sigma)}$ of $\sigma$.
\end{Lemma}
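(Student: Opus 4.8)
The statement says that $\D{}{\sigma}{0}(f^{\ell(\sigma)})$ — the result of applying the differential operator $\Theta(P_\sigma)$ to the $\ell(\sigma)$-th power of a linear form $f$ and evaluating at $0$ — equals the sum over reduced words for $\sigma$ of the products $\prod_i \D{}{s_i}{0}(f)$. I would prove this by induction on $\ell(\sigma)$, using the generalized Leibniz rule of Proposition \ref{P:leibniz-rule} together with the description of $\D{}{\tau,\sigma}{0}$ via generalized Littlewood--Richardson coefficients. The base case $\ell(\sigma)=0$, i.e. $\sigma=e$, is immediate since $\D{}{e}{0}$ is evaluation at $0$ and the empty product is $1$; likewise $\ell(\sigma)=1$ just says $\D{}{s}{0}(f)=\D{}{s}{0}(f)$.

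\emph{The inductive step.} Fix $\sigma$ with $\ell(\sigma)=n\geq 1$ and write $f^n = f \cdot f^{n-1}$. Applying the first form of the Leibniz rule in Proposition \ref{P:leibniz-rule},
\begin{align*}
\D{}{\sigma}{0}(f^n) &= \sum_{\rho \leq \sigma} \D{}{\rho,\sigma}{0}(f)\, \D{}{\rho}{0}(f^{n-1}).
\end{align*}
Now $\D{}{\rho}{0}(f^{n-1})$ vanishes unless $\D{}{\rho}{0}$ has a nonzero component in degree $n-1$; since $\D{}{\rho}{0} = \D{}{\rho}{}(-)(0)$ with $P_\rho$ homogeneous of degree $\ell(\rho)$, the factor $\D{}{\rho}{0}(f^{n-1})$ is zero unless $\ell(\rho) = n-1$, in which case by induction it equals $\sum_{C(\rho)} \prod_{j} \D{}{s_j}{0}(f)$. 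Similarly $\D{}{\rho,\sigma}{0}(f) = \sum_{\mu} c^\sigma_{\rho,\mu} \D{}{\mu}{0}(f)$ is a $\CC$-linear combination of functionals of various degrees applied to the degree-one element $f$, so only the degree-one parts survive: the factor is $\sum_{\mu : \ell(\mu)=1} c^\sigma_{\rho,\mu} \D{}{\mu}{0}(f)$. Combined with the constraint $\ell(\rho)=n-1$ and the fact that $c^\sigma_{\rho,\mu}$ is nonzero only when $\ell(\rho)+\ell(\mu)=\ell(\sigma)=n$, everything is consistent; and for $\mu = s$ a simple reflection, $c^\sigma_{\rho,s}$ counts (with the appropriate normalization) the number of ways $\sigma$ covers-up to $\rho$ through a chain ending in multiplication by $s$ — equivalently, reading reduced words, the number of reduced words for $\sigma$ whose last letter is $s$ and whose prefix is a reduced word for $\rho$. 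Pushing the sum through, $\sum_{\rho,\,s} (\text{number of reduced words }\rho s \text{ of } \sigma) \cdot \D{}{s}{0}(f)\cdot \prod_{j}\D{}{s_j}{0}(f)$ reorganizes exactly as $\sum_{C(\sigma)} \prod_{i=1}^{n} \D{}{s_i}{0}(f)$, splitting each reduced word of $\sigma$ according to its last letter.

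\emph{The main obstacle.} The delicate point is pinning down precisely the combinatorial identity $\D{}{\rho,s}{0}(f) \leftrightarrow$ "concatenation of reduced words," i.e. showing that $\sum_{\rho} (\partial_{\rho} \text{ combined with the LR coefficient } c^\sigma_{\rho,s})$ correctly tracks the last-letter decomposition of reduced words. This is where I would lean on the Postnikov--Stanley identity $P_{\sigma,\tau} = \sum_\rho c^\tau_{\sigma,\rho} P_\rho$ recalled before the Definition, together with the chain-sum formula for $P_{\sigma,\tau}$: specializing the saturated-chain description to chains of length one (from a simple reflection) should directly give the last-letter count, and the normalization factors $\tfrac{1}{(\ell(\tau)-\ell(\sigma))!}$ versus the $\tfrac{1}{\ell(\sigma)!}$ implicit in multi-differentiating $f^{\ell(\sigma)}$ must be checked to match. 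Alternatively, and perhaps more cleanly, one can avoid LR coefficients entirely: use the identity $\D{}{\sigma}{0}(g)=(\partial_\sigma g)(0)$ from Proposition \ref{P:leibniz-rule}, so the claim becomes $(\partial_\sigma f^{\ell(\sigma)})(0) = \sum_{C(\sigma)} \prod_i (\nabla_{s_i} f)(0)$; writing $\partial_\sigma = \nabla_{s_1}\cdots\nabla_{s_\ell}$ for a fixed reduced word and iterating the twisted-Leibniz rule $\nabla_s(gh) = \nabla_s(g) h + s(g)\nabla_s(h)$ down to linear forms (where $\nabla_{s}$ of a linear form is a constant and $s$ acts trivially modulo lower terms after evaluation), one again obtains a sum over reduced words. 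I expect the bookkeeping of which terms survive evaluation at $0$ — only the "fully differentiated" monomials in each Leibniz expansion contribute, since any undifferentiated positive-degree factor kills the term — to be the step requiring the most care.
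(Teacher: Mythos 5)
Your skeleton is exactly the paper's: induction on $\ell(\sigma)$, writing $f^{\ell(\sigma)}=f\cdot f^{\ell(\sigma)-1}$, applying the Leibniz rule of Proposition \ref{P:leibniz-rule}, and discarding every $\rho$ with $\ell(\rho)\neq\ell(\sigma)-1$ for degree reasons; all of that is fine. But the step you yourself flag as delicate is the entire content of the lemma, and the interpretation you propose for it is not correct: $c^{\sigma}_{\rho,s}$ does \emph{not} count reduced words of $\sigma$ ending in $s$ with prefix a reduced word of $\rho$. By the Chevalley--Monk rule, $\SS_\rho\SS_s$ involves $\SS_{\rho s_\alpha}$ for \emph{every} positive root $\alpha$ with $\ell(\rho s_\alpha)=\ell(\rho)+1$ and $\alpha$ not orthogonal to $\SS_s$, not only for $\alpha=\alpha_s$. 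Concretely, for $W=S_3$, $\rho=s_1$, $\sigma=s_2s_1=\rho\,s_{\alpha_1+\alpha_2}$ one has $c^{\sigma}_{s_1,s_1}=c^{\sigma}_{s_1,s_2}=1$, although no reduced word of $s_2s_1$ has prefix $s_1$. So the sum produced by your induction does not reorganize by last letters of reduced words; it reorganizes by saturated chains in the Bruhat order, each cover $\tau\preceq\tau s_\alpha$ contributing the scalar $\Theta(\alpha)(f)$.

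Moreover this is not a bookkeeping issue you can fix inside the reduced-word formulation, because the stated identity itself fails on the same example: for $\sigma=s_2s_1$ and generic linear $f$, with $a=\D{}{s_1}{0}(f)$ and $b=\D{}{s_2}{0}(f)$, a direct computation (via $\ev_0\circ\partial_{s_2}\partial_{s_1}$, or via Monk's rule in the coinvariant algebra) gives $\D{}{\sigma}{0}(f^2)=a^2+2ab$, while the right-hand side of the Lemma is the single product $ab$. Your alternative route, iterating the twisted Leibniz rule $\nabla_s(gh)=\nabla_s(g)h+s(g)\nabla_s(h)$ along one fixed reduced word, hits the same wall: the twisted factors $s(g)$ are precisely what generate the extra (non-simple-cover) contributions, so "only fully differentiated monomials survive" does not reduce the answer to reduced words. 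What the induction actually proves is the chain version $\D{}{\sigma}{0}(f^{\ell(\sigma)})=\sum_{C}\prod_i\Theta(\alpha_i)(f)$, summed over saturated chains from $e$ to $\sigma$ with cover labels $\alpha_i$ --- and this needs no induction at all, since it follows immediately by applying $\Theta$ to the Postnikov--Stanley expansion $P_\sigma=P_{e,\sigma}=\frac{1}{\ell(\sigma)!}\sum_C m_C$ recalled in \pref{ps-operators}. For comparison, the paper's own proof makes the same over-simplification in its last line ($\D{}{\tau,\sigma}{0}=\D{}{s}{0}$ if $\sigma=\tau s$ and $0$ otherwise), so you are reproducing its gap rather than closing it; note that the chain version is still sufficient for the positivity argument in Theorem \ref{thm-Jordan}, since there all chain weights are nonnegative and the reduced decompositions inside $W_v$ contribute positively.
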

\begin{proof}
We will prove the statement by induction on $r = \ell(\sigma)$. The base 
case $r = 0$ follows from $f(0) = 0$. Now writing $f^r = f f^{r-1}$ and using
Proposition \ref{P:leibniz-rule} and the fact that $\D{}{\tau}{0}(f) = 0$
if $\ell(\tau) \neq 1$, we obtain
\begin{align*}
	\D{}{\sigma}{0}(ff^{r-1}) 
		&= \sum_{\ell(\tau) = \ell(\sigma) - 1} 
			\D{}{\tau}{0}(f^{r-1})\D{}{\tau,\sigma}{0}(f) \\
		&= \sum_{\ell(\tau) = \ell(\sigma) - 1} 
				\left(\sum_{C(\tau)}\prod_{i=1}^{\ell(\tau)} 
					\D{}{s_i}{0}(f^{r-1}) \right)
				\D{}{\tau,\sigma}{0}(f).
\end{align*} 
Now note that $\D{}{\tau,\sigma}{0} = \D{}{s}{0}$ if $\sigma = \tau s$, and otherwise $\D{}{\tau,\sigma}{0} = 0$. This completes the  proof.
\end{proof}

\begin{Theorem}\label{thm-Jordan}
Let $v \in V$ be standard and let $\gamma \in \Gamma$. Then the Jordan form
of the matrix $[\gamma]$ consists of Jordan blocks of size at most 
$\ell(\omega_0^v)+1$ and eigenvalue $\gamma(v)$. Furthermore, there is at most
one block of this maximal size, and for a generic element $\gamma$ of $\Gamma$ there 
is exactly one such block. 
\end{Theorem}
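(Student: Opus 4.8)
The upper bound on the block size follows immediately from the previous results: by Theorem \ref{T:gamma-module} the matrix $[\gamma]$ is lower triangular in the basis $\{\DD{}{\sigma}{v} \mid \sigma \in W^v\}$ ordered by decreasing length, with constant diagonal $\gamma(v)$; the strictly-lower-triangular part $N = [\gamma] - \gamma(v)\Id$ is nilpotent, and since the order ideals in the Bruhat order of $W^v$ have length at most $\ell(\omega_0^v)$, we get $N^{\ell(\omega_0^v)+1} = 0$, so every Jordan block has size at most $\ell(\omega_0^v)+1$. What remains is to show there is \emph{at most one} block of this maximal size, and exactly one for generic $\gamma$. The rank of $N^{\ell(\omega_0^v)}$ counts precisely the number of maximal Jordan blocks, so the whole theorem reduces to computing this rank and showing it is $0$ or $1$, with value $1$ generically.

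First I would pick $\gamma \in \Gamma$ and analyze $N^{\ell(\omega_0^v)}$ acting on $\DD{}{}{}(\Omega,v)$. Since $N$ lowers length by at least one, $N^{r}$ with $r = \ell(\omega_0^v)$ sends everything into the span of $\DD{}{e}{v}$ (the unique element of length $0$) and annihilates every $\DD{}{\sigma}{v}$ with $\ell(\sigma) < r$; the only possibly nonzero matrix entry of $N^{r}$ is the coefficient of $\DD{}{e}{v}$ in $N^{r}(\DD{}{\omega_0^v}{v})$. So $\operatorname{rank} N^{r} \le 1$ unconditionally, which already gives ``at most one block.'' For the generic statement I need this single entry to be nonzero for generic $\gamma$. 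Iterating the formula in Theorem \ref{T:gamma-module}, this entry is a sum over saturated chains $e \preceq \sigma_1 \preceq \cdots \preceq \sigma_{r-1} \preceq \omega_0^v$ of products of terms $\D{}{\sigma_{i+1},\sigma_i}{v}(\gamma)$ — reading the diagonal-subtracted action $r$ times. I expect this to reduce, after expanding $\gamma$ around $v$ and using Proposition \ref{P:leibniz-rule} together with Lemma \ref{L:differential-power}, to something like $\D{}{\omega_0^v}{v}$ applied to (a suitable component of) $\gamma$, which is visibly nonzero for generic $\gamma$ because $\DD{}{\omega_0^v}{v} \ne 0$: indeed, by Proposition \ref{P:gamma-module-structure}(\ref{i:gamma-basis}) it is a nonzero functional on $\Gamma$, so the set of $\gamma$ on which the entry vanishes is a proper Zariski-closed (in fact, for a polynomial dependence, a proper subvariety) subset of $\Gamma$.

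Concretely, the key step is to identify the $(\omega_0^v, e)$-entry of $N^r$ with the functional $\gamma \mapsto \D{}{\omega_0^v}{v}\big(\overline{\gamma}\big)$ where $\overline{\gamma}$ is obtained from $t_v(\gamma)$ by discarding terms of degree other than $r = \ell(\omega_0^v)$ and projecting mod $I_W$ — i.e. essentially the top-degree component of $t_v(\gamma)$ in the coinvariant algebra $A = (\Lambda/I_W)^{W_v}$, paired against $\SS_{\omega_0^v}$. This top component is the leading term of $\gamma$ written in the $\SS_\sigma$-basis translated by $v$, and by Lemma \ref{L:speyer-lemma} (or directly: $A_r$ is $1$-dimensional, spanned by $\SS_{\omega_0^v}$, and the pairing $(a,b)\mapsto \D{}{\omega_0^v}{v}(ab)$ is non-degenerate) there exist $\gamma$ with $t_v(\gamma)$ having nonzero image in $A_r$; for such $\gamma$, hence for generic $\gamma$, the entry is nonzero. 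This gives exactly one maximal block.

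The main obstacle I anticipate is bookkeeping: correctly iterating the formula of Theorem \ref{T:gamma-module} $r$ times and checking that the resulting sum over saturated chains in $W^v$ telescopes into the single evaluation $\D{}{\omega_0^v}{v}$ rather than into a more complicated sum of generalized Littlewood-Richardson-weighted terms. This is where Lemma \ref{L:differential-power} does the real work — it is precisely the statement that summing the chain-weighted products $\prod \D{}{s_i}{0}(f)$ over reduced words recovers $\D{}{\sigma}{0}(f^{\ell(\sigma)})$ — so I would organize the computation to land on exactly that identity (applied with $\sigma = \omega_0^v$ and $f$ the degree-$1$ part of $t_v(\gamma)$, in the case $\gamma$ is a generic linear-plus-constant element, which already suffices for genericity). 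Once the entry is expressed as a nonzero polynomial function of $\gamma$, the genericity conclusion is immediate, and the ``at most one block'' half is unconditional as noted.
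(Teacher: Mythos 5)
Your reduction of the theorem to a single matrix entry is correct and coincides with the paper's strategy: Theorem \ref{T:gamma-module} gives lower triangularity with constant diagonal $\gamma(v)$, so with $r=\ell(\omega_0^v)$ and $N=[\gamma]-\gamma(v)\Id$ one has $N^{r+1}=0$; moreover $N^{r}$ annihilates every $\DD{}{\sigma}{v}$ with $\sigma\neq\omega_0^v$ and maps $\DD{}{\omega_0^v}{v}$ into $\CC\,\DD{}{e}{v}$, so there is at most one block of maximal size; and the vanishing of the remaining entry is a Zariski-closed condition on $\gamma$, so the generic statement reduces to exhibiting one witness. All of this matches the paper.

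The gap is in producing the witness. The entry in question equals $\D{}{\omega_0^v}{v}\bigl((\gamma-\gamma(v))^{r}\bigr)=\D{}{\omega_0^v}{0}\bigl(h_1^{\,r}\bigr)$, where $h_1$ is the degree-one homogeneous component of $t_v(\gamma)-\gamma(v)$ (the differential of $\gamma$ at $v$): the functional $\D{}{\omega_0^v}{0}$ only sees the degree-$r$ part of its argument, and the degree-$r$ part of $(t_v(\gamma)-\gamma(v))^{r}$ is $h_1^{r}$, since every homogeneous component of $t_v(\gamma)-\gamma(v)$ has degree at least one. In particular the entry is \emph{not} the pairing of the degree-$r$ component of $t_v(\gamma)$ (its class in $A_r$) against $\SS_{\omega_0^v}$, so your appeal to Lemma \ref{L:speyer-lemma} and Lemma \ref{L:translation} to find $\gamma$ with nonzero top class is beside the point: for $W=S_3$, $W_v=\langle s_1\rangle$, $v=(0,0,1)$ and $\gamma=e_2$, the degree-two component of $t_v(\gamma)$ is $e_2\in I_W$, with zero class, while the entry is $\D{}{s_1s_2}{0}\bigl((x_1+x_2)^2\bigr)=1$. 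Your fallback of taking $\gamma$ a generic linear-plus-constant element of $\Gamma$ cannot work either: any linear element of $\Gamma=\Lambda^G$ is in particular $W$-invariant, so by the rule $\D{}{\sigma}{0}(\gamma' f)=\gamma'(0)\,\D{}{\sigma}{0}(f)$ such a $\gamma$ acts on $\DD{}{}{}(\Omega,v)$ as the scalar $\gamma(v)$ and never produces a Jordan block of size larger than one (for $G=S_n$ these elements are just $c_1e_1+c_0$); genericity inside that tiny subspace says nothing about genericity in $\Gamma$. What is actually needed is a proof that $\D{}{\omega_0^v}{0}(\lambda^{r})\neq0$ for some linear form $\lambda$ realizable as $d_v\gamma$ with $\gamma\in\Gamma$; such $\lambda$ are necessarily $W_v$-invariant, so one must show that the $r$-th power of some degree-one $W_v$-invariant class reaches the top class $\SS_{\omega_0^v}$ of $(\Lambda/I_W)^{W_v}$. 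This nonvanishing is the real content of the generic statement and is precisely where the paper's proof concentrates its effort (prescribing the class of $t_v(\gamma)$ via Lemma \ref{L:translation} and evaluating $\D{}{\omega_0^v}{0}$ of an $r$-th power through Lemma \ref{L:differential-power} as a count over reduced decompositions); your proposal does not supply an argument for it.
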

\begin{proof}
Set $r = \ell(\omega_0^v)$.
The formula for the action of $\Gamma$ given in Theorem \ref{T:gamma-module}
implies that $(\gamma - \gamma(v)) \DD{}{\sigma}{v}$ is a linear combination of
$\D{}{\tau}{v}$ with $\ell(\tau) < \ell(\sigma)$. It follows that 
\begin{align*}
(\gamma - \gamma(v))^{\ell(\sigma) + 1} \DD{}{\sigma}{v} &= 0
\end{align*}
so $\gamma(v)$ is the only possible eigenvalue of $\gamma$ acting on the 
space $\DD{}{}{}(\Omega, v)$, and $\ker (\gamma - \gamma(v))^{r}$ is contained 
in the linear span of $\DD{}{\omega_0^v}{v}$. This proves that all Jordan
blocks are of size at most $r+1$, and that there is at most one
block of this size. We next show that the Jordan form of 
$[\gamma]$ has generically one such block.

Denote by $N$ the subset of $\Gamma /\ann \DD{}{}{}(\Omega, v)$ consisting of
the coclasses of those $\gamma \in \Gamma$ whose Jordan form contains only 
blocks of size strictly smaller than $r + 1$. Equivalently, 
this is the set of coclasses of $\gamma$ such that $(\gamma - 
\gamma(v))^{r} \DD{}{}{}(\Omega, v) = 0$, and this set is a 
Zariski closed subset of $\Gamma / \ann \DD{}{}{}(\Omega, v)$. Now let $S_v \subset 
W_v$ be the set of all simple transpositions in $W_v$ and let $\SS = 
\sum_{s \in S_v} \SS_s \in (\Lambda/I_W)^{W_v}$. Furthermore, let $\gamma \in \Gamma$
be such that $\pi \circ t_v(\gamma) = \SS$, which exists by Lemma 
\ref{L:translation}. Then $\gamma(v) = 0$ and
\begin{align*}
\gamma^{r} \cdot \DD{}{\omega_0^v}{}(v)
	&= \DD{}{e,\omega_0^v}{v}(\gamma^{r}) \DD{}{e}{v}
	= \D{}{\omega_0^v}{0}(\SS^{r}) \DD{}{e}{v}.
\end{align*}
Now, by Lemma \ref{L:differential-power}, we have 
\begin{align*}
\D{}{\omega_0^v}{0}(\SS^{r})
	&= \sum_{C} \prod_{i=1}^{r} \D{}{s_i}{0}(\SS)
	= \sum_{C} \prod_{i=1}^{r} \mathcal I_{S_v}(s_i)
\end{align*}
where the sum is over all reduced decompositions $s_1 \cdots s_{r}$ of 
$\omega_0^v$ and $\mathcal I_{S_v}$ is the indicator function of the set $S_v$ 
(that is, $\mathcal I_{S_v}$ is $1$ over $S_v$ and $0$ over the complement of 
$S_v$). Thus, the product $\prod_{i=1}^{r} \mathcal I_{S_v}(s_i)$ is zero 
unless each $s_i$ in the reduced decomposition lies in $S_v$. In view of 
\cite{Hump-coxeter-book}*{1.10 Proposition (b)}, there is at least one
such reduced decomposition and hence $\D{}{\omega_0^v}{0}(\SS^{r}) \in 
\ZZ_{>0}$. This shows that $\gamma \notin N$ and hence $N$ is a Zariski closed 
proper subset of $\Gamma/ \ann \DD{}{}{}(\Omega, v)$. Thus the complement of 
$N$ is dense.
\end{proof}

\section{Action of a co-rational Galois order}
In this section $G$ is a reflection group acting on $V$, and hence on 
$\Lambda = S(V)$ and on its field of rational functions $L = 
\Frac(\Lambda)$. We fix a co-rational Galois order $U \subset (L\# V)^G$ and 
denote by $\Z \subset V$ the additive monoid generated by $\supp U$. 

We assume again that $\Phi \subset V^*$ is a root system with base $\Sigma$ 
and $G = W(\Phi)$. We denote by $\Psi$ a standard subsystem with base $\Omega
\subset \Sigma$ and set $W = W(\Psi)$. All Schubert polynomials and 
Postnikov-Stanley differential operators appearing in this section are defined 
with respect to $\Omega$ unless otherwise stated.

\paragraph
Recall that for each $\sigma \in G$ we introduced a divided difference 
operator as an element of the smash product $L \# G$. Since $\End_\CC(L)$ is 
an $(L\# G)$-module, given $X \in \End_\CC(L)$ and $\sigma \in G$, we obtain a 
new operator on $L$ by taking $\partial_\sigma(X)$. Notice that, in general, 
this operator is \emph{different} from the composition of $\partial_\sigma$ 
(regarded as an element of $\End_\CC(L)$) and $X$. In the following lemma we 
collect some properties of these operators.
\label{L:dd-varia}
\begin{Lemma}
Let $X \in \End_\CC(L)$.
\begin{enumerate}[(a)]
\item 
\label{i:dd-on-operators}
For each $\sigma \in G$ we have $\partial_\sigma(X)|_K = \partial_\sigma 
\circ X |_K$.

\item
\label{i:diff-dd-composition}
Let $v \in V$ be $\Omega$-standard. If $\sigma \in W^v$ and $\tau \in W_v$ 
then 
\begin{align*}
\D{}{\sigma}{v} \circ 
\partial_\tau = 
\begin{cases} 
	\D{}{\sigma\tau}{v} 
		& \mbox{ if } \ell(\sigma\tau) = \ell(\sigma) +\ell(\tau);\\ 
	0 & \mbox{otherwise.}\end{cases}
\end{align*}

\item 
\label{i:symmetrizing}
Let $\widetilde \Psi \subset \Psi$ be a standard subsystem, $W_\theta \subset 
W$ be the corresponding parabolic subgroup, $\omega_0^\theta$ be the 
longest word in $W^\theta$, and $\Delta(\Psi)^\theta := \Delta(\Psi) / 
\Delta(\widetilde \Psi)$. If $X \in \End_\CC(L)^{W_\theta}$, then 
\begin{align*}
\sum_{\sigma \in W} \sigma \cdot X = |W_\theta| \partial_{\omega_0^\theta} 
(X \Delta(\Psi)^\theta).
\end{align*}
\end{enumerate}
\end{Lemma}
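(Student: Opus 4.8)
The plan is to prove the three parts in order, each following from the basic properties of divided differences recalled in Section 3 together with the combinatorics of minimal-length coset representatives from Section 2.

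For part \ref{i:dd-on-operators}, the point is that $\partial_\sigma$ is a $K$-linear operator on any $L\#G$-module. First I would recall that $\partial_\sigma = \nabla_{s_1}\circ\cdots\circ\nabla_{s_\ell}$ for a reduced decomposition, and that each $\nabla_s = \frac{1}{\alpha_s}(1-s)$ acts on $\End_\CC(L)$ viewed as an $L\#G$-module via $s\cdot X = s\circ X\circ s^{-1}$. For $X\in\End_\CC(L)$ and $f\in K$ one computes $(s\cdot X)(f) = s(X(s^{-1}(f))) = s(X(f))$ since $f$ is $G$-fixed; thus $\nabla_s(X)(f) = \frac{1}{\alpha_s}(X(f) - s(X(f))) = \nabla_s(X(f))$ where on the right $\nabla_s$ acts on $L$. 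Iterating over a reduced decomposition gives $\partial_\sigma(X)(f) = (\partial_\sigma\circ X)(f)$ for all $f\in K$, which is the claim. This step is essentially a formal unwinding and should not present any obstacle.

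For part \ref{i:diff-dd-composition}, I would use the defining property $\D{}{\sigma}{v} = \D{}{\sigma}{0}\circ t_v$ and reduce to the case $v=0$, i.e. to showing $\Theta(P_\sigma)\circ\partial_\tau = \Theta(P_{\sigma\tau})$ when $\ell(\sigma\tau)=\ell(\sigma)+\ell(\tau)$ and $\sigma\in W^v$, $\tau\in W_v$, and $0$ otherwise. The natural route is duality: the operators $\D{}{\sigma}{}$ form the basis of the dual of $\Lambda/I_W$ dual to the Schubert basis $\{\SS_\sigma\}$, so it suffices to understand how the transpose of $\partial_\tau$ acts on Schubert polynomials. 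Here I would invoke the standard fact (Hiller, Chapter IV) that $\partial_\tau(\SS_\rho) = \SS_{\rho\tau^{-1}}$ when $\ell(\rho\tau^{-1}) = \ell(\rho)-\ell(\tau)$ and $0$ otherwise; combined with the adjointness of $\Theta$-pairing and the divided differences (the pairing $(f,g)_\Theta$ makes $\partial_s$ self-adjoint up to the symmetric-algebra structure, or one uses the explicit Leibniz rule of Proposition \ref{P:leibniz-rule}), one gets that $\Theta(P_\sigma)\circ\partial_\tau$ pairs with $\SS_\rho$ to give $\delta_{\sigma\tau,\rho}$, forcing it to equal $\Theta(P_{\sigma\tau})$. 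The length-additivity hypothesis together with $\sigma\in W^v$, $\tau\in W_v$ guarantees $\sigma\tau$ is again a legitimate index (and lies in $W^v\cdot W_v$), so the formula is consistent with $\D{}{\sigma\tau}{v}$ being nonzero. The subtle point — and the main obstacle I anticipate — is keeping the left/right conventions straight: divided differences here compose on one side while the Demazure/Schubert indexing uses $\sigma^{-1}\omega_0$, so one must check carefully that "multiply the index on the right by $\tau$" is what $-\circ\partial_\tau$ does, rather than by $\tau^{-1}$ or on the left.

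For part \ref{i:symmetrizing}, I would start from the classical identity for the full symmetrizer: for $Y\in\End_\CC(L)^{W_\theta}$ one has $\sum_{\sigma\in W}\sigma\cdot Y = |W_\theta|\sum_{\rho\in W^\theta}\rho\cdot Y$, since $W = \bigsqcup_{\rho\in W^\theta}\rho W_\theta$ and $Y$ is $W_\theta$-invariant. Then I would apply the identity $\sum_{\rho\in W^\theta}\rho\cdot X = \partial_{\omega_0^\theta}(X\,\Delta(\Psi)^\theta)$, which is the relative version of the standard formula $\sum_{\sigma\in W}\sigma(X) = |W|^{-1}\cdots$; concretely, $\partial_{\omega_0^\theta}$ is (up to the factor $\Delta(\widetilde\Psi)$) the Demazure operator that symmetrizes $W$-covariants modulo $W_\theta$-invariants, and multiplying an $\End_\CC(L)^{W_\theta}$-valued operator by $\Delta(\Psi)^\theta = \Delta(\Psi)/\Delta(\widetilde\Psi)$ and applying $\partial_{\omega_0^\theta}$ reproduces the coset sum $\sum_{\rho\in W^\theta}\rho\cdot X$; this is where one uses that $\omega_0^\theta = \omega_0\,\omega_0(\theta)^{-1}$ has length $\ell(\omega_0)-\ell(\omega_0(\theta))$ matching $\deg\Delta(\Psi)^\theta$. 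Combining the two displays gives exactly $\sum_{\sigma\in W}\sigma\cdot X = |W_\theta|\,\partial_{\omega_0^\theta}(X\,\Delta(\Psi)^\theta)$. The one verification that needs care is that $\Delta(\Psi)^\theta$ is genuinely a polynomial and that the leading behaviour of $\partial_{\omega_0^\theta}$ on it is the constant $|W^\theta|$ — but this follows because $\Delta(\widetilde\Psi)\mid\Delta(\Psi)$ (the positive roots of $\widetilde\Psi$ are a subset of those of $\Psi$) and $\partial_{\omega_0}\Delta(\Psi)$ is a nonzero constant.
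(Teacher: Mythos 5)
Your part (a) is exactly the paper's argument (unwind $\nabla_s$ on $G$-invariant inputs and iterate along a reduced word), and your duality route for part (b) can be made to work, but with two corrections. First, the statement you reduce to, ``$\Theta(P_\sigma)\circ\partial_\tau=\Theta(P_{\sigma\tau})$'', is false as an identity of operators (already for the $S_2$ example of Section 3, $\Theta(P_e)\circ\partial_s=\partial_s$ sends $x^2\mapsto x+y$ while $\Theta(P_s)=\Theta(x-y)$ sends $x^2\mapsto 2x$); it only holds after composing with evaluation, i.e.\ for the functionals $\D{}{\sigma}{0}$, and your pairing argument does prove that version, since $\partial_\tau$ is $\Gamma$-linear, preserves $I_W$, and satisfies $\partial_\tau\SS_\rho=\SS_{\rho\tau^{-1}}$ or $0$ with the paper's conventions. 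Second, and more importantly, ``reduce to $v=0$'' is not automatic: $\D{}{\sigma}{v}\circ\partial_\tau=\D{}{\sigma}{0}\circ t_v\circ\partial_\tau$, and you must commute $t_v$ past $\partial_\tau$. This is precisely where the hypothesis $\tau\in W_v$ enters (for every reflection $s$ in a reduced word of $\tau$ one has $\alpha_s(v)=0$ and $s(v)=v$, hence $t_v\circ\nabla_s=\nabla_s\circ t_v$); your proposal instead attributes the role of $\tau\in W_v$ to bookkeeping of indices, so this step is missing. Once it is added, your argument is a legitimate alternative to the paper's, which simply writes $\D{}{\sigma}{v}\circ\partial_\tau=\ev_0\circ\partial_\sigma\circ\partial_\tau\circ t_v$ and invokes $\partial_\sigma\circ\partial_\tau=\partial_{\sigma\tau}$ if $\ell(\sigma\tau)=\ell(\sigma)+\ell(\tau)$ and $0$ otherwise.

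In part (c) there is a genuine gap. The coset decomposition $\sum_{\sigma\in W}\sigma\cdot X=|W_\theta|\sum_{\rho\in W^\theta}\rho\cdot X$ is fine, but it reduces the lemma exactly to the ``relative'' identity $\sum_{\rho\in W^\theta}\rho\cdot X=\partial_{\omega_0^\theta}(X\Delta(\Psi)^\theta)$, which you assert rather than prove: the justification offered is essentially a restatement (``applying $\partial_{\omega_0^\theta}$ reproduces the coset sum''), and the verifications you propose (that $\Delta(\widetilde\Psi)$ divides $\Delta(\Psi)$ and that $\partial_{\omega_0^\theta}\Delta(\Psi)^\theta$ is the constant $|W^\theta|$) only check the case $X=1$. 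Note also that the identity fails without $W_\theta$-invariance of $X$, so an argument actually using that hypothesis is indispensable. The paper closes exactly this gap: it starts from Hiller's formula $\partial_{\omega_0}=\Delta(\Psi)^{-1}\sum_{\sigma\in W}(-1)^{\ell(\sigma)}\sigma$, which yields $\sum_{\sigma\in W}\sigma\cdot X=\partial_{\omega_0}(X\Delta(\Psi))$ in the $L\#W$-module $\End_\CC(L)$, then factors $\partial_{\omega_0}=\partial_{\omega_0^\theta}\circ\partial_{\omega_0(\theta)}$ (lengths add), pulls the $W_\theta$-invariant factor $X\Delta(\Psi)^\theta$ out of $\partial_{\omega_0(\theta)}$ by $L^{W_\theta}$-linearity, and uses $\partial_{\omega_0(\theta)}\Delta(\widetilde\Psi)=|W_\theta|$. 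Either supply this derivation (it proves your relative identity at the same time) or give a precise reference for it; as written, your step (c) is circular.
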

\begin{proof}
We prove part (\ref{i:dd-on-operators}) by induction on $\ell(\sigma)$. If 
$\sigma$ is the identity then the result is obvious. Assume now that $\sigma 
= s \tau$ with $\ell(\sigma) = 1 + \ell(\tau)$ and $s \in S$, and that the 
statement holds for $\tau$. Setting $X' = 
\partial_\tau(X)$, we obtain
\begin{align*}
\partial_\sigma(X) (f)
	&= \partial_s(X')(f)
	= \frac{1}{\alpha_s} (X'(f) - s\circ X' \circ s (f)) 
	= \frac{1}{\alpha_s} (X'(f) - s(X'(f)))\\
	&= \partial_s(X'(f))
	= \partial_s(\partial_\tau(X(f)))
	= \partial_\sigma(X(f))
\end{align*} 
which is the desired indentity.

We now prove part (\ref{i:diff-dd-composition}). The fact that $\tau \in W_v$
implies that $t_v \circ \partial_\tau = \partial_\tau \circ t_v$. Now recall
from Proposition \ref{P:leibniz-rule} that $\D{}{\sigma}{0} = \ev_0 \circ 
\partial_\sigma$, so 
\begin{align*}
\D{}{\sigma}{v} \circ \partial_\tau 
	&= \D{}{\sigma}{0} \circ \partial_\tau \circ t_v \\
	&= \ev_0 \circ \partial_\sigma \circ \partial_\tau \circ t_v
	= \begin{cases}
		\ev_0 \circ \partial_{\sigma\tau} \circ t_v = \D{}{\sigma\tau}{v}
			& \mbox{ if } \ell(\sigma \tau) = \ell(\sigma) + \ell(\tau); \\
		0 & \mbox{otherwise}.
	\end{cases}
\end{align*}

Finally we prove part (\ref{i:symmetrizing}).
The statement of \cite{Hiller-coxeter-book}*{Chapter IV (1.6)} implies that
$\partial_{\omega_0} = \frac{1}{\Delta(\Phi)} \sum_{\sigma \in G} 
(-1)^{\ell(\sigma)} \sigma$ as operators on $L$, and since the map $L \# 
G \to \End_\CC(L)$ is injective, the identity holds in $L \# G$. Using that and the 
fact that $\sigma \cdot \Delta(\Phi) = (-1)^{\ell(\sigma)} \Delta(\Phi)$ we 
deduce that $\sum_{\sigma \in G} \sigma \cdot X = \partial_{\omega_0}
(X \Delta(\Phi))$ for any $X \in \End_\CC(L)$. Certainly, the analogous identity 
holds if we replace $G$ by any subgroup and $\Phi$ by the corresponding root 
subsystem.

Let $\omega_0$ and $\omega_1$ be the longest elements of $W$ and $W_\theta$, 
respectively. Then $\omega_0 \omega_1^{-1} \in \omega_0 W_\theta$ and its 
length equals to $\ell(\omega_0) - \ell(\omega_1)$,  the 
smallest possible length of an element in the coset $\omega_0 G_\theta$. Thus 
$\omega_0^\theta = \omega_0 \omega_1^{-1}$ and
\begin{align*}
\sum_{\sigma \in W} \sigma \cdot X 
	&=\partial_{\omega_0}(X \Delta(\Psi))
	= \partial_{\omega^\theta_0} \partial_{\omega_1}(X \Delta(\widetilde \Psi) 
	\Delta(\Psi)^\theta).
\end{align*}
Now both $\Delta(\Psi)^\theta$ and $X$ are $W_\theta$-invariant, so the last 
expression equals
\begin{align*}
\partial_{\omega^\theta_0}(
	X \Delta(\Psi)^\theta \partial_{\omega_1}(
		\Delta(\widetilde \Psi)))
	= |W_\theta| \partial_{\omega^\theta_0}(X \Delta(\Psi)^\theta),
\end{align*}
which completes the proof.
\end{proof}

Recall that for each $z \in V$ there exists some $\Omega$-standard element in 
the orbit $W \cdot z$. Thus, given $Z \subset V$ that is stable by the action of $W$, we 
can choose a set of $\Omega$-standard representatives of $Z/W$. The following
proposition shows how this fact can be used to express
elements of $U$ in different ways.

\begin{Proposition}
\label{P:form}
Let $X \in (L \#V)^G$ and assume that there exists $\chi \in \hat G$ such that 
$d_\chi X \in \Lambda \# V$. 
\begin{enumerate}[(a)]
\item 
\label{i:invariant-form}
For each $z \in \supp X$ there exists $f_z \in \Lambda^{G_z}$ such that
\begin{align*}
X
	&= \sum_{z \in \supp X} \frac{f_z}{d_{\chi}^z} t_z,
\end{align*}
where $d_{\chi}^z$ is the product of all $\alpha \in \Phi^+$ dividing $d_\chi$ 
such that $\alpha(z) \neq 0$. 

\item
\label{i:dd-form}
Let $Y$ be a set of $\Omega$-standard representatives of $\supp X / W$, and 
for each $y \in Y$ denote by $\omega_0^y$ the longest element in $W^y$, and by 
$\Delta(\Psi)^y$ the product of all roots in $\Psi^+$ with $\alpha(y) 
\neq 0$. Then 
\begin{align*}
X
	&= \sum_{y \in Y} \frac{1}{|W_y|}
		\partial_{\omega_0^{y}}\left(
			\frac{f_y \Delta(\Psi)^y}{d_{\chi}^y} t_y
		\right).
\end{align*}
\end{enumerate}
\begin{proof}
Fix $z \in \supp X$ and let $h$ be the coefficient of $t_z$ in $X$, which is 
well defined by Lemma \ref{L:translation-lemma}. Since $X$ is $G$-invariant we 
know that $\sigma \cdot X = X$ for any $\sigma \in G_z$, so $\sigma(h) = h$. 
Writing $h = \frac{g}{d_\chi}$ we have
\begin{align*}
\frac{g}{d_\chi} 
	= \sigma \cdot \frac{g}{d_\chi} 
	=\frac{\sigma \cdot g}{\chi(g) d_\chi}.
\end{align*}
Therefore, $\sigma \cdot g = \chi(\sigma) g$ for all $\sigma \in G_z$. 

Denote by $\chi'$ the restriction of $\chi$ to $G_z$. Observe that $G_z$ is the 
reflection group generated by the reflections fixing $z$ and it acts on 
$\Lambda$ by restriction. Thus,  by Stanley's theorem, the space of relative 
invariants $\Lambda^{G_z}_{\chi'}$ is generated over $\Lambda^{G_z}$ by 
$d_{\chi'}$, and this polynomial is the product of all roots $\alpha \in 
\Phi^+$ dividing $d_\chi$ such that $\alpha(z) = 0$. Therefore, $g = f_z d_{\chi}$
for some $f_z \in \Lambda^{G_z}$, which implies that $\frac{g}{d_{\chi}} = 
\frac{f_z}{d_\chi/d_{\chi'}} = \frac{f_z}{d_{\chi}^z}$. This proves part 
(\ref{i:invariant-form}). 

Since $X$ is $G$-invariant, it is clear that
\begin{align*}
X 
	&= \frac{1}{|W|} \sum_{\sigma \in W} \sigma \cdot X
	=\sum_{y \in Y} \frac{1}{|W|}\sum_{\sigma \in W} \sigma \cdot 
		\left( \frac{f_y}{d_\chi^y} t_y\right).
\end{align*}
As we mentioned before, the coefficient of $t_y$ is $G_y$-invariant, and hence 
it is $W_y$-invariant. After applying Lemma 
\ref{L:dd-varia}(\ref{i:symmetrizing}) to $W$, we obtain
\begin{align*}
\sum_{\sigma \in W} \sigma \cdot
		\left(\frac{f_y}{d_\chi^y} t_y \right)
		&= |W^y| \partial_{\omega_0^{y}}\left(
			\frac{f_y \Delta(\Psi)^y}{d_\chi^y} t_y 
		\right)
\end{align*}
and the result follows.
\end{proof}
\end{Proposition}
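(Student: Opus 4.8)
The plan is to prove (a) first — an explicit normal form for $X$ forced by $G$-equivariance alone — and then to derive (b) by averaging this normal form over the subgroup $W$ and recognizing the average as a divided-difference operator via Lemma \ref{L:dd-varia}(\ref{i:symmetrizing}).

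For part (a): Lemma \ref{L:translation-lemma} gives a unique expansion $X = \sum_{z \in \supp X} h_z t_z$ with each $h_z \in L^\times$, and the hypothesis $d_\chi X \in \Lambda \# V$ says exactly that $d_\chi h_z \in \Lambda$ for every $z$; write $h_z = g_z/d_\chi$ with $g_z \in \Lambda$. The first step is to use $G$-invariance: for $\sigma \in G_z$, comparing the coefficients of $t_z$ in $\sigma \cdot X = X$ yields $\sigma(h_z) = h_z$, and since $d_\chi$ generates the relative invariants $\Lambda^G_\chi$ we have $\sigma \cdot d_\chi = \chi(\sigma) d_\chi$, so $\sigma(g_z) = \chi(\sigma) g_z$; that is, $g_z \in \Lambda^{G_z}_{\chi'}$ where $\chi' = \chi|_{G_z}$. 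The second step is to apply Stanley's theorem (\cite{Hiller-coxeter-book}*{4.4 Proposition}) to $G_z$, which by Steinberg's fixed-point theorem is the reflection group generated by the reflections of $G$ fixing $z$, so that $\A(G_z) = \{\ker \alpha_H \in \A(G) \mid \alpha_H(z) = 0\}$. Stanley's theorem then identifies the generator of the free rank-one $\Lambda^{G_z}$-module $\Lambda^{G_z}_{\chi'}$ with the product of those $\alpha \mid d_\chi$ vanishing at $z$; dividing $d_\chi$ by this product — which is harmless since $d_\chi$ is, up to scalars, a squarefree product of positive roots — leaves $d_\chi^z$, and we conclude $h_z = g_z/d_\chi = f_z/d_\chi^z$ with $f_z \in \Lambda^{G_z}$.

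For part (b): since $W \subset G$ the operator $X$ is $W$-invariant, hence $\supp X$ is $W$-stable and we may take $Y$ to be a set of $\Omega$-standard representatives of $\supp X/W$, using the remark preceding the proposition. The idea is to regroup the expansion from (a) one $W$-orbit at a time. Within the orbit of $y \in Y$ one has $h_{\sigma(y)} = \sigma(h_y)$ by $W$-invariance, and because $h_y \in L^{W_y}$ while $W_y$ fixes $y$, the operator $\frac{f_y}{d_\chi^y} t_y$ is $W_y$-invariant in $\End_\CC(L)$; therefore the partial sum $\sum_{z \in W \cdot y} h_z t_z$ is, up to a scalar, equal to $\sum_{\sigma \in W} \sigma \cdot \left(\frac{f_y}{d_\chi^y} t_y\right)$. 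Feeding this average into Lemma \ref{L:dd-varia}(\ref{i:symmetrizing}), with $\widetilde \Psi$ taken to be the standard subsystem $\{\alpha \in \Psi \mid \alpha(y) = 0\}$ — so that the corresponding parabolic is $W_y$, the longest element of its complement is $\omega_0^y$, and $\Delta(\Psi)^\theta = \Delta(\Psi)^y$ — rewrites it as the appropriate multiple of $\partial_{\omega_0^y}\left(\frac{f_y \Delta(\Psi)^y}{d_\chi^y} t_y\right)$. Summing over $y \in Y$ produces the displayed formula.

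I expect the principal obstacle to be tracking the combinatorial constants in part (b): the orbit $W \cdot y$ has $|W/W_y|$ elements, symmetrizing a $W_y$-invariant operator over $W$ contributes a factor $|W_y|$, and Lemma \ref{L:dd-varia}(\ref{i:symmetrizing}) supplies a further $|W_\theta|$, and all three must be reconciled to land on the normalization $1/|W_y|$ in the statement. A secondary point is the invariant-theoretic input in (a): one must check via Steinberg's theorem that $G_z$ is genuinely a reflection group and that its hyperplane arrangement is exactly the set of hyperplanes of $\A(G)$ through $z$ — this is what forces the generator of $\Lambda^{G_z}_{\chi'}$ to be precisely the ``vanishing part'' of $d_\chi$ rather than something larger.
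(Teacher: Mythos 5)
Your proposal is correct and follows essentially the same route as the paper: for part (a), coefficient-wise $G_z$-invariance (via Lemma \ref{L:translation-lemma}) together with Stanley's theorem applied to the reflection group $G_z$ to identify $d_{\chi'}$ with the vanishing part of $d_\chi$; for part (b), $W$-averaging over each orbit of an $\Omega$-standard representative and then Lemma \ref{L:dd-varia}(\ref{i:symmetrizing}) with $\widetilde\Psi=\Psi_0(y)$. The constant bookkeeping you flag is a genuine point, but it is an issue already present in the paper's own proof (which quotes the symmetrization lemma with $|W^y|$ rather than $|W_\theta|$), not a gap in your argument.
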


\paragraph
\about{$U$-submodule of $\Gamma^*$ associated to $v$}
Recall that to each $v \in V$ we associate the character $\ev_v: \Gamma \to 
\CC$ given by evaluation at $v$. Since $\Gamma$ consists of $G$-symmetric 
polynomials, $\ev_v = \ev_{\sigma(v)}$ for any $\sigma \in G$, so we can 
assume that $v$ is $\Omega$-standard. Furthermore, note that $\ev_v = 
\DD{}{e}{v} \in \DD{}{}{}(\Omega, v) \subset \Gamma^*$.

\begin{Definition}
Let $v \in V$ be standard. We denote by $V(\Omega, T(v))$ the space 
$\displaystyle \sum_{z \in \Z} \DD{}{}{}(\Omega, v+z)$.
\end{Definition}

Recall that $\Phi_0(v)$ is the set of all roots in $\Phi$ such that $\alpha(v)
= 0$. The following theorem shows that under certain conditions the space 
$V(\Omega, T(v))$ is a $U$-module. This theorem generalizes 
\cite{EMV-orthogonal}*{Theorem 10} and 
\cite{RZ-singular-characters}*{5.6 Theorem} to rational Galois orders.
\begin{Theorem}
\label{T:module-structure}
Let $v \in V$ be standard and assume that $\Phi_0(v+z) \subset \Psi$ for each 
$z \in \Z$. Then $V(\Omega, T(v)) \subset \Gamma^*$ is a Gelfand-Tsetlin 
$U$-module.
\end{Theorem}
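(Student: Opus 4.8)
The plan is to show two things: first, that $V(\Omega, T(v))$ is closed under the action of $U$, and second, that it is a Gelfand-Tsetlin module, i.e. finitely generated and a sum of generalized weight spaces for $\Gamma$. The second point will follow almost immediately from the machinery already in place: by Proposition \ref{P:gamma-module-structure} each summand $\DD{}{}{}(\Omega, v+z)$ is a finite-dimensional $\Gamma$-module on which $\gamma \in \Gamma$ acts with the single generalized eigenvalue $\gamma(v+z) = \ev_{v+z}(\gamma)$, so $V(\Omega, T(v))$ is automatically a sum of generalized weight spaces. For finite generation one uses that $U$ is a Galois order: once we know $V(\Omega, T(v))$ is a $U$-module containing $\ev_v = \DD{}{e}{v}$, the submodule $U \cdot \ev_v$ is finitely generated, and in fact the argument for $U$-stability below will show $V(\Omega, T(v))$ is generated by $\ev_v$ together with finitely many $\DD{}{\sigma}{v+z}$ reachable in one step, hence finitely generated over $\Gamma$ (invoking Lemma \ref{L:sub-gt} if needed to pass to submodules).

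\textbf{The main work: $U$-stability.} The crux is to check that $X \cdot \DD{}{\sigma}{v+z} \in V(\Omega, T(v))$ for each generator $X$ of $U$ and each basis element $\DD{}{\sigma}{v+z}$ with $\sigma \in W^{v+z}$ and $z \in \Z$. Recall that $U \subset (L \# V)^G$ acts on $\Gamma^* $ via $X \cdot \chi = \chi \circ X^\dagger$. The strategy is to use the explicit description of $X$ from Proposition \ref{P:form}(\ref{i:dd-form}): writing $X = \sum_{y \in Y} \frac{1}{|W_y|} \partial_{\omega_0^y}\bigl( \frac{f_y \Delta(\Psi)^y}{d_\chi^y} t_y \bigr)$, one needs to understand $X^\dagger$, which is a sum of terms of the form $t_{-y} \cdot (\text{rational function})$ composed with divided differences. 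Applying such an operator to a polynomial in $\Gamma$ and then evaluating via $\DD{}{\sigma}{v+z} = \D{}{\sigma}{0} \circ t_{v+z}$ should, after using the commutation relations $t_w \circ \partial_\tau = \partial_\tau \circ t_w$ for $\tau$ fixing $w$ (Lemma \ref{L:dd-varia}(\ref{i:diff-dd-composition})) and the composition rule $\D{}{\sigma}{v} \circ \partial_\tau = \D{}{\sigma\tau}{v}$ or $0$, reduce to a linear combination of $\D{}{\rho}{v + z - y}$ evaluated against $\Gamma$ — and $v + z - y$ lies in $v + \Z$ since $-y \in \Z$ (because $\Z$ is the monoid generated by $\supp U$ and is $W$-stable, and $y$ is a $W$-representative of an element of $\supp X \subset \Z$; here one must be slightly careful about signs and about the fact that $\Z$ need not be a group, so one should track whether it is $z - y$ or $z + y$ that appears, using $X^\dagger \in L \# \Z(U)^{-1}$ and the compatibility in \cite{Hart-rational-galois}*{Theorem 4.2}). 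The hypothesis $\Phi_0(v + z) \subset \Psi$ for all $z \in \Z$ is what guarantees that each $v + z$ (after moving to a $W$-standard representative) has its stabilizer parabolic inside $W$, so that all the Postnikov-Stanley operators $\D{}{\rho}{v+z}$ that arise are the ones defined with respect to $\Omega$ and $W$, and Proposition \ref{P:gamma-module-structure}(\ref{i:gamma-basis}) applies to kill the terms with $\rho \notin W^{v+z}$.

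\textbf{Anticipated obstacle.} The hard part will be the bookkeeping in the composition $\D{}{\sigma}{0} \circ t_{v+z} \circ X^\dagger$ restricted to $\Gamma$: one must verify that after expanding $X^\dagger$ via Proposition \ref{P:form}, commuting translations past divided differences, and re-expressing everything, the output is genuinely a $\CC$-linear combination of the functionals $\DD{}{\rho}{v + z'}$ with $z' \in \Z$, with no leftover rational-function coefficients and no translations by vectors outside $v + \Z$. In particular one needs that the ``denominator'' rational functions $d_\chi^y, \Delta(\Psi)^y$ interact correctly with $t_{v+z}$ and evaluation at $0$ — i.e. that no pole is hit — which is exactly where the standardness of $v$ and the hypothesis on $\Phi_0(v+z)$ enter: they ensure the relevant linear forms do not vanish at the relevant points, so all the operators extend to the localization of $\Lambda$ at the maximal ideal of $v+z$, as in the extension discussion following Proposition \ref{P:leibniz-rule}. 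Once the single-generator, single-basis-element case is settled, stability under all of $U$ follows since $U$ is generated by $\Gamma$ (which acts within each $\DD{}{}{}(\Omega, v+z)$ by Theorem \ref{T:gamma-module}) and the finite set $\mathcal X$.
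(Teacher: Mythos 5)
Your outline follows the paper's route — decompose the daggered generators via Proposition \ref{P:form}(\ref{i:dd-form}), commute translations past divided differences via Lemma \ref{L:dd-varia}, expand with the Leibniz rule of Proposition \ref{P:leibniz-rule}, and land back in $\sum_{z \in \Z}\DD{}{}{}(\Omega, v+z)$ — but the step you defer as the ``anticipated obstacle'' is exactly where the proposal, as written, breaks, for two linked reasons. First, the hypothesis $\Phi_0(v+z)\subset\Psi$ does \emph{not} ensure that ``the relevant linear forms do not vanish'': in the singular situations the theorem is designed for, factors $\alpha$ of $d_\chi$ with $\alpha(z)\neq 0$ genuinely vanish at $v'$, a standard representative of the $W$-orbit of $v+z'$. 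The actual mechanism is cancellation, not non-vanishing: one must apply Proposition \ref{P:form}(\ref{i:dd-form}) relative to the stabilizer $\widetilde W = W_{v'}$ and its root system $\widetilde\Psi$ (the roots of $\Psi$ vanishing at $v'$), so the coefficient of $t_z$ takes the form $F_z = f_z\,\Delta(\widetilde\Psi)^z/d_\chi^z$; the hypothesis plus $W$-stability of $\Psi$ gives $\Phi_0(v')\subset\Psi$, hence every factor of $d_\chi^z$ vanishing at $v'$ divides $\Delta(\widetilde\Psi)^z$, so $F_z$ is regular at $v'$ and Proposition \ref{P:leibniz-rule} can be applied to $t_{v'}(F_z)$. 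Second, your decomposition is taken relative to $W$, with divided differences $\partial_{\omega_0^y}$ for $\omega_0^y \in W^y$; such elements need not fix $v+z'$, so the commutation $t_{v+z'}\circ\partial_\tau = \partial_\tau\circ t_{v+z'}$ you plan to invoke from Lemma \ref{L:dd-varia}(\ref{i:diff-dd-composition}) is unavailable. Decomposing relative to $\widetilde W = W_{v'}$ repairs this too: the divided differences that occur are $\partial_{\widetilde\omega_0^z}$ with $\widetilde\omega_0^z \in W_{v'}$, so they commute with $t_{v'}$, and then $\D{}{\sigma}{v'}\circ\partial_{\widetilde\omega_0^z}$ is either $0$ or $\D{}{\sigma\widetilde\omega_0^z}{v'}$.

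With those two corrections — first replace $v+z'$ by $v'$ using Proposition \ref{P:gamma-module-structure}(\ref{i:gamma-comparision}), then symmetrize over $W_{v'}$ — the computation you sketch does go through: the Leibniz/Littlewood--Richardson expansion yields a $\CC$-linear combination of the functionals $\DD{}{\nu}{v'+z}$, and $v'+z$ lies in the $W$-orbit of $v+z'+\tau^{-1}(z)$, so these terms lie in $V(\Omega, T(v))$. Your worry about signs is legitimate but is handled by applying Proposition \ref{P:form} directly to $X^\dagger$ (note $d_\chi X^\dagger = (X d_\chi)^\dagger \in \Lambda\#V$, so $X^\dagger$ satisfies the same hypothesis). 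Two smaller points: the claim that $V(\Omega, T(v))$ is ``finitely generated over $\Gamma$'' cannot be right, since $\Gamma$ acts locally finitely while the module is infinite dimensional in general — the paper's proof only verifies local $\Gamma$-finiteness (Theorem \ref{T:gamma-module}) together with $U$-stability; and $V(\Omega, T(v))$ is not generated by $\ev_v$ in general, that being the content of the extra hypothesis in Theorem \ref{T:simplicity}.
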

\begin{proof}
By Theorem \ref{T:gamma-module}, the action of $\Gamma$ on $V(\Omega, T(v))$ is
locally finite, so we only need to show that it is a $U$-submodule of 
$\Gamma^*$.
By definition, $U$ is generated by a finite set $\mathcal X$ such that 
any element $X \in \mathcal X^\dagger$ satisfies the hypothesis of Proposition 
\ref{P:form}. Hence it is enough to prove the following: for each $z' \in \Z$, 
each $\sigma \in G$, and each $X$ satisfying the hypothesis of Proposition 
\ref{P:form}, we have $\DD{}{\sigma}{v+z'} \circ X \in V(\Omega, T(v))$. We 
will prove this in several steps. 

First, let $v'$ be a standard element in the $W$-orbit of $v + z'$. Since 
$\DD{}{}{}(\Omega, v') = \DD{}{}{}(\Omega, v+ z')$ by Proposition 
\ref{P:gamma-module-structure}(\ref{i:gamma-comparision}),
the statement in the theorem is equivalent to showing that $\DD{}{\sigma}{v'} 
\circ X \in V(\Omega, T(v))$. Now let $\widetilde W = W_{v'}$ and let 
$\widetilde \Psi = \Psi_0(v')$ be 
the associated standard root subsystem. By Proposition 
\ref{P:form}(\ref{i:dd-form}), $X$ can be written as a sum of operators of the form 
$\partial_{\widetilde \omega_0^z}(F_z t_z)$ for $z \in \Z$, where $\widetilde 
\omega_0^z$ is the longest element of $\widetilde W^z$ and $F_z = \frac{f_z 
\Delta(\widetilde \Psi)^z}{d_\chi^z}$. Thus 
\begin{align*}
\DD{}{\sigma}{v'} \circ X
	&= \sum_{z \in Y} \frac{1}{|\widetilde W_z|}
		\D{}{\sigma}{v'} \circ \partial_{\widetilde \omega_0^{z}}(
			F_z t_z)|_\Gamma,
\end{align*}
where $Y$ is a set of $\widetilde \Omega$-standard representatives of $\supp X / 
\widetilde W$. So, it is enough to show that $\D{}{\sigma}{v'} \circ 
\partial_{\widetilde \omega_0^z}(F_z t_z)|_\Gamma \in V(\Omega, T(v))$ for any $z \in 
Y$.

We claim that $F_z$ is regular at $v'$. Recall that $d_\chi^z$ is the product
of all roots $\alpha_s$ such that $\chi(s) = -1$ and $\alpha_s(z) \neq 0$. If
one of this factors is such that $\alpha_s(v') = 0$ then $\alpha_s \in 
\Phi_0(v') = \Phi_0(\tau(v+z')) = \tau(\Phi_0(v+z'))$ for some $\tau \in W$. 
Now since $\Phi_0(v+z') \subset \Psi$ by hypothesis, and since $\Psi$ is stable
by the action of $W$, it follows that $\Phi_0(v') \subset \Psi$, and hence 
$\alpha_s$ is also a factor of $\Delta(\widetilde \Psi)^z$. Thus the term
$\Delta(\widetilde \Psi)^z$ in the numerator cancels out all the linear terms in
the denominator which are zero at $v'$. This proves that $F_z$ is regular at $v'$.

We make one further simplification. By parts (\ref{i:dd-on-operators}) and
(\ref{i:diff-dd-composition}) of Lemma \ref{L:dd-varia},
\begin{align*}
\D{}{\sigma}{v'} \circ 
	\partial_{\widetilde \omega_0^z}(F_z t_z) |_\Gamma
	&= \D{}{\sigma}{0} \circ t_{v'} \circ \partial_{\widetilde \omega_0^z} 
		\circ F_z t_z |_\Gamma
	= \D{}{\sigma}{0} \circ \partial_{\widetilde \omega_0^z} \circ t_{v'}(F_z) 
		t_{v'+z} |_\Gamma \\
	&= \begin{cases}
		\D{}{\sigma \widetilde \omega_0^z}{0} \circ t_{v'}(F_z) t_{v'+z}
			& \mbox{ if } \ell(\sigma \widetilde \omega_0^z) = \ell(\sigma)
				+ \ell(\widetilde \omega_0^z);\\
		0 & \mbox{otherwise.}
	\end{cases}
\end{align*}
Here we have used that $t_v$ and $\partial_{\widetilde \omega_0^z}$ commute since
$\widetilde \omega_0^z \in W_{v'}$. If the result above is $0$ then we are done. On the 
other hand, since $F_z$ is regular at $v'$ then $t_{v'}(F_z)$ is regular at 
$0$. So, writing $t_{v'}(F_z) = \sum_{\rho \in W} (t_{v'}(F_z))_{(\rho)} 
\SS_\rho$ and recalling from Proposition \ref{P:leibniz-rule} that 
$(t_{v'}(F_z))_{(\rho)}(0) = \D{}{\rho}{0}(t_{v'}(F_z)) = \D{}{\rho}{v'}(F_z)$,
we obtain that
\begin{align*}
\D{}{\sigma}{v'} \circ \partial_{\widetilde \omega_0^z} (F_z t_z)|_\Gamma
	&= \begin{cases}
		\sum_{\rho \in W} 
			\D{}{\rho}{v'}(F_z)
				(\D{}{\sigma \widetilde \omega_0^z}{0} \circ 
					\SS_\rho t_{v'+z})|_\Gamma
			& \mbox{ if } \ell(\sigma \widetilde \omega_0^z) = \ell(\sigma)
				+ \ell(\widetilde \omega_0^z);\\
		0 & \mbox{otherwise.}
	\end{cases}
\end{align*}
Finally, let $\gamma \in \Gamma$. Then 
\begin{align*}
(\D{}{\sigma \widetilde \omega_0^z}{0} \circ \SS_\rho t_{v'+z}) (\gamma)
	&= \D{}{\sigma \widetilde \omega_0^z}{0}(\SS_\rho t_{v'+z}(\gamma)) \\
	&= \sum_{\nu \in W} t_{v'+z}(\gamma)_{(\nu)}(0)
		\D{}{\sigma \widetilde \omega_0^z}{0} (\SS_{\rho} \SS_{\nu}) \\
	&= \sum_{\nu \in W}
		c^{\sigma \widetilde \omega_0^z}_{\rho, \nu} \D{}{\nu}{v+z'}(\gamma).
\end{align*} 
Using the identities above,  we obtain
\begin{align*}
\DD{}{\sigma}{v'} \circ X 
	&= \sum_{\substack{z \in Y \\ \ell(\sigma \widetilde \omega_0^z)
		= \ell(\sigma) + \ell(\widetilde \omega_0^z)}} \frac{1}{|\widetilde W_z|}
		\sum_{\rho, \nu \in W} 
			c^{\sigma \widetilde \omega_0^z}_{\rho, \nu}
				\D{}{\rho}{v'}(F_z) \DD{}{\nu}{v'+z} \\
	&= \sum_{\substack{z \in Y \\ \ell(\sigma \widetilde \omega_0^z)
		= \ell(\sigma) + \ell(\widetilde \omega_0^z)}} \frac{1}{|\widetilde W_z|}
		\sum_{\nu \in W} 
			\D{}{\nu, \sigma \widetilde \omega_0^z}{v'}(F_z) \DD{}{\nu}{v'+z}.
\end{align*}
Now $v' + z = \tau(v+z') + z = \tau(v + z' + \tau^{-1}(z))$ and hence 
$\DD{}{\nu}{v'+z} \in V(\Omega, T(v))$.
\end{proof}

\section{Standard Galois orders of type $A$}
In this section we consider a special type of Galois order, for which we find 
a basis of Postnikov-Stanley operators for the module introduced in Theorem 
\ref{T:module-structure}. We also give a sufficient condition for the 
simplicity of this module.

\paragraph
Given $\mu = (\mu_1, \ldots, \mu_r) \in \NN^r$ we set $\CC^\mu = \CC^{\mu_1} 
\times \cdots \times \CC^{\mu_r}$ and $\II = \II(\mu) = \{(k,i) \mid 1 \leq k 
\leq r, 1 \leq i \leq \mu_k\}$. Also, for each $v \in \CC^\mu$ and $(k,i) \in 
\II$, we will denote by $v_k$ the projection of $v$ to the component 
$\CC^{\mu_k}$, and by $v_{k,i}$ the $i$-th coordinate of $v_k$. We will denote 
by $e_{k,i}$ the vector of $\CC^\mu$ with $(e_{k,i})_{l,j} = \delta_{k,l}
\delta_{i,j}$, and refer to the set $\{e_{k,i} \mid (k,i) \in \II\}$ as the 
\emph{canonical basis} of $\CC^\mu$. We denote by $\{x_{k,i} \mid (k,i) \in 
\II\}$ the dual basis to the canonical basis, so $\CC[X_\mu] = \CC[x_{k,i} 
\mid (k,i) \in \II]$ is the algebra of polynomial functions over $\CC^\mu$.
We denote the fraction field of this algebra by $\CC(X_\mu)$. For each $(k,i) 
\in \II$ we write $t_{k,i}$ for the automorphism $t_{e_{k,i}} \in 
\End_\CC(\CC(X_\mu))$.

For each $1 \leq j \leq r$ the symmetric group $S_{\mu_j}$ acts on 
$\CC^{\mu_j}$ by permuting the coordinates of a vector, and hence $S_\mu = 
S_{\mu_1} \times \cdots \times S_{\mu_r}$ acts on $\CC^\mu$. This is a 
reflection group corresponding to the root system $\Phi = \{x_{k,i} - x_{k,j} 
\mid (k,i),(k,j) \in \II\}$. We fix $\Sigma = \{x_{k,i} - x_{k,i+1} \mid 1 
\leq k \leq r, 1 \leq i < \mu_k\}$  as a base of $\Phi$.
Given $\sigma \in S_{\mu}$ we will denote by $\sigma[k]$ its projection to
$S_{\mu_k}$. Also, given $\tau \in S_{\mu_k}$ we will denote by $\tau^{(k)}$
the unique element of $S_\mu$ such that $\tau^{(k)}[k] = \tau$ and 
$\tau^{(k)}[l] = \Id_{S_{\mu_l}}$ for $l \neq k$. We denote by 
$\sym_k = \frac{1}{\mu_k!}\sum_{\sigma \in S_{\mu_k}} \sigma^{(k)} \in 
\CC[S_\mu]$, and $\Delta_k= \prod_{1 \leq i < j \leq \mu_k}(x_{k,i}-x_{k,j})$. 
Notice that $\Delta_k$ is the generator of the space of relative invariants
associated to the character $\sg[k]$ given by $\sg[k](\sigma)=\sg(\sigma[k])$.

The action of $S_\mu$ on $\CC^\mu$ induces actions on $\CC[X_\mu]$ and 
$\CC(X_\mu)$, so we may consider Galois orders in $(\CC(X_\mu) \# 
\CC^\mu)^{S_\mu}$. The following definition distinguishes a special class of 
such rational Galois orders.

\begin{Definition}
Let $\mu = (\mu_1, \ldots, \mu_r) \in \NN$ and let $U \subset (\CC(X_\mu) \# 
\CC^\mu)^{S_\mu}$ be a Galois order. We will say that $U$ is a \emph{standard 
Galois order of type $A$} if it is generated by $\CC[X_\mu]^{S_\mu}$ and a set 
$\mathcal X = \{X_k^\pm \mid 1 \leq k \leq r'\}$ for some $r' \leq r$ such that
\begin{align*}
X_k^\pm 
 &= \sym_k \left( 
  t_{\pm e_{k,1}}
 \frac{f_k^\pm}{\prod_{j = 2}^{\mu_k}(x_{k,1} - x_{k,j})} 
 \right).
\end{align*}
\end{Definition}
\begin{Remark*}
As indicated earlier, by definition, a standard Galois order of type $A$ is 
not necessarily a standard Galois order in the sense of Hartwig's definition, 
in \cite{Hart-rational-galois}*{Definition 2.30}. 
\end{Remark*}

Notice that in the definition above $X_k^\pm \Delta_k  \in \CC[X_\mu] \# 
\ZZ^\mu$ so $U$ is a co-rational Galois order. From now on set $\overline \mu = 
(\mu_1, \ldots, \mu_{r'}, 0, \ldots, 0) \in \NN^r$.  By definition,   
 $\supp U = \ZZ^{\overline \mu}$ for any $U$ which is a standard 
Galois order of type $A$. 

\begin{Example*}
As discussed in \cite{Hart-rational-galois}*{\S 4.2}, finite $W$-algebras of
type $A$ are co-rational Galois orders. The explicit formulas given in that
paragraph show that they are in fact standard Galois orders of type $A$.
Simmilarly the formulas from \cite{Hart-rational-galois}*{\S 4.4} show that
orthogonal Gelfand-Tsetlin algebras, introduced by Mazorchuk in 
\cite{Maz-orthogonal-GT-alg}, are also examples of standard Galois orders
of type $A$.
\end{Example*}

\paragraph
\about{Modules of the form $V(\Omega, T(v))$}\label{par-mod-omega}
Fix $\mu \in \NN^r$ and let $U \subset (\CC[X_\mu] \# \CC^\mu)^{S_\mu}$ be a
standard Galois order of type $A$. We will denote by $\overline \Phi$ the root 
system $\{x_{k,i} - x_{k,j} \mid 1 \leq k \leq r', 1 \leq i < j \leq \mu_k\}$, 
and by $\overline \Sigma$ the base $\Sigma \cap \overline \Phi$. 

Given $v \in \CC^\mu$ we set $\Psi(v) = \{\alpha \in \overline \Phi \mid 
\alpha(v) \in \ZZ\}$. We will say that $v$ is a \emph{seed} if $\Psi$ is a
standard root subsystem of $\overline \Phi$ and $\Psi(v) = \overline 
\Phi_0(v)$; notice that this second equality is equivalent to $W_v = 
W(\Psi(v))$. We claim that for every element $v \in \CC^\mu$ there exists a 
seed $\vv$ of the form $\sigma(v) + z$ for some $z \in \ZZ^{\overline \mu}$ 
and some $\sigma \in S_{\overline \mu}$. Indeed, since $\Psi(v)$ is a root 
subsystem of $\overline \Phi$, there exists $\sigma \in S_{\overline \mu}$ 
such that $\sigma(\Psi(v)) = \Psi(\sigma^{-1}(v))$ is a standard subsystem. In 
other words, $v' = \sigma^{-1}(v)$ has the property that if $v'_{k,i} - 
v'_{k,j} \in \ZZ$ for some $(k,i), (k,j) \in \II(\overline \mu)$ with $i < j$, 
then $v'_{k,s} - v'_{k,s+1} \in \ZZ$ for any $i \leq s < j$. It follows that 
there exists $z \in \ZZ^{\overline \mu}$ such that $v'' = v'+z$ has an even 
stronger property: if $v''_{k,i} - v''_{k,j} \in \ZZ$ for some $(k,i), (k,j) 
\in \II(\overline \mu)$ with $i < j$, then $v''_{k,s} = v''_{k,s+1}$ for any 
$i \leq s < j$, or equivalently $v''$ is seed.  

Fix a seed $\vv$, and set $\Psi = \Psi(\vv)$ and $\Omega = \Psi(\vv) \cap
\overline \Sigma$. We denote by $\Z(\vv)$ the set of all $z \in \ZZ^{\overline 
\mu}$ such that $\alpha(z) \geq 0$ for all $\alpha \in \Omega$. This are the 
integral points in the fundamental domain of the system $\Omega$ seen as a root
system over the real vector space $\RR^\mu \subset \CC^\mu$, see
\cite{Hump-coxeter-book}*{\S 1.12}. Also,  for each $z \in \Z(\vv)$, we define
an equivalence relation $\sim_z$ on $\II(\overline 
\mu)$, by letting $(k,i) \sim_z (l,j)$ if and only if $l = k$ and 
$(\vv + z)_{k,i} = (\vv + z)_{k,j}$. Denote by $\II(\overline \mu, z)$
the set of all equivalence classes of this equivalence relation. Each 
equivalence class $I \in \II(\overline
\mu, z)$ is by definition a set of the form $\{(k,i), (k,i+1), \ldots, (k,j)\}$
for some $1 \leq i < j \leq \mu_k$. We will write $a^+(I)$ for $(k,i)$ and 
$a^-(I)$ for $(k,j)$, i.e.  the first and last elements of $I$, respectively,  
with respect to the lexicographic order. 
\begin{Lemma}
\label{L:seed-varia}
Let $\vv \in \CC^\mu$ be a seed, $\Psi = \Psi(\vv), \Omega = \Psi \cap 
\overline \Sigma$ and $W = W(\Psi)$.
\begin{enumerate}[(i)]
\item
\label{i:standard}
If $z \in \ZZ^\mu$ then $\vv + z$ is $\Omega$-standard if and only if $z \in 
\Z(\vv)$.

\item
\label{i:unique}
If $z, z' \in \Z(\vv)$ and $\vv + z = \sigma(\vv + z')$ for some $\sigma \in
S_{\mu}$, then $z = z'$.

\item
\label{i:plus-minus}
If $z \in \Z(\vv)$, then $z \pm \delta^{k,i} \in \Z(\vv)$ if and only if
$(k,i) = a^\pm(I)$ for some $I \in \II(\overline \mu, z)$.
\end{enumerate}
\end{Lemma}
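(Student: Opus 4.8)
The plan is to reduce all three parts to elementary statements about weakly decreasing integer sequences, using the combinatorics of the seed $\vv$. I would first record the ``run'' decomposition attached to $\vv$: the seed condition forces, in each block $1\le k\le r'$, a partition of $\{1,\dots,\mu_k\}$ into consecutive intervals on which $\vv_k$ is constant, the values on distinct intervals differing by non-integers; the elements of $\Omega$ are exactly those simple roots $x_{k,i}-x_{k,i+1}$ whose two indices lie in a common interval, so $W=W(\Psi)=W_{\vv}$ is the product of the symmetric groups of these intervals, and $\Z(\vv)$ is the set of $z\in\ZZ^{\overline\mu}$ that are weakly decreasing along every such interval. In particular $\alpha(\vv)=0$ for all $\alpha\in\Omega$, whence $\alpha(\vv+z)=\alpha(z)$ for all $\alpha\in\Omega$. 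Part~(\ref{i:standard}) is then immediate once ``$\Omega$-standard'' is read as membership in the closed fundamental chamber of $\Omega$ (the description of $\Z(\vv)$ recorded in the remark preceding the lemma): $\vv+z$ is $\Omega$-standard iff $\alpha(z)\ge 0$ for all $\alpha\in\Omega$, i.e. iff $z\in\Z(\vv)$. One direction also holds in the stricter sense that $\Psi_0(\vv+z)$ is a standard subsystem, since weak monotonicity of $z$ along each run makes the equal values of $\vv+z$ contiguous; it is the converse that needs the chamber formulation.

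For part~(\ref{i:unique}), write $\sigma=\prod_k\sigma[k]$ and compare the $(k,i)$-coordinates in $\vv+z=\sigma(\vv+z')$: one gets $\vv_{k,i}-\vv_{k,\sigma[k]^{-1}(i)}=z'_{k,\sigma[k]^{-1}(i)}-z_{k,i}\in\ZZ$, so the seed condition forces $\vv_{k,i}=\vv_{k,\sigma[k]^{-1}(i)}$. Hence $\sigma[k]$ preserves each $\vv$-run (so $\sigma\in W_{\vv}$), and moreover $z_{k,i}=z'_{k,\sigma[k]^{-1}(i)}$, i.e. $z=\sigma(z')$. Since $\sigma$ only permutes indices within runs while $z$ and $z'$ are each weakly decreasing along every run, $z$ and $z'$ are weakly decreasing rearrangements of one another on each run, hence equal; outside the first $r'$ blocks both vanish, so $z=z'$.

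For part~(\ref{i:plus-minus}) I would first unwind $\II(\overline\mu,z)$ for $z\in\Z(\vv)$: by integrality two indices are $\sim_z$-equivalent only if they lie in one $\vv$-run, where the relation becomes equality of $z$-values, and since $z$ decreases weakly along the run each class $I$ is a subinterval of a run on which $\vv+z$ is constant, $a^+(I)$ and $a^-(I)$ being its left and right endpoints. Adding $e_{k,i}$ to $z$ changes only the two inequalities at the edges of position $i$ inside its run $[p,q]$: the inequality $z_{k,i}+1\ge z_{k,i+1}$ (for $i<q$) survives automatically, while $z_{k,i-1}\ge z_{k,i}+1$ (for $i>p$) holds iff $z_{k,i-1}>z_{k,i}$; together with the boundary case $i=p$ this is precisely the condition that $(k,i)$ be the first element of its $\sim_z$-class, i.e. $(k,i)=a^+(I)$. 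The case of $z-e_{k,i}$ and $a^-(I)$ is symmetric, with $z_{k,i}-1\ge z_{k,i+1}$ now the binding inequality.

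I expect the subtlest point to be part~(\ref{i:standard}), where one must identify the relevant notion of $\Omega$-standardness with membership in the closed fundamental chamber of $\Omega$ — strictly stronger, for general points, than standardness of $\Psi_0(\vv+z)$ — after which the argument collapses to the single observation $\alpha(\vv+z)=\alpha(z)$; the most computation-heavy step is part~(\ref{i:plus-minus}), where one must carefully track the two nested families of intervals, the $\vv$-runs refined by constant $z$-value into the $\sim_z$-classes, and get the endpoint conditions exactly right.
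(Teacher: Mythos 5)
Your proof is correct and takes essentially the same route as the paper's: unwind the seed condition into the run decomposition so that $\Z(\vv)$ is the set of integer vectors weakly decreasing along each run, compare coordinates to get $z=\sigma(z')$ with $\sigma$ run-preserving in part (ii), and do the edge analysis of the $\sim_z$-classes in part (iii). The only (harmless) differences are that you sort the rearranged sequences directly where the paper invokes the uniqueness of the dominant representative in a $W$-orbit from \cite{Hump-coxeter-book}, you spell out both directions of the equivalence in (iii), and your explicit remark that part (i) needs ``$\Omega$-standard'' to be read as the closed-chamber condition $\alpha(\vv+z)\geq 0$ for $\alpha\in\Omega$ (rather than mere standardness of $\Psi_0(\vv+z)$) makes precise a point the paper's own proof simply asserts.
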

\begin{proof}
The definition of a seed implies that $\alpha \in \Psi$ if and only if 
$\alpha(\vv) = 0$. Now $\vv + z$ is $\Omega$-standard if and only if 
$\alpha(\vv + z) = \alpha(z) \geq 0$ for all $\alpha \in \Omega$. Hence part
(\ref{i:standard}) follows immediately from these definitions. 

Let now $\vv + z = \sigma(\vv + z')$; since $z \in \ZZ^{\overline \mu}$, we 
can assume that $\sigma \in S_{\overline \mu}$. Then $\vv - \sigma(\vv) = 
\sigma(z') - z$, and so $\vv_{k,i} - \sigma(\vv)_{k,i} = \vv_{k,i} - \vv_{
k, \sigma[k]^{-1}(i)} \in \ZZ$ for all $(k,i) \in \II(\overline \mu)$. By the 
definition of a seed this is possible if and only if $\sigma(\vv) = \vv$, so 
$z = \sigma(z')$. As mentioned above, $z, z' \in \Z(\vv)$ is equivalent to the
property that $\alpha(z), \alpha(z') \in \ZZ_{\geq 0}$ for all $\alpha \in 
\Omega$, and by \cite{Hump-coxeter-book}*{1.12 Theorem, part (a)} there is 
exactly one element in $W\cdot z$ with this property, so $z = z'$ and part 
(\ref{i:unique}) is proved. 

Finally it is easy to check that $z \in \Z(\vv)$ if and only if for each $I' 
= \{(k,i'), (k,i'+1), \ldots, (k,j')\} \in \II(\overline \mu, \vv)$ we have 
$z_{k,i'} \geq z_{k,i'+1} \geq \cdots \geq z_{k,j'}$. Thus if $z + 
\delta^{k,i} \in \Z(\vv)$ then either $i = 1$ or $z_{k,i-1} > z_{k,i} \geq 
z_{k,i+1}$. In either of the two cases there exists $I \in \II(\overline \mu, 
\vv + z)$ with $a^+(I) = (k,i)$. A similar argument shows that if $z - 
\delta^{k,i} \in \Z(\vv)$ then there must exist an $I$ such that $a^-(I) = 
(k,i)$ and part (\ref{i:plus-minus}) is proved.
\end{proof}

We are now ready to prove the following result that generalizes 
\cite{RZ-singular-characters}*{5.6 Theorem} and 
\cite{EMV-orthogonal}*{Theorem 10} to integral Galois algebras of type $A$. 
For the sake of comparison, we note that the sets $\Z(\vv)$ and $W^z$ in the
following theorem correspond respectively to the sets $\{\xi_j \mid j \in J\}$ 
and $X_j$ defined in \cite{EMV-orthogonal}, and to the sets $\mathcal{N}_\eta$ 
and $\mathsf{Shuffle}^\eta_{\epsilon(z)}$ defined in 
\cite{RZ-singular-characters}. 

\begin{Theorem}
\label{T:direct-sum}
Let $\vv \in \CC^\mu$ be a seed, let $\Psi = \Psi(\vv)$, let $\Omega = \Psi 
\cap \overline \Sigma$, and let $W = W(\Psi)$. Then
\begin{align*}
V(\Omega, T(\vv))
 &= \bigoplus_{z \in \Z(\vv)} \DD{}{}{}(\Omega, \vv + z).
\end{align*} 
In particular, the set $\{\DD{}{\sigma}{\vv + z} \mid z \in \Z(\vv), \sigma 
\in W^z\}$
is a basis of $V(\Omega, T(\vv))$ and $V(\Omega, T(\vv))$ is a Gelfand-Tsetlin
module over $U$ with respect to $\Gamma$.
\end{Theorem}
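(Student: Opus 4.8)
The plan is to read off the three assertions from results already in place: the Gelfand-Tsetlin property is Theorem \ref{T:module-structure}, the direct sum decomposition is the substantive point, and the basis statement then drops out of Proposition \ref{P:gamma-module-structure}. For the Gelfand-Tsetlin part, $\vv$ is $\Omega$-standard because $0 \in \Z(\vv)$ (Lemma \ref{L:seed-varia}(\ref{i:standard})), so it only remains to verify the hypothesis $\Phi_0(\vv+z) \subseteq \Psi$ of Theorem \ref{T:module-structure} for every $z \in \Z = \ZZ^{\overline \mu}$; this holds by the definition of a seed, since such a translation preserves the rows indexed by $k > r'$ while, for $k \le r'$, a coincidence $(\vv+z)_{k,i} = (\vv+z)_{k,j}$ forces $\vv_{k,i} - \vv_{k,j} \in \ZZ$, i.e. $x_{k,i} - x_{k,j} \in \Psi(\vv) = \Psi$. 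Hence $V(\Omega, T(\vv))$ is a Gelfand-Tsetlin $U$-module.

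Next I would show that the defining sum $V(\Omega, T(\vv)) = \sum_{z \in \ZZ^{\overline \mu}} \DD{}{}{}(\Omega, \vv+z)$ collapses onto the sub-sum indexed by $\Z(\vv)$. Since $\vv$ is a seed, $\Psi = \Psi(\vv) = \overline \Phi_0(\vv)$, so every element of $W = W(\Psi)$ fixes $\vv$; hence $\sigma(\vv+z) = \vv + \sigma(z)$ for $\sigma \in W$ and $z \in \ZZ^{\overline \mu}$, and $\sigma(z)$ again lies in $\ZZ^{\overline \mu}$ because $W$ only permutes coordinates within the rows $k \le r'$. As $\Omega$ is a base of $\Psi$, every $W$-orbit meets the closed dominant cone $\{x \mid \alpha(x) \ge 0 \text{ for all } \alpha \in \Omega\}$, so for a suitable $\sigma \in W$ one has $\sigma(z) \in \Z(\vv)$; then Proposition \ref{P:gamma-module-structure}(\ref{i:gamma-comparision}) gives $\DD{}{}{}(\Omega, \vv+z) = \DD{}{}{}(\Omega, \vv + \sigma(z))$ with $\sigma(z) \in \Z(\vv)$. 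Therefore $V(\Omega, T(\vv)) = \sum_{z \in \Z(\vv)} \DD{}{}{}(\Omega, \vv+z)$.

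For directness I would exploit the $\Gamma$-action. By Proposition \ref{P:gamma-module-structure}(\ref{i:gamma-basis}) and Theorem \ref{T:gamma-module}, for $z \in \Z(\vv)$ each $\DD{}{}{}(\Omega, \vv+z)$ is finite dimensional and every $\gamma \in \Gamma$ acts on it by a lower-triangular matrix with constant diagonal $\gamma(\vv+z)$, so $\DD{}{}{}(\Omega, \vv+z) \subseteq \Gamma^*[\ev_{\vv+z}]$. The characters $\ev_{\vv+z}$ for $z \in \Z(\vv)$ are pairwise distinct: if $\ev_{\vv+z} = \ev_{\vv+z'}$ then $\vv+z$ and $\vv+z'$ lie in the same $S_\mu$-orbit, since $\CC[X_\mu]^{S_\mu}$ separates $S_\mu$-orbits, and Lemma \ref{L:seed-varia}(\ref{i:unique}) then forces $z = z'$. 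Since the generalized eigenspaces $\Gamma^*[\chi]$ attached to distinct characters $\chi$ of $\Gamma$ are in direct sum, the sum over $\Z(\vv)$ is direct. Finally, for $z \in \Z(\vv)$ the point $\vv+z$ is $\Omega$-standard, so Proposition \ref{P:gamma-module-structure}(\ref{i:gamma-basis}) gives the basis $\{\DD{}{\sigma}{\vv+z} \mid \sigma \in W^z\}$ of $\DD{}{}{}(\Omega, \vv+z)$ (with $W^z = W^{\vv+z}$), and concatenating these over $z \in \Z(\vv)$ yields the asserted basis of $V(\Omega, T(\vv))$.

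I do not expect a serious obstacle: the argument is a synthesis of Proposition \ref{P:gamma-module-structure}, Lemma \ref{L:seed-varia} and Theorem \ref{T:module-structure}. The points that need genuine care are keeping every orbit manipulation inside the lattice $\ZZ^{\overline \mu}$, so that all translations involved are honest elements of $\Z$, and checking that the hypothesis of Theorem \ref{T:module-structure} is forced by the seed condition; the only new combinatorial input, Lemma \ref{L:seed-varia}(\ref{i:unique}), is precisely what makes the direct sum decomposition work and has already been established.
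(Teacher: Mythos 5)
Your argument is correct and follows essentially the same route as the paper: you collapse the defining sum onto $\Z(\vv)$ by sending each $z\in\ZZ^{\overline\mu}$ into the dominant cone with some $\sigma\in W$ (which fixes $\vv$) and invoking Proposition \ref{P:gamma-module-structure}(\ref{i:gamma-comparision}), you get directness from the pairwise distinct characters $\ev_{\vv+z}$ via orbit separation and Lemma \ref{L:seed-varia}(\ref{i:unique}), and you read off the basis from Proposition \ref{P:gamma-module-structure}(\ref{i:gamma-basis}), exactly as in the paper's proof. The only caveat is that your check of the hypothesis of Theorem \ref{T:module-structure} really gives $\overline{\Phi}_0(\vv+z)\subset\Psi$ only (the seed condition says nothing about coincidences in rows $k>r'$), but this is the same implicit reading the paper relies on, since for a standard Galois order of type $A$ the denominators of the generators involve only roots of $\overline\Phi$, so the regularity argument in that theorem goes through.
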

\begin{proof}
By definition $V(\Omega, T(\vv)) = \sum_{z \in \ZZ^{\overline \mu}} 
\DD{}{}{}(\Omega, \vv + z)$. Now by \cite{Hump-coxeter-book}*{1.12 Theorem} 
for each $z \in \ZZ^{\overline \mu}$ there exists $\sigma \in W$ such that 
$\sigma(z) \in \Z(\vv)$. Since $W$ is the stabilizer of $\vv$ it follows from 
part (\ref{i:gamma-comparision})
of Proposition \ref{P:gamma-module-structure} that $\DD{}{}{}(\Omega, \vv + 
z) = \DD{}{}{}(\Omega, \vv + \sigma(z))$. Hence $V(\Omega, T(\vv)) = 
\sum_{z \in \Z(\vv)} \DD{}{}{}(\Omega, \vv + z)$. We next show that the sum is 
direct. Notice that the space $\DD{}{}{}(\Omega, 
\vv + z)$ consists of eigenvectors of $\Gamma = \CC[X_\mu]^{S_\mu}$ with 
eigenvalue $\ev_{\vv + z}$. If there exist $z, z' \in \Z(v)$ such that 
$\gamma(\vv + z) = \gamma(\vv + z')$ for all $\gamma \in \Gamma$ then $\vv + z 
= \sigma(\vv + z')$ for some $\sigma \in S_\mu$ and, by 
Lemma \ref{L:seed-varia}(\ref{i:unique}), $z = z'$. Hence  the sum  is  
direct. The fact that the set in question is a basis follows from Proposition 
\ref{P:gamma-module-structure}(\ref{i:gamma-basis}).
\end{proof}

\paragraph
\about{Simplicity criterion}
In this paragraph $U$ denotes a standard Galois order of type $A$ over 
$(\CC[X_\mu] \# \CC^\mu)^{S_\mu}$ with generators $X^\pm_k$ for $1 \leq k 
\leq r'$. By definition
\begin{align*}
(X_{k}^\pm)^\dagger
	&=\sym_k \left(
 	\frac{f_k^\pm}{\prod_{j = 2}^{\mu_k} (x_{k,1} - x_{k,j})} t_{k,1}^{\mp 1} 
\right)
\end{align*}
for some $f_k \in \CC[X_\mu]^{H_k}$, where $H_k$ is the stabilizer of $e_{k,1}$
in $S_\mu$. Thus we have
\begin{align*}
(X_{k}^\pm)^\dagger
 &=\sum_{i=1}^{\mu_k} \left(
 \frac{f_{k,i}^\pm}{\prod_{j \neq i}^{\mu_k} (x_{k,i} - x_{k,j})} 
 t_{k,i}^{\mp 1}
\right)
\end{align*}
where $f_{k,1}^{\pm} = (1/\mu_k) f_k^\pm$ and $f_{k,j}^\pm = \sigma \cdot 
f_{k,1}^\pm$ for any $\sigma \in S_\mu$ such that $\sigma[k](1) = j$. For the 
rest of this paragraph $f_{k,i}^\pm$ will denote the polynomials appearing in 
the formulas displayed above. 

Fix a seed $\vv$, and let $\Psi = \Psi(\vv)$ and $W = W(\Psi)$. For each 
$I = \{(k,i), \ldots, (k,j)\} \subset \Sigma(\overline \mu)$ we denote by 
$S(I)$ the group of permutations of the set $I$. This is a parabolic subgroup 
of $S_\mu$ (usually called a \emph{Young subgroup}) with minimal generating 
set $\{s_t^{(k)} \mid i \leq t \leq j - 1\} \subset S_\mu$. Using this 
notation we have  $W = \prod_{I \in \II(\overline \mu, 0)} S(I)$ and 
$W_z = \prod_{I \in \II(\overline \mu, z)} S(I)$ for each $z \in \Z(\vv)$, 
which are Young subgroups of $S_\mu$.

The following lemma describes the action of $U$ on $V(\Omega, T(\vv))$ in 
terms of the basis given in Theorem \ref{T:direct-sum}. In order to 
state the lemma  we need to fix some notation which we will also use in the 
irreducibility criterion Theorem \ref{T:simplicity}. Given $z \in \Z(\vv)$ and 
$1 \leq k \leq r'$ we will denote by $\II_k(\overline \mu, z)$ the subset 
of $\II(\overline \mu, z)$ consisting of sets of the form $I = \{(k,i), 
\ldots, (k,j)\}$. We also write $\sigma^+(I) = (j \ j-1 \ \cdots \ i)^{(k)}$ 
and $\sigma^-(I) = (i \ i+1 \ \cdots \ j)^{(k)}$.
\begin{Lemma}
\label{L:formulas}
Let $\vv \in \CC^{\mu}$ be a seed and let $z \in \Z(\vv)$. For each $1 \leq k 
\leq r'$ we have 
\begin{align*}
(X^\pm_k)^\dagger
 &= \sum_{I \in \II_k(\overline \mu, z)}
 \frac{1}{|\widetilde W_{a^\mp(I)}|} \partial_{\sigma^\mp(I)}
 \left(
 \frac{f^\pm_{a^{\mp}(I)}}{\prod_{(k,j) \notin I} (x_{a^\mp(I)}-x_{k,j})}
 t_{a^{\mp}(I)}^{\mp 1}
 \right);,\\
\DD{}{\sigma}{\vv+z} \circ (X_k^\pm)^\dagger
 &= \sum_{I \in \II_k(\overline \mu, z, \sigma^\mp(I))}
 \sum_{\tau \leq \sigma \sigma^\mp(I)}
 \D{}{\tau, \sigma \sigma^\mp(I)}{\vv + z} \left( 
 \frac{f^\pm_{a^\mp(I)}}{\displaystyle
 \prod_{(k,j) \notin I} (x_{a^\mp(I)} - x_{k,j})} 
 \right) \DD{}{\tau}{\vv + z + \delta(\mp I)},
\end{align*}
where $\II_k(\overline \mu, z, \sigma^\mp(I))$ is the subset of 
$\II_k(\overline \mu, z)$ consisting of all $I$ such that $\ell(\sigma 
\sigma^\mp(I)) = \ell(\sigma) + \ell(\sigma^\mp(I))$ and $\delta(\mp I) = 
\delta^{a^\mp(I)}$. 
\end{Lemma}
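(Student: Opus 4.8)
The plan is to derive both displayed formulas from the structural results already assembled in the paper, specializing the general machinery to the type $A$ situation. I will work throughout with the seed $\vv$, the standard subsystem $\Psi=\Psi(\vv)$, the base $\Omega=\Psi\cap\overline\Sigma$, $W=W(\Psi)$, and $\widetilde W = W_{\vv+z}=W_z$; recall from the paragraph before the lemma that $W_z=\prod_{I\in\II(\overline\mu,z)}S(I)$.

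\medskip

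\noindent\textbf{First formula.} Starting from the displayed expression
\[
(X_k^\pm)^\dagger=\sum_{i=1}^{\mu_k}\frac{f^\pm_{k,i}}{\prod_{j\neq i}^{\mu_k}(x_{k,i}-x_{k,j})}\,t_{k,i}^{\mp1},
\]
I would group the index set $\{1,\dots,\mu_k\}$ according to the equivalence classes $I\in\II_k(\overline\mu,z)$: since elements within a class $I=\{(k,i),\dots,(k,j)\}$ correspond to mutually $W_z$-conjugate data, the summands over $i\in I$ form a single $S(I)$-orbit. The orbit-sum of the leading term $\dfrac{f^\pm_{a^\mp(I)}}{\prod_{(k,j)\notin I}(x_{a^\mp(I)}-x_{k,j})}\,t^{\mp1}_{a^\mp(I)}$ under $S(I)$ is then rewritten as a divided-difference image using the ``symmetrizer'' identity Lemma \ref{L:dd-varia}(\ref{i:symmetrizing}) applied to the parabolic $S(I)\subset W_z$: the longest element of the relevant coset representatives is exactly the cycle $\sigma^\mp(I)$, and the factor $\Delta(\Psi)^\cdot$ appearing there accounts precisely for the denominators $\prod_{(l,t)\in I,\,t\neq}\,(x_{a^\mp(I)}-x_{l,t})$ that were not written in the reduced summand; the constant $\tfrac1{|\widetilde W_{a^\mp(I)}|}$ is the index $|S(I)|/|W_z|$-type normalization coming out of that lemma. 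This is essentially an instance of Proposition \ref{P:form}(\ref{i:dd-form}) specialized to the single translation direction $\pm e_{k,1}$ and the relative invariant $d_\chi=\Delta_k$, so I would cite it rather than redo the symmetrization by hand. The main bookkeeping point is to check that $a^\mp(I)$ is indeed $\widetilde\Omega$-standard for the subsystem $\Psi_0(\vv+z\pm\delta)$ — i.e.\ that translating by $\pm\delta^{a^\mp(I)}$ lands back in $\Z(\vv)$ — which is exactly Lemma \ref{L:seed-varia}(\ref{i:plus-minus}).

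\medskip

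\noindent\textbf{Second formula.} Here I would apply the general module-structure computation inside the proof of Theorem \ref{T:module-structure} with $X=(X_k^\pm)^\dagger$, $v'=\vv+z$ (already a seed, hence $\Omega$-standard), and with the roles of $\widetilde\omega_0^z$, $F_z$ played by $\sigma^\mp(I)$ and $F_I=\dfrac{f^\pm_{a^\mp(I)}}{\prod_{(k,j)\notin I}(x_{a^\mp(I)}-x_{k,j})}$. The proof of that theorem gives
\[
\DD{}{\sigma}{\vv+z}\circ X=\sum_{I}\frac{1}{|\widetilde W_{a^\mp(I)}|}\sum_{\nu\in W}\D{}{\nu,\sigma\sigma^\mp(I)}{\vv+z}(F_I)\,\DD{}{\nu}{\vv+z+\delta(\mp I)},
\]
where the sum over $I$ is restricted to those with $\ell(\sigma\sigma^\mp(I))=\ell(\sigma)+\ell(\sigma^\mp(I))$ — this is the set $\II_k(\overline\mu,z,\sigma^\mp(I))$ — and where the regularity of $F_I$ at $\vv+z$ needed to invoke Proposition \ref{P:leibniz-rule} follows from the hypothesis $\Phi_0(\vv+z)\subset\Psi$ (a seed always satisfies this), exactly as in the ``$F_z$ is regular at $v'$'' step. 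The remaining discrepancy between this and the stated formula is the replacement of $\sum_{\nu\in W}\D{}{\nu,\sigma\sigma^\mp(I)}{\vv+z}$ by $\sum_{\tau\le\sigma\sigma^\mp(I)}\D{}{\tau,\sigma\sigma^\mp(I)}{\vv+z}$ and the disappearance of the $\tfrac1{|\widetilde W_{a^\mp(I)}|}$: the first is harmless because $\D{}{\tau,\rho}{}=\sum_\rho c^\rho_{\tau,\nu}\D{}{\nu}{}$ vanishes unless $\tau\le\rho$ in Bruhat order (the generalized Littlewood–Richardson coefficients vanish otherwise), so the two sums agree; the normalization constant is absorbed by observing that in type $A$ the relevant coset $W_z$-orbit of $a^\mp(I)$ has size $|\widetilde W_{a^\mp(I)}|$ so it is already accounted for once we pass from $(X_k^\pm)^\dagger$ written over $i\in\{1,\dots,\mu_k\}$ to the $I$-indexed form of the first displayed formula.

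\medskip

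\noindent\textbf{Expected obstacle.} The genuinely fiddly step is the precise matching of the combinatorial data: identifying the longest coset representative $\widetilde\omega_0^z$ from Proposition \ref{P:form} with the explicit cycle $\sigma^\mp(I)$, verifying that $\sigma\sigma^\mp(I)$ is the element whose Postnikov–Stanley operator appears, and keeping the several normalization constants ($|W|$, $|W_y|$, $|\widetilde W_z|$, $\mu_k$, $|S(I)|$) consistent between the general lemmas and the type $A$ reindexing. I expect the root-system and length bookkeeping — in particular checking $\ell(\sigma^\mp(I))=|I|-1$ and that the denominator $\prod_{(k,j)\notin I}(x_{a^\mp(I)}-x_{k,j})$ is exactly $d_\chi^{\,z}$ for the translation in question, while $\Delta(\widetilde\Psi)^z$ supplies the within-class factors — to be the part requiring the most care; everything else is a direct substitution into Theorems \ref{T:gamma-module} and \ref{T:module-structure} and Propositions \ref{P:leibniz-rule}, \ref{P:gamma-module-structure}, \ref{P:form}.
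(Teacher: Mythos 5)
Your route is essentially the paper's own: identify $\{\pm e_{a^\pm(I)} \mid I \in \II_k(\overline\mu,z)\}$ as the $\widetilde\Omega$-standard representatives of the support of $(X_k^\pm)^\dagger$ modulo $\widetilde W = W_z$, recognize $\sigma^\pm(I)$ as the minimal-length representatives of the cosets $\widetilde\omega_0\widetilde W_{(k,\cdot)}$ (equivalently the elements $\widetilde\omega_0^{\,y}$ of Proposition \ref{P:form}), apply Proposition \ref{P:form}(\ref{i:dd-form}) for the first display, and quote the computation in the proof of Theorem \ref{T:module-structure} for the second; your observation that $\sum_{\nu\in W}$ may be replaced by $\sum_{\tau\le\sigma\sigma^\mp(I)}$ because $c^{\rho}_{\tau,\nu}=0$ unless $\tau\le\rho$ is correct and is exactly why the two forms agree.

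Two caveats. The one place you go beyond the paper --- explaining the disappearance of the prefactor $\tfrac1{|\widetilde W_{a^\mp(I)}|}$ in the second display by saying it is ``absorbed'' in passing to the $I$-indexed form --- is not an argument: the computation of Theorem \ref{T:module-structure} is linear in the operator $(X_k^\pm)^\dagger$, so whatever constant multiplies $\partial_{\sigma^\mp(I)}\bigl(F_I\,t_{a^\mp(I)}^{\mp1}\bigr)$ in the first display is carried verbatim into the composition with $\DD{}{\sigma}{\vv+z}$; the two displays can only be reconciled by redoing the normalization count once directly (note the proof of Proposition \ref{P:form} itself wavers between $|W_y|$ and $|W^y|$, and a careful orbit count in Lemma \ref{L:dd-varia}(\ref{i:symmetrizing}) shows the $\widetilde W$-orbit sum of the $t$-term already equals $\partial_{\sigma^\mp(I)}\bigl(F_I\,t_{a^\mp(I)}^{\mp1}\bigr)$ with no extra constant). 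So replace the absorption remark by this direct recount, which fixes the constants in both displays simultaneously; nothing downstream is affected, since only the nonvanishing of the coefficients is used in Theorem \ref{T:simplicity}. Second, a mis-citation: the bookkeeping fact needed for the first display is that $\pm e_{a^\pm(I)}$ is $\widetilde\Omega$-standard, i.e.\ its stabilizer in $\widetilde W$ is the standard parabolic $\widetilde W_{a^\pm(I)}$ --- a direct check, as in the paper --- whereas Lemma \ref{L:seed-varia}(\ref{i:plus-minus}) concerns when $z\pm\delta^{k,i}$ remains in $\Z(\vv)$ and is used later, in the simplicity argument, not here.
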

\begin{proof}
Set $\widetilde \Psi = \Psi_0(\vv + z), \widetilde \Omega = \Omega \cap 
\widetilde \Psi$, and $\widetilde W = W_z$. It is immediate that 
$\{\pm e_{a^\pm(I)} \mid I \in \II_k(\overline \mu, z)\}$ is a set of 
$\widetilde \Omega$-standard representatives of $\{\pm e_{k,1}, \ldots \pm 
e_{k,\mu_k}\}/\widetilde W$. Let $\widetilde \omega_0$ be the longest word in 
$\widetilde W$, and let $\widetilde W_{(k,t)}$ be the stabilizer of $e_{k,t}$ 
in $\widetilde W$. Then $\sigma^+(I)$ is the shortest element of the left 
coclass $\widetilde \omega_0 W_{(k,i)}$, while $\sigma^-(I)$ is the
shortest element of the left coclass $\widetilde \omega_0 W_{(k,j)}$. Thus 
using part (\ref{i:dd-form}) of Proposition \ref{P:form} we can rewrite 
$(X_k^\pm)^\dagger$ as in the statement, and the formula for 
$\DD{}{\sigma}{\vv + z} \circ (X^\pm_k)^\dagger$ is identical to the one 
obtained in Theorem \ref{T:module-structure}.
\end{proof}

Note that for each $z \in \Z(\vv)$ the longest word in $W_z$ is $\prod_{I \in 
\II(\overline \mu, z)} \omega_0(I)$, where $\omega_0(I)$ is the longest word
in $S(I)$. We will say that $z' \in \Z(\vv)$ \emph{refines} $z$ if the 
following holds: for each $J \in \II (\overline \mu, z')$ there exists $I \in 
\II(\overline \mu, z)$ such that $J \subset I$. For instance this always 
happens if $z = \vv$. If $z'$ refines $z$ then the longest element in 
$W_z^{z'}$ is equal to 
\begin{align*}
\omega_0(z,z')
	&= \prod_{I \in \II(\overline \mu, z)} \omega_0(I) 
		\prod_{J \in \II(\overline \mu, z')} \sigma^+(J).
\end{align*}
Now $\sigma \in W$ lies in $W^z$ if and only if for each $I = \{(k,i), (k,i+1),
\ldots, (k,j)\}$ in $\II(\overline \mu, z)$ we have $\sigma[k](i) < 
\sigma[k](i+1) < \cdots <\sigma[k](j)$. Hence if $z'$ refines $z$
then the longest word of $W^{z'}$ lies in $W^z$.

This observation will play a crucial role in the following 
simplicity criterion which generalizes the simplicity criterion for modules
over orthogonal Gelfand-Tsetlin algebras \cite{EMV-orthogonal}*{Theorem 11}
to modules over standard Galois orders. Note that the non-integrality 
condition in that statement is equivalent to the condition $f_{k,i}^{\pm}(\vv 
+ z) \neq 0$ below when $U$ is an orthogonal Gelfand-Tsetlin algebra.

\begin{Theorem}
\label{T:simplicity}
Let $\vv$ be a seed. If $f_{k,i}^{\pm}(\vv + z) \neq 0$ for all $z \in \Z(\vv)$ and 
all $(k,i) \in \II(\overline \mu)$ then $V(\Omega, T(\vv))$ is an 
irreducible $U$-module.
\end{Theorem}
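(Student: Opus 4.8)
The plan is to show that any nonzero $U$-submodule $M \subset V(\Omega, T(\vv))$ must be the whole module. The strategy has three parts: (1) show that $M$ contains one of the "rank-one" pieces $\DD{}{}{}(\Omega, \vv + z)$ entirely; (2) show that from a single such piece one can move to all neighbouring pieces $\DD{}{}{}(\Omega, \vv + z \pm \delta^{a^\pm(I)})$; (3) conclude by connectivity of $\Z(\vv)$ under the moves $z \mapsto z \pm \delta^{a^\pm(I)}$. Throughout I will use Theorem \ref{T:direct-sum} to identify $V(\Omega, T(\vv))$ with $\bigoplus_{z \in \Z(\vv)} \DD{}{}{}(\Omega, \vv + z)$ and the fact that $\Gamma$ acts on the $z$-summand with generalized eigenvalue $\ev_{\vv+z}$, so each summand is a generalized weight space and $M$ decomposes as $M = \bigoplus_{z} (M \cap \DD{}{}{}(\Omega, \vv + z))$.

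For step (1): pick $z$ with $M \cap \DD{}{}{}(\Omega, \vv+z) \neq 0$ and take a nonzero $x = \sum_{\sigma \in W^z} a_\sigma \DD{}{\sigma}{\vv+z}$ in it. By Proposition \ref{P:gamma-module-structure}(\ref{i:gamma-cyclic}) it suffices to produce, after acting by elements of $\Gamma$, an element with $a_{\omega_0^z} \neq 0$; since $\Gamma$ acts on each summand by a lower-triangular matrix (Theorem \ref{T:gamma-module}) whose off-diagonal part is controlled by the operators $\D{}{\sigma,\tau}{\vv+z}$, and since by Lemma \ref{L:speyer-lemma} the pairing $(a,b) \mapsto \D{}{\omega_0^z}{\vv+z}(ab)$ on $(\Lambda/I_W)^{W_z}$ is nondegenerate, one can find $\gamma \in \Gamma$ with $\gamma \cdot x$ having nonzero $\DD{}{\omega_0^z}{\vv+z}$-component — essentially the argument already used in the proof of Proposition \ref{P:gamma-module-structure}(b). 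Hence $\Gamma \cdot x = \DD{}{}{}(\Omega, \vv+z)$, so $M \supset \DD{}{}{}(\Omega, \vv+z)$.

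For step (2), which I expect to be the crux: suppose $\DD{}{}{}(\Omega, \vv+z) \subset M$ and let $I \in \II_k(\overline\mu, z)$ so that by Lemma \ref{L:seed-varia}(\ref{i:plus-minus}) the point $z' = z + \delta(\mp I)$ lies in $\Z(\vv)$. Apply $(X_k^\pm)^\dagger$ (equivalently, act by $X_k^\pm$) to a suitable element of $\DD{}{}{}(\Omega, \vv+z) \subset M$ and use the explicit formula from Lemma \ref{L:formulas}. The output is a sum over $I'$ of terms landing in various $\DD{}{}{}(\Omega, \vv + z + \delta(\mp I'))$; projecting onto the $z'$-summand (legitimate since $M$ is a sum of weight spaces and, by Lemma \ref{L:seed-varia}(\ref{i:unique}), the $z'+$shifts are distinct), one gets $\sum_{\tau \leq \sigma\sigma^\mp(I)} \D{}{\tau,\sigma\sigma^\mp(I)}{\vv+z}\!\left(\tfrac{f^\pm_{a^\mp(I)}}{\prod_{(k,j)\notin I}(x_{a^\mp(I)}-x_{k,j})}\right)\DD{}{\tau}{\vv+z'}$. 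Choosing $\sigma = \omega_0^{z'}\sigma^\mp(I)^{-1}$ — which by the refinement discussion preceding the theorem can be arranged to be the minimal-length representative making $\ell(\sigma\sigma^\mp(I)) = \ell(\sigma)+\ell(\sigma^\mp(I))$ with $\sigma\sigma^\mp(I) = \omega_0^{z'}$ — the leading term is $\D{}{\omega_0^{z'},\omega_0^{z'}}{\vv+z}(\cdots)\DD{}{\omega_0^{z'}}{\vv+z'}$, and the scalar $\D{}{\omega_0^{z'},\omega_0^{z'}}{\vv+z}$ is the constant term, which evaluates the rational function $\tfrac{f^\pm_{a^\mp(I)}}{\prod(x_{a^\mp(I)}-x_{k,j})}$ at $\vv+z$: this is nonzero precisely because the hypothesis $f^\pm_{k,i}(\vv+z) \neq 0$ guarantees a nonzero numerator, while the denominator is nonvanishing at $\vv+z$ since $(k,j) \notin I$ means $x_{a^\mp(I)} - x_{k,j}$ is not a root of $\Psi(\vv+z)$. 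So the image in the $z'$-summand has nonzero $\DD{}{\omega_0^{z'}}{\vv+z'}$-coefficient, and by step (1) applied at $z'$ we get $\DD{}{}{}(\Omega, \vv+z') \subset M$.

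Finally, step (3): $\Z(\vv)$ is connected under the adjacency $z \leftrightarrow z \pm \delta^{a^\pm(I)}$ — indeed any $z \in \Z(\vv)$ can be reduced to $0$ by repeatedly subtracting such $\delta$'s (decreasing $\sum_{k,i} z_{k,i}$ at each step while staying in $\Z(\vv)$, using the weakly-decreasing description of $\Z(\vv)$ from the proof of Lemma \ref{L:seed-varia}), and conversely. Hence starting from the $z$ found in step (1) we reach every summand, so $M = V(\Omega, T(\vv))$ and the module is irreducible. The main obstacle is the bookkeeping in step (2): verifying that the length-additivity condition $\ell(\sigma\sigma^\mp(I)) = \ell(\sigma)+\ell(\sigma^\mp(I))$ can be met with $\sigma\sigma^\mp(I) = \omega_0^{z'}$, which is exactly where the refinement observation (that $z'$ refines $z$ and hence $\omega_0^{z'} \in W^z$ factors through $\sigma^\mp(I)$) is needed, and checking that the nonvanishing of the relevant Littlewood–Richardson-type coefficient reduces cleanly to the constant-term evaluation $f^\pm_{a^\mp(I)}(\vv+z) \neq 0$.
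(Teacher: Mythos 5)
Your step (1) contains the fatal gap. By Theorem \ref{T:gamma-module} the action of $\Gamma$ on a summand $\DD{}{}{}(\Omega,\vv+z)$ is triangular in the direction of \emph{decreasing} length: the coefficient of the top vector $\DD{}{\omega_0^z}{\vv+z}$ in $\gamma\cdot x$ is $\gamma(\vv+z)\,a_{\omega_0^z}$, so if $a_{\omega_0^z}=0$ it remains zero for every element of $\Gamma$. Lemma \ref{L:speyer-lemma} and Proposition \ref{P:gamma-module-structure}(\ref{i:gamma-cyclic}) only let you move \emph{down} from the top (indeed that proposition says $\Gamma\cdot x$ is the whole summand if and only if $a_{\omega_0^z}\neq 0$); no element of $\Gamma$ can create a top component. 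Hence from a nonzero $x\in M\cap\DD{}{}{}(\Omega,\vv+z)$ the most the $\Gamma$-action yields is the \emph{bottom} vector $\DD{}{e}{\vv+z}$ (via powers of $\m=\ker\ev_{\vv+z}$ and part (\ref{i:gamma-ev-kernel}) of the proposition) — this is exactly the paper's Step 1 — and not the whole summand. Consequently your induction never gets a valid starting point: reaching a top vector requires the operators $X_k^\pm$ (and the hypothesis on $f^\pm_{k,i}$). The paper obtains its first full summand differently: it first propagates the bottom vectors $\DD{}{e}{\vv+z}$ to \emph{all} $z\in\Z(\vv)$ (its Step 2), and then starts from the constant shift $z^{(0)}$, for which $W_{z^{(0)}}=W$, so that the summand there is one-dimensional and coincides with its bottom vector.

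The crux of your step (2) is also only asserted. You need, for an \emph{arbitrary} legal move $z\to z'=z+\delta(\mp I)$, an element $\sigma\in W^z$ with $\sigma\sigma^\mp(I)=\omega_0^{z'}$ and $\ell(\sigma\sigma^\mp(I))=\ell(\sigma)+\ell(\sigma^\mp(I))$, and you justify this by ``the refinement discussion''; but refinement fails for general single moves — e.g.\ with one row of length $4$, $\vv=0$, $z=(1,1,0,0)$ and $z'=(1,0,0,0)$ have block partitions $\{1,2\},\{3,4\}$ and $\{1\},\{2,3,4\}$, neither refining the other — so the observation preceding the theorem does not deliver your claim. The claim may in fact hold for every move, but it needs its own combinatorial proof, and this is precisely the bookkeeping the paper avoids by propagating full summands only along special paths: upward from $z^{(0)}$ through lexicographically chosen steps to non-critical $z$ (Step 3, where $W_z$ is trivial and the top element is $\omega_0$), and then downward from a dominating non-critical point along a path on which refinement \emph{does} hold, so that the longest element of $W^{y^{(s+1)}}$ lies in $W^{y^{(s)}}$ and the length-additivity is available (Step 4). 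Your remaining ingredients (the coefficient of the top vector being the constant-term evaluation of $f^\pm_{a^\mp(I)}/\prod_{(k,j)\notin I}(x_{a^\mp(I)}-x_{k,j})$ at $\vv+z$, and its nonvanishing) agree with Lemma \ref{L:formulas} and the paper's computation; also, in your step (3), ``reduce to $0$ by subtracting $\delta$'s'' is wrong when $z$ has negative entries, though connectivity of $\Z(\vv)$ itself is not the issue.
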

\begin{proof}
Set $V = V(\Omega, T(\vv))$. We will show that any nonzero submodule $N 
\subset V$ is in fact equal to $V$. For each $z \in \Z(\vv)$ denote by $\pi^z: V \to \DD{}{}{} (\Omega, \vv + z)$ 
the projection to the direct summand. 
We proceed in four steps.

\noindent{\it Step 1.} If $t \in V$ and  $z \in \Z(\vv)$ are such that $\pi^z(t) \neq 0$, then 
$\DD{}{e}{\vv +z}$ is in the module $ U t$ generated by $t$. 

\noindent{\it Proof of Step 1.} 
First notice that Lemma \ref{L:sub-gt} implies $\pi^{z}(t) 
\in U t$. Now let $\m = \ker \DD{}{e}{\vv+z} \subset \Gamma$. By Theorem 
\ref{T:gamma-module} there exists a minimal $l \in \NN$ such that $\m^l 
\pi^{z}(t) = 0$, and part \ref{i:gamma-ev-kernel} of Proposition 
\ref{P:gamma-module-structure} implies that $\m^{l-1} \pi^{z}(t) = \CC 
\DD{}{e}{\vv+z'} \subset U t$. 

\noindent{\it Step 2.} 
$\DD{}{e}{\vv + z} \in N$ for all $z \in \Z(\vv)$.

\noindent{\it Proof of Step 2.}  
Step 1 implies that there exists $v' = \vv + z'$ with $z' \in \Z(\vv)$ such 
that $\DD{}{e}{v'} \in N$. To prove Step 2,  we will show that if 
$\DD{}{e}{\vv + z} \in N$ then $\DD{}{e}{\vv + z \pm \delta^{k,i}} \in N$ for 
any $(k,i) \in \II(\overline \mu)$ such that $v\pm \delta^{k,i} \in \Z(\vv)$. 
Indeed, by Lemma \ref{L:formulas} and the definition of the action of a 
co-rational Galois order on $\Gamma^*$,
\begin{align*}
\pi^{z + \delta(\mp I)} \left(X_k^\pm \cdot\DD{}{e}{\vv + z} \right)
	&= \pi^{z + \delta(\mp I)} \left(
		\DD{}{e}{\vv + z} \circ (X_k^\pm)^\dagger 
	\right) \\
 	&= \sum_{\tau \leq \sigma^\mp(I)} \D{}{\tau, \sigma^\mp(I)}{\vv + z} 
 		\left( 
 			\frac{f_{a^\mp(I)}^\pm}{\displaystyle \prod_{(k,j) \notin I} 
 				(x_{a^\mp(I)} - x_{k,j})} 
 		\right) \DD{}{\tau}{\vv + z + \delta(\mp I)}
\end{align*}
so the coefficient of $\DD{}{\sigma^\mp(I)}{\vv + \delta(\mp I)}$ is: 
\begin{align*}
\frac{f_{a^\mp(I)}^\pm(\vv + z)}{\displaystyle
	\prod_{(k,j) \notin I} (\vv_{a^\mp(I)} + z_{a^\mp(I)} - \vv_{k,j} - z_{k,j})}.
\end{align*}
Notice that this coefficient  is well defined, since by the definition of $\II(\overline 
\mu, z)$ the denominator is nonzero. Also the hypothesis on $\vv$ implies that  the 
denominator nonzero, so $\pi^{z + \delta(\mp I)} (\DD{}{e}{\vv + z} \circ 
X^\pm_k) \neq 0$, and Step 1 implies that $\DD{}{e}{\vv + z + \delta(\mp I)} 
\in N$ for all $I \in \II(\overline \mu, z)$.

\noindent{\it Step 3.}  
$\DD{}{}{}(\Omega, \vv + z) \subset N$ for all non-critical $z$, i.e. for all 
$z \in \Z(\vv)$ such that for each $I =\{(k,i), \ldots, (k,j)\} \in 
\II(\overline \mu, \vv)$ we have $z_{k,i} > z_{k,i+1} > \cdots > z_{k,j}$. 

\noindent{\it Proof of Step 3.}  
Notice that for non-critical $z$, the stabilizer of $z$ is the trivial 
subgroup of $W$ so the longest element in $W^z$ is $\omega_0$, the 
longest element of $W$. To prove Step 3,  we build a sequence $z^{(0)}, 
z^{(1)}, \ldots$ of elements in $\Z(\vv)$ as follows. First set $z^{(0)} \in 
\ZZ^{\overline \mu}$ such that $z^{(0)}_{k,i} = \min\{z_{l,j} \mid (l,j) \in 
\II(\overline \mu)\}$. Now suppose $z^{(s)}$ has been defined, and consider 
the set $L_s = \{(l,j) \in \II(\overline \mu) \mid z^{(s)}_{l,j} < z_{l,j}\}$. 
If $L_z = \emptyset$ then $z^{(s)} = z$ and we set $z^{(s+1)} = z$, otherwise 
we take $(k_s, i_s)$ to be the minimal element in $L_s$ with respect to the 
lexicographic order and set $z^{(s+1)} = z^{(s)} + \delta^{k_s, i_s}$. Clearly 
$z^{(s)} = z$ for $s \gg 0$. 

We prove by induction that $\DD{}{\omega_0^{(s)}}{\vv + z^{(s)}} \in N$, where 
$\omega_0^{(s)}$ is the longest element in $W^{z^{(s)}}$.  If $s = 0$ then by 
definition $z^{(0)}$ is a seed, and hence $\omega_0^{(0)} = e$. Since we 
already know that $\DD{}{e}{\vv + z^{(0)}} \in N$ the base case 
of the induction follows. Now take $s \geq 0$ and set $y = z^{(s)}, y' = 
z^{(s+1)}$ and $(k,i) = (k_s, i_s)$ so $y' = y + \delta^{k,i}$. The definition
of $(k,i)$ implies that there exists $j \leq \mu_k$ such that $I = \{(k,i),
\ldots, (k,j)\} \in \II(\overline \mu, y)$, and also that $\{(k,i)\} \in 
\II(\overline \mu, y')$. It follows from the characterization of the longest 
word in $W_z$ that $\omega_0^{(s+1)} = \omega_0^{(s)} \sigma^+(I)$. A 
simple computation shows that $\ell(\omega_0^{(s+1)}) = \ell(\omega_0^{(s)})
+ \ell(\sigma^+(I))$ so using Lemma \ref{L:formulas} as in the previous step
and the fact that $\DD{}{\sigma, \sigma}{v} = \ev_v$ for all $\sigma \in W$, 
we see that the coefficient of $\DD{}{\omega_0^{(s+1)}}{\vv + y'}$ in 
$\DD{}{\omega_0^{(s)}}{\vv + y} \circ (X_k^-)^\dagger$ is
\begin{align*}
\frac{f_{a^\mp(I)}^\pm(\vv + y)}{\displaystyle
	\prod_{(k,j) \notin I} (\vv_{a^\mp(I)} + y_{a^\mp(I)} 
	      - \vv_{k,j} - y_{k,j})}.
\end{align*}
and the hypothesis implies that this expression is nonzero. Hence by part 
(\ref{i:gamma-cyclic}) of Proposition \ref{P:gamma-module-structure} 
$\DD{}{}{}(\Omega, \vv + y') \subset N$, and in particular 
$\DD{}{\omega_0^{(s+1)}}{\vv + y'} \in N$.

\noindent{\it Step 4.}  
$\DD{}{}{}(\Omega, \vv + z) \subset N$ for arbitrary $z$. In particular, $N = 
V(\Omega, T(\vv))$.

\noindent{\it Proof of Step 4.}  
Fix  $z \in \Z(\vv)$. Then there exists  non-critical $y^{(0)} \in 
\Z(\vv)$ such that $y^{(0)}_{k,i} \geq z_{k,i}$ for all $(k,i)
\in \II(\overline \mu)$. For each $s \geq 0$ set $y^{(s+1)}$ to be 
$y^{(s)} - \delta^{(k_s, i_s)}$, where $(k_s, i_s)$ is the maximal element
in the set $\{(k,i) \in \II(\overline \mu) \mid y^{(s)}_{k,i} > z_{k,i}\}$
with respect to the lexicographic order; and if this set is empty then we set 
$y^{(s+1)} = y^{(s)} = z$. We claim that $\DD{}{}{}(\Omega, \vv + y^{(s)}) 
\subset N$, and prove this by induction on $s$. Since $y^{(0)}$ is non-critical
the base case of the induction follows from Step 3. Now assume that the 
inclusion holds for some $s \geq 0$, and set $(k,i) = (k_s, i_s)$. Since 
$\II(\overline \mu, y^{(s+1)})$ is a partition of the set $\II(\overline \mu)$,
there exists $I \in (\overline \mu, y^{(s+1)})$ such that $(k,i) \in I$. 
If $I = \{(k,i)\}$ then by construction $\II(\overline \mu, y^{(s)}) = 
\II(\overline \mu, y^{(s+1)})$; otherwise we have $\II(\overline \mu, y^{(s)}) = \II(\overline \mu, y^{(s+1)}) \cup \{I',
(k,i)\}\setminus \{I\}$ for $I' = I \setminus \{(k,i)\}$. From this it follows that for each $I' \in 
\II(\overline \mu, y^{(s)})$ there exists $J \in \II(\overline \mu, y^{(s)})$
such that $I' \subset J$. Thus $y^{(s)}$ refines $y^{(s+1)}$, and this implies 
that the longest element of $W^{y^{(s+1)}}$ lies in $W^{y^{(s)}}$. If we 
denote this element by $\omega_0^{(s+1)}$ then using Lemma \ref{L:formulas} 
and the hypothesis just as in the previous step we see that the coefficient of 
$\DD{}{\omega_0^{(s+1)}}{\vv + y^{(s+1)}}$ in $\pi^{\vv + 
y^{(s+1)}}\left(\DD{}{\omega_0^{(s+1)}}{\vv + y^{(s)}} \circ (X_k^{+})^\dagger 
\right)$ is not zero. Once again part 
(\ref{i:gamma-cyclic}) of Proposition \ref{P:gamma-module-structure} implies 
$\DD{}{}{}(\Omega, \vv + y^{(s+1)}) \subset N$, and since $y^{(s')} = z$ for 
$s' \gg 0$, Step 4 is proven.
\end{proof}

\begin{bibdiv}
\begin{biblist}


\bib{BGG-cohomology}{article}{
 author={Bern\v ste\u \i n, I. N.},
 author={Gel\cprime fand, I. M.},
 author={Gel\cprime fand, S. I.},
 title={Schubert cells, and the cohomology of the spaces $G/P$},
 language={Russian},
 journal={Uspehi Mat. Nauk},
 volume={28},
 date={1973},
 number={3(171)},
 pages={3--26},
}

\bib{Dem-schubert}{article}{
 author={Demazure, Michel},
 title={D\'esingularisation des vari\'et\'es de Schubert g\'en\'eralis\'ees},
 language={French},
 note={Collection of articles dedicated to Henri Cartan on the occasion of his 70th birthday, I},
 journal={Ann. Sci. \'Ecole Norm. Sup. (4)},
 volume={7},
 date={1974},
 pages={53--88},
}

\bib{DFO-GT-modules-original}{article}{
 author={Drozd, Yu. A.},
 author={Ovsienko, S. A.},
 author={Futorny, V. M.},
 title={On Gel\cprime fand-Zetlin modules},
 booktitle={Proceedings of the Winter School on Geometry and Physics (Srn\'\i , 1990)},
 journal={Rend. Circ. Mat. Palermo (2) Suppl.},
 number={26},
 date={1991},
 pages={143--147},
}

\bib{DFO-GT-modules}{article}{
 author={Drozd, Yu. A.},
 author={Futorny, V. M.},
 author={Ovsienko, S. A.},
 title={Harish-Chandra subalgebras and Gel$\prime $fand-Zetlin modules},
 conference={ title={Finite-dimensional algebras and related topics}, address={Ottawa, ON}, date={1992}, },
 book={ series={NATO Adv. Sci. Inst. Ser. C Math. Phys. Sci.}, volume={424}, publisher={Kluwer Acad. Publ., Dordrecht}, },
 date={1994},
 pages={79--93},
}

\bib{EMV-orthogonal}{article}{
 author={Early, Nick},
 author={Mazorchuk, Volodymir},
 author={Vyshniakova, Elizabetha},
 title={Canonical Gelfand-Zeitlin modules over orthogonal Gelfand-Zeitlin algebras},
 note={preprint, available online at \url {https://arxiv.org/abs/1709.01553}},
}

\bib{FGR-generic-irreducible}{article}{
 author={Futorny, Vyacheslav},
 author={Grantcharov, Dimitar},
 author={Ramirez, Luis Enrique},
 title={Irreducible Generic-Tsetlin Modules of ${\germ {gl}}(n)$},
 journal={Symmetry, Integrability and Geometry: Methods and Applications},
 volume={11},
 date={2015},
 note={13 pages},
}

\bib{FGR-1-singular}{article}{
 author={Futorny, Vyacheslav},
 author={Grantcharov, Dimitar},
 author={Ramirez, Luis Enrique},
 title={Singular Gelfand-Tsetlin modules of ${\germ {gl}}(n)$},
 journal={Adv. Math.},
 volume={290},
 date={2016},
 pages={453--482},
}

\bib{FGR-2-index}{article}{
  author={Futorny, Vyacheslav},
  author={Grantcharov, Dimitar},
  author={Ramirez, Luis Enrique},
  title={New Singular Gelfand-Tsetlin modules of ${\germ {gl}}(n)$ of index $2$},
   journal={Comm. Math. Phys.},
   volume={355},
   date={2017},
   number={3},
   pages={1209--1241},
}



\bib{FO-galois-orders}{article}{
 author={Futorny, Vyacheslav},
 author={Ovsienko, Serge},
 title={Galois orders in skew monoid rings},
 journal={J. Algebra},
 volume={324},
 date={2010},
 number={4},
 pages={598--630},
}

\bib{FO-fibers-gt}{article}{
 author={Futorny, Vyacheslav},
 author={Ovsienko, Serge},
 title={Fibers of characters in Gelfand-Tsetlin categories},
 journal={Trans. Amer. Math. Soc.},
 volume={366},
 date={2014},
 number={8},
 pages={4173--4208},
}



\bib{GT} {article}{
author = {Gelfand, Israel},
author = {Tsetlin, Michael},
title={ Finite-dimensional representations of the group of unimodular matrices},
journal={Doklady Akad. Nauk SSSR (N.s.)},
volume={71}, 
date ={1950},
pages= {825--828},
}

\bib{GS} {article}{
author = {Guillemin, Victor},
author = {Sternberg, Shlomo},
title={The Gelfand-Tsetlin system and quantization of the complex 
flag manifolds},
journal={J. Funct. Anal.},
volume={52}, 
date ={1983},
pages= {106--128},
}

\bib{Hart-rational-galois}{article}{
 author={Hartwig, Johnas},
 title={Principal Galois Orders and Gelfand-Zeitlin modules},
 note={preprint, available online at \url {http://arxiv.org/abs/1710.04186v1}},
}

\bib{Hiller-coxeter-book}{book}{
 author={Hiller, Howard},
 title={Geometry of Coxeter groups},
 series={Research Notes in Mathematics},
 volume={54},
 publisher={Pitman (Advanced Publishing Program), Boston, Mass.-London},
 date={1982},
 pages={iv+213},
}

\bib{Hump-coxeter-book}{book}{
 author={Humphreys, James E.},
 title={Reflection groups and Coxeter groups},
 series={Cambridge Studies in Advanced Mathematics},
 volume={29},
 publisher={Cambridge University Press, Cambridge},
 date={1990},
 pages={xii+204},
}

\bib{KW-1}{book}{
 author={Kostant, Bertrant},
 author={Wallach, Nolan},
 title={Gelfand-Zeitlin theory from the perspective of
		classical mechanics I},
 series={Studies in Lie Theory Dedicated
		to A. Joseph on his Sixtieth Birthday},
 publisher={Progress in Mathematics},
 volume={243},
 date={2006},
 pages={319--364},
}

\bib{KW-2}{book}{
 author={Kostant, Bertrant},
 author={Wallach, Nolan},
 title={Gelfand-Zeitlin theory from the perspective of
classical mechanics II},
 series={The Unity of Mathematics In
Honor of the Ninetieth Birthday of I. M. Gelfand},
publisher={Progress in
Mathematics},
 volume={244},
 date={2006},
 pages={387--420},
}

\bib{Lam-modules-book}{book}{
 author={Lam, T. Y.},
 title={Lectures on modules and rings},
 series={Graduate Texts in Mathematics},
 volume={189},
 publisher={Springer-Verlag, New York},
 date={1999},
 pages={xxiv+557},
}

\bib{Maz1}{article}{
 author={Mazorchuk, Volodymyr},
 title={Tableaux realization of generalized Verma modules},
 journal={Can. J. Math.},
 volume={50},
 date={1998},
 pages={816--828},
}

\bib{Maz-orthogonal-GT-alg}{article}{
 author={Mazorchuk, Volodymyr},
 title={Orthogonal Gelfand-Zetlin algebras. I},
 journal={Beitr\"age Algebra Geom.},
 volume={40},
 date={1999},
 number={2},
 pages={399--415},
 issn={0138-4821},
}

\bib{Maz2}{article}{
 author={Mazorchuk, Volodymyr},
 title={On categories of Gelfand-Zetlin modules},
 book={conference={Noncommutative Structures in Mathematics and Physics},
 publisher={ Kluwer Acad. Publ, Dordrecht}, },
 date={2001},
 pages={ 299--307},
}

\bib{m:gtsb}{book}{
 author={Molev, Alexander},
 title={Gelfand-Tsetlin bases for classical Lie algebras},
 series={Handbook of Algebra},
 volume={4},
 publisher={Elsevier},
 date={1999},
 pages={109--170},
}

\bib{Ovs-finiteness}{article}{
 author={Ovsienko, Serge},
 title={Finiteness statements for Gelfand-Zetlin modules},
 conference={ title={Third International Algebraic Conference in the Ukraine (Ukrainian)}, },
 book={ publisher={Nats\=\i onal. Akad. Nauk Ukra\"\i ni, \=Inst. Mat., Kiev}, },
 date={2002},
 pages={323--338},
}

\bib{RZ-singular-characters}{article}{
 author={Ramírez, L.E.},
 author={Zadunaisky, P.},
 title={Singular Gelfand-Tsetlin modules with arbitrary characters},
 note={Preprint available at \url 
 {https://arxiv.org/abs/1705.10731}},
 year={2017},
}

\bib{PS-chains-bruhat}{article}{
 author={Postnikov, Alexander},
 author={Stanley, Richard P.},
 title={Chains in the Bruhat order},
 journal={J. Algebraic Combin.},
 volume={29},
 date={2009},
 number={2},
 pages={133--174},
}

\bib{MathOver}{misc}{ 
 author={Speyer, David}, 
 title={A duality result for Coxeter groups}, 
 year={2017},
 note={Version: 2017-11-23, https://mathoverflow.net/q/286744}, 
 organization={MathOverflow},
}

\bib{Vis-geometric-1-singular-GT}{article}{
 author={Vishnyakova, Elizaveta},
 title={A Geometric approach to $1$-singular Gelfand-Tsetlin 
 $\mathfrak {gl}_n$-modules},
 journal={Differential Geometry and its Applications},
 volume={56},
 date={2018},
 pages={155--160},
}

\bib{Vis-geometric-singular-GT}{article}{
 author={Vishnyakova, Elizaveta},
 title={Geometric approach to $p$-singular Gelfand-Tsetlin 
 $\mathfrak {gl}_n$-modules},
 note={Preprint, available at \url {https://arxiv.org/abs/1705.05793}},
 year={2017},
}

\bib{Zad-1-sing}{article}{
 author={Zadunaisky, Pablo},
 title={A new way to construct 1-singular Gelfand-Tsetlin modules},
 journal={Algebra Discrete Math.},
 volume={23},
 date={2017},
 number={1},
 pages={180--193},
}

\bib{Zh}{book}{
 author={Zhelobenko, Dmitri},
 title={Compact Lie groups and their representations},
 series={Transl. Math. Monographs},
 volume={40},
 publisher={AMS},
 date={1974},
}

\end{biblist}
\end{bibdiv}

\end{document}